\def\@settitle{\begin{center}%
  \baselineskip14\p@\relax
  \bfseries
  \uppercasenonmath\@title
  \@title
  \ifx\@subtitle\@empty\else
     \\[1ex]\uppercasenonmath\@subtitle
     \footnotesize\mdseries\@subtitle
  \fi
  \end{center}%
}
\def\subtitle#1{\gdef\@subtitle{#1}}
\def\@subtitle{}
\newtheorem{thm}{Theorem}
\newtheorem{lem}{Lemma}
\newtheorem{defn}{Definition}
\newtheorem{prob}{Problem}
\newtheorem{rem}{Remark}
\DeclareMathOperator{\conv}{conv}
\DeclareMathOperator{\area}{area}
\DeclareMathOperator{\cl}{cl}
\DeclareMathOperator{\arcsinh}{arcsinh}
\newcommand{\HH}{\mathbb{H}}
\newcommand{\M}{\mathbb{M}}
\newcommand{\F}{\mathcal{F}}
\newcommand{\Eu}{\mathbb{E}}
\newcommand{\Sph}{\mathbb{S}}
\renewcommand{\S}{\mathcal{S}}
\title{On optimal $\lambda$-separable packings in the plane
\footnote{Keywords and phrases: Euclidean, spherical and hyperbolic plane, $\lambda$-separable packing, density, tightness, contact number, refined Moln\'ar decomposition.  \newline \hspace*{.35cm} 2010 Mathematics Subject Classification: 52A55, 52A40, 52C15.}}
\author{K\'{a}roly Bezdek\thanks{Partially supported by a Natural Sciences and 
Engineering Research Council of Canada Discovery Grant.} and Zsolt L\'angi\thanks{Partially supported by the National Research, Development and Innovation Office, NKFI, K-119670 and BME Water Sciences \& Disaster Prevention TKP2020 Institution Excellence Subprogram, grant no. TKP2020 BME-IKA-VIZ.}
}
\date{}
\begin{document}

\maketitle

\begin{abstract}
Let $\mathcal{P}$ be a packing of circular disks of radius $\rho>0$ in the Euclidean, spherical, or hyperbolic plane. Let $0\leq\lambda\leq\rho$. We say that $\mathcal{P}$ is a {\it $\lambda$-separable packing of circular disks of radius $\rho$} if the family $\mathcal{P'}$ of disks concentric to the disks of $\mathcal{P}$ having radius $\lambda$ form a totally separable packing, i.e.,  any two disks of $\mathcal{P'}$ can be separated by a line which is disjoint from the interior of every disk of $\mathcal{F'}$. This notion bridges packings of circular disks of radius $\rho$ (with $\lambda=0$) and totally separable packings of circular disks of radius $\rho$ (with $\lambda=\rho$). In this note we extend several theorems on the {\it density, tightness, and contact numbers} of disk packings and totally separable disk packings to $\lambda$-separable packings of circular disks of radius $\rho$ in the {\it Euclidean, spherical, and hyperbolic plane}. In particular, our upper bounds (resp., lower bounds) for the density (resp., tightness) of $\lambda$-separable packings of unit disks in the Euclidean plane are sharp for all $0\leq\lambda\leq 1$ with the extremal values achieved by $\lambda$-separable lattice packings of unit disks. On the other hand, the bounds of similar results in the spherical and hyperbolic planes are not sharp for all $0\leq\lambda\leq\rho$ although they do not seem to be far from the relevant optimal bounds either. The proofs use local analytic and elementary geometry and are based on the so-called {\it refined Moln\'ar decomposition}, which is obtained from the underlying Delaunay decomposition and as such might be of independent interest.
\end{abstract}

\section{Introduction}

Let $\M\in \{ \Eu^2, \HH^2, \Sph^2 \}$ denote the Euclidean, hyperbolic, or spherical plane, i.e., one of the simply connected complete $2$-dimensional Riemannian manifolds of constant sectional curvature. Since simply connected complete space forms, the sectional curvature of which have the same sign are similar, we may assume without loss of generality that the sectional curvature of $\M$ is $0, -1,$ or $1$.

Recall that finding the densest packing of congruent (circular) disks in $\M$ is a classical problem of discrete geometry that has been investigated in great details with the basic results published in L. Fejes T\'oth's classical book \cite{Fe}. The concept of totally separable packings is more recent. It was introduced by G. Fejes T\'oth and L. Fejes T\'oth in \cite{FeFe} and attracted significant attention. According to their definition a packing $\mathcal{P}$ of unit disks in the Euclidean plane is said to be totally separable if any two unit disks of $\mathcal{P}$ can be separated by a line which is disjoint from the interior of every unit disk of $\mathcal{F}$. This motivates the following definition that bridges packings and totally separable packings as follows.

\begin{defn}
Let $0\leq \lambda\leq \rho$. Let $\mathcal{P}$ be a packing of (circular) disks of radius $\rho$ in $\M\in \{ \Eu^2, \HH^2, \Sph^2 \}$. We say that $\mathcal{P}$ is a {\rm $\lambda$-separable packing of disks of radius $\rho$} if the family $\mathcal{P'}$ of disks concentric to the disks of $\mathcal{P}$ having radius $\lambda$ form a totally separable packing in $\M$, i.e.,  any two disks of $\mathcal{P'}$ can be separated by a line in $\M$ which is disjoint from the interior of every disk of $\mathcal{F'}$. 
\end{defn}

\begin{rem}
Clearly, a $0$-separable packing of disks of radius $\rho$ in $\M\in \{ \Eu^2, \HH^2, \Sph^2 \}$ is simply a packing of disks of radius $\rho$ in $\M$. On the other hand, any $\rho$-separable packing of disks of radius $\rho$ in $\M\in \{ \Eu^2, \HH^2, \Sph^2 \}$ is a totally separable packing of disks of radius $\rho$ in $\M$.
\end{rem} 

In this note we are going to investigate $\lambda$-separable packings of disks of radius $\rho$ in $\M\in \{ \Eu^2, \HH^2, \Sph^2 \}$ from the point of view of density, tightness, and contact numbers. Before stating our main theorems, we introduce some notation.

Let
\begin{equation}\label{eq:x1s}
x_1^s(y) := \frac{1}{2} \arcsin \frac{\cos \lambda \sin^2 y}{\sqrt{\sin^2 y - \sin^2 \lambda}}, \hbox{ if } 0 < \lambda < \frac{\pi}{4}, \arcsin \tan \lambda < y < \frac{\pi}{2},
\end{equation}
\begin{equation}\label{eq:x2s}
x_2^s(y) := \frac{\pi}{2}-\frac{1}{2} \arcsin \frac{\cos \lambda \sin^2 y}{\sqrt{\sin^2 y - \sin^2 \lambda}}, \hbox{ if } 0 < \lambda < \frac{\pi}{4}, \arcsin \tan \lambda < y < \frac{\pi}{2},
\end{equation}
\begin{equation}\label{eq:xh}
x^h(y) := \frac{1}{2} \arcsinh \frac{\cosh \lambda \sinh^2 y}{\sqrt{\sinh^2 y - \sinh^2 \lambda}}, \hbox{ if } 0 < \lambda < y,
\end{equation}
\begin{equation}\label{eq:xeu}
x^e(y) := \frac{y^2}{2\sqrt{y^2-\lambda^2}}, \hbox{ if } 0 < \lambda < y.
\end{equation}

Furthermore, for any $0 < \lambda < \frac{\pi}{4}, \arcsin \tan \lambda < y < \frac{\pi}{2}$ and $i=1,2$ let $T_i^s(y)$ denote the spherical isosceles triangle of edge lengths $2y, 2x_i^s(y), 2x_i^s(y)$, and let $R_i^s(y)$ denote the circumradius of $T_i^s(y)$. Similarly, for any $0 < \lambda < y$, let $T^h(y)$ denote the hyperbolic isosceles triangle with edge lengths $2y, 2x^h(y), 2x^h(y)$, and let $R^h(y)$ denote the circumradius of $T^h(y)$. We denote by $T^e(y)$ the Euclidean isosceles triangle with edge lengths $2y, 2x^e(y), 2x^e(y)$ and its circumradius by $R^e(y)$. In addition, we denote the regular spherical, hyperbolic, Euclidean triangle of edge length $2\rho$ by $T_{reg}^s(\rho)$,$T_{reg}^h(\rho)$ and $T_{reg}^e(\rho)$, respectively, and the circumradii of these triangles by $R_{reg}^s(\rho)$, $R_{reg}^h(\rho)$, and $R_{reg}^e(\rho)$, respectively.

\begin{rem}\label{rem:monotonicityofx}
It is an elementary computation to check that $x_1^s$ as a function of $y$ is strictly decreasing over the closed interval $S_1:= [ \arcsin \tan \lambda, \arcsin (\sqrt{2} \sin \lambda) ]$ and strictly increasing over $S_2:= [\arcsin (\sqrt{2} \sin \lambda), \pi/2]$, with $x_1^s(\arcsin \tan \lambda) = x_1^s (\pi/2) = \frac{\pi}{4}$ and $x_1^s(\arcsin (\sqrt{2} \sin \lambda)) = \lambda$. Since $x_2^s(y)=\frac{\pi}{2}-x_1^s(y)$, similar statements hold for $x_2^s$. For $k=1,2$ and $j=1,2$, we denote the inverse of the restriction of $x_j^s$ to $S_k$ by $x_j^s|_{S_k}^{-1}$. We remark that $\arcsin \tan \lambda < \arcsin (\sqrt{2} \sin \lambda) < \frac{\pi}{2}$ holds for all $\lambda \in (0,\pi/4)$. Furthermore, $x^h(y)$ is a strictly decreasing function of $y$ on $H_1:=(\lambda, \arcsinh (\sqrt{2} \sinh \lambda)]$, and strictly increasing on $H_2:=[\arcsinh (\sqrt{2} \sinh \lambda),+\infty)$, with $x^h(y) \to +\infty$ as $y \to \lambda^+$ or $y \to +\infty$. We denote the inverse of $x^h$ on the two intervals by $x^h|_{H_1}^{-1}$ and $x^h|_{H_2}^{-1}$, respectively.
\end{rem}

For any $0 < \lambda \leq \arcsin \frac{3}{5}$, let 
\begin{equation}\label{eq:yss}
y_s^s(\lambda):=\arcsin \sqrt{\frac{3+5\sin^2 \lambda - \sqrt{9-34 \sin^2 \lambda + 25 \sin^4 \lambda}}{8}} \hbox{ and}
\end{equation}
\begin{equation}\label{eq:ybs}
y_b^s(\lambda):=\arcsin \sqrt{\frac{3+5\sin^2 \lambda + \sqrt{9-34 \sin^2 \lambda + 25 \sin^4 \lambda}}{8}},
\end{equation}
and for any $\lambda > 0$, let
\begin{equation}\label{eq:ysh}
y_s^h(\lambda):=\arcsinh \sqrt{ \frac{5 \sinh^2 \lambda - 3 + \sqrt{25 \sinh^4 \lambda + 34 \sinh^2 \lambda + 9}}{8} }.
\end{equation}

Finally, for any $0 < \lambda < \frac{\pi}{2}$, we set
\begin{equation}\label{eq:crsols}
y_{min}^s(\lambda) := \arcsin\left( \frac{1}{6} \sqrt{ 6A^{1/3} - \frac{216 \left( -\frac{10}{9}L^4 +\frac{2}{3}L^2 \right) }{A^{1/3}} + 60L^2} \right),
\end{equation}
where $L := \sin \lambda$ and $A:=100 L^6-36 L^4+12 \sqrt{ -375 L^{12}+750 L^{10}-471 L^8+96 L^6}$.
Similarly, for any $0 < \lambda$, we set
\begin{equation}\label{eq:crsolh}
y_{min}^h(\lambda) := \arcsinh \sqrt{ \frac{5L^2}{3} + \frac{2}{3} \sqrt{10L^4+6L^2} \cos\left( \frac{1}{3} \arccos \left( \frac{(25 L^2+9) L}{4 \sqrt{2} (5L^2+3)^{3/2}} \right) -\frac{2\pi}{3} \right)  },
\end{equation}
where $L:=\sinh \lambda$.

An elementary computation shows that the above expressions exist on the required intervals. 


\begin{defn}
Let $B = \{ x \in \Eu^2 : ||x|| \leq 1 \}$ denote the unit disk centered at the origin of $\Eu^2$, where $||\cdot||$ stands for the Euclidean norm of the corresponding vector (resp., point) in $\Eu^2$. Let $\delta_{\lambda}(B)$ (resp., $\delta_{\lambda}^*(B)$) denote the largest density of $\lambda$-separable packings of unit disks (resp., $\lambda$-separable lattice packings of unit disks) in $\Eu^2$, i.e., the largest fraction of $\Eu^2$ covered by $\lambda$-separable packings of unit disks (resp., $\lambda$-separable lattice packings of unit disks) in $\Eu^2$.
\end{defn}
It was proved in \cite{FeFe} that $\delta_{1}(B) = \delta_{1}^*(B) =\frac{\pi}{4}$. On the other hand, it is well known \cite{Fe} that $\delta_{0}(B) = \delta_{0}^*(B) =\frac{\pi}{\sqrt{12}}$. The following theorem extends these results to $\lambda$-separable packings as follows.
\begin{thm}\label{thm:densityEu}
Let $B = \{ x \in \Eu^2 : ||x|| \leq 1 \}$. Then
\[
\delta_{\lambda}(B) = \delta_{\lambda}^*(B) =
\left\{ \begin{array}{l}
\frac{\pi}{\sqrt{12}}, \hbox{ if } 0 \leq \lambda \leq \frac{\sqrt{3}}{2},\\
\frac{\pi}{4 \lambda}, \hbox{ if } \frac{\sqrt{3}}{2} \leq \lambda \leq 1.\\
\end{array}
\right.
\]
Furthermore, for $0 \leq \lambda \leq \frac{\sqrt{3}}{2}$ and $\frac{\sqrt{3}}{2} \leq \lambda \leq 1$ the lattice packing $\mathcal{F}$ of unit disks whose Delaunay triangles  are $T_{reg}^e(1)$ and $T^e(\sqrt{2-2\sqrt{1-\lambda^2}})$, respectively, is a $\lambda$-separable packing with density $\delta_{\lambda}(B)$ (cf. Figure~\ref{fig:density_Eu}).
\end{thm}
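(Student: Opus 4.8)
\emph{Lower bound and the identity $\delta_{\lambda}(B)=\delta_{\lambda}^*(B)$.} For $0\le\lambda\le\frac{\sqrt3}{2}$ I would take $\mathcal F$ to be the triangular lattice packing of unit disks with minimal distance $2$, whose Delaunay triangles are $T_{reg}^e(1)$ and whose density is $\frac{\pi}{\sqrt{12}}$. For each of the three directions of shortest lattice vectors the parallel lattice lines in that direction are at mutual distance $\sqrt3$, so the line midway between two consecutive such lines is at distance exactly $\frac{\sqrt3}{2}\ge\lambda$ from every lattice point; picking two non‑parallel such families yields a grid of parallelogram cells, one lattice point each, which no concentric $\lambda$‑disk meets, so $\mathcal F$ is $\lambda$‑separable. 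For $\frac{\sqrt3}{2}\le\lambda\le1$ I would take $\mathcal F$ to be the rhombic lattice packing generated by two vectors of length $2$ enclosing the angle $\theta=\arcsin\lambda\in[\frac\pi3,\frac\pi2]$. A direct computation identifies its Delaunay triangles — the two halves into which the short diagonal cuts each rhombus — with $T^e(y_0)$ for $y_0=\sqrt{2-2\sqrt{1-\lambda^2}}=2\sin\frac\theta2$, shows $x^e(y_0)=1$ (so the two equal sides have length exactly $2$), and gives density $\frac{\pi}{4\sin\theta}=\frac{\pi}{4\lambda}$. The mid‑row lines of this lattice in its two generating directions lie at distance $\sin\theta=\lambda$ from the lattice, hence are tangent to the $\lambda$‑disks and miss all their interiors, and again cut the plane into one‑point fundamental parallelograms; so $\mathcal F$ is $\lambda$‑separable. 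This shows $\delta_{\lambda}^*(B)$ is at least the asserted value throughout $[0,1]$, and — once the matching upper bound is in hand — forces $\delta_{\lambda}(B)=\delta_{\lambda}^*(B)$ and exhibits $\mathcal F$ as extremal.

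\emph{Upper bound.} For $0\le\lambda\le\frac{\sqrt3}{2}$ nothing beyond the classical theorem of L. Fejes T\'oth is needed, since a $\lambda$‑separable packing of unit disks is in particular a packing of unit disks, hence of density at most $\frac{\pi}{\sqrt{12}}$. The substance is the range $\frac{\sqrt3}{2}\le\lambda\le1$. Given a $\lambda$‑separable packing $\mathcal P$ of unit disks with centre set $C$, I would pass to the Delaunay decomposition of the plane with respect to $C$ (refining any cocircular cell arbitrarily to a triangle) and then to the \emph{refined Moln\'ar decomposition}: in each Delaunay triangle $T$ one drops the perpendiculars from the circumcentre to the three sides, with the standard Moln\'ar adjustment when $T$ is obtuse and the circumcentre lies outside $T$, so that $T$ is cut into cells, each a right triangle meeting exactly one disk of $\mathcal P$ in a single circular sector; each cell is governed by the half‑length $y\ge1$ of the Delaunay edge it meets and by the circumradius $R$ of $T$. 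The role of $\lambda$‑separability is to constrain these data: the line separating the two $\lambda$‑disks centred at the endpoints of a Delaunay edge $e$ of half‑length $y$ is at distance $\ge\lambda$ from every centre, in particular from the apex of each Delaunay triangle on $e$; since for an isosceles triangle with base $2y$ such a separating line (through the midpoint of the base, missing the apex $\lambda$‑disk) exists exactly when the two equal sides have length $\ge 2x^e(y)=\frac{y^2}{\sqrt{y^2-\lambda^2}}$, no Delaunay triangle on $e$ can be flatter over $e$ than $T^e(y)$; for short edges this reads as a lower bound $R\ge R^e(y)$ on the circumradius. One then establishes the local density inequality: the density of $\mathcal P$ inside any refined Moln\'ar cell is at most the density of the corresponding cell of $T^e(y_0)$, equivalently the cell‑density as an explicit function of $(y,R)$, decreasing in $R$, attains on the admissible region its maximum at the data of $T^e(y_0)$, where it equals $\frac{\pi}{4\lambda}$. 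Since the refined Moln\'ar cells tile the plane, averaging the local bound yields $\delta_{\lambda}(B)\le\frac{\pi}{4\lambda}$; note that at $\lambda=\frac{\sqrt3}{2}$ one has $T^e(y_0)=T_{reg}^e(1)$, $R^e(y_0)=R_{reg}^e(1)$ and $\frac{\pi}{4\lambda}=\frac{\pi}{\sqrt{12}}$, so the two regimes agree.

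\emph{Main obstacle.} The technical core is the local density estimate for the refined Moln\'ar cells. A naive per–Delaunay‑triangle bound does not suffice: for thin obtuse Delaunay triangles, whose empty circumcircle is large, the overhang of the apex disk can make the density inside the triangle itself exceed $\frac{\pi}{4\lambda}$, so such triangles must be re‑decomposed — this is exactly the purpose of Moln\'ar's construction via the circumcentre and the obtuse‑case adjustment — and the overhang charged against fatter neighbouring cells; carrying this out while simultaneously incorporating the separating‑line information, uniformly over all edge lengths, is the work. A second delicate point is to show that the separating‑line condition over an edge of half‑length $y$ is captured \emph{sharply} by the comparison with $T^e(y)$: one must prove that the worst placement of the third disk is the isosceles one and that a separating line through the midpoint of the edge is optimal among all separating lines for the two edge‑disks, so that the admissible region for the cell parameters is precisely the region on which the cell‑density is $\le\frac{\pi}{4\lambda}$, with equality attained exactly at the cells of $T^e(y_0)$, i.e. of the rhombic extremal lattice.
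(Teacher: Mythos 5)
Your lower-bound constructions are correct and coincide with the paper's extremal configurations: the triangular lattice for $0\le\lambda\le\frac{\sqrt3}{2}$, and for $\frac{\sqrt3}{2}\le\lambda\le 1$ the rhombic lattice whose Delaunay triangles are $T^e\bigl(\sqrt{2-2\sqrt{1-\lambda^2}}\bigr)$ with legs of length $2$; your verification via the mid-row lines of two generating directions is a valid certificate of $\lambda$-separability, and your computation $x^e(y_0)=1$ and density $\frac{\pi}{4\lambda}$ checks out. The reduction of the range $0\le\lambda\le\frac{\sqrt3}{2}$ to the classical density theorem is also fine.

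The gap is in the upper bound for $\frac{\sqrt3}{2}\le\lambda\le 1$, which is the entire content of that half of the theorem: you write ``one then establishes the local density inequality'' and then, in your last paragraph, correctly observe that establishing it ``is the work'' --- so the decisive step is asserted, not proved. The route you sketch (cell density as a function of $(y,R)$, decreasing in $R$, maximized over an admissible region cut out by $R\ge R^e(y)$, plus a reduction to the isosceles case) is neither carried out nor needed. The paper's mechanism is much more direct and you miss it: by Lemma~\ref{lem:trivial}, total separability of the three $\lambda$-disks at the vertices of a Delaunay triangle $T$ containing its circumcenter implies that for at least two vertices the line through the midpoints of the two adjacent sides is itself a separating line avoiding all three disks; that midline is at distance $h/2$ from the opposite vertex, so at least two heights of $T$ are at least $2\lambda$. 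Together with all sides having length at least $2$ this gives $\area(T)\ge\max\{\sqrt3,\,2\lambda\}$, while the three unit disks cover exactly $\pi/2$ of $T$ (half the angle sum), so the density in $T$ is at most $\min\bigl\{\frac{\pi}{\sqrt{12}},\frac{\pi}{4\lambda}\bigr\}$ with no two-parameter optimization at all. The thin obtuse triangles you worry about are not handled by ``charging overhang to neighbours'' but by the refined Moln\'ar decomposition: type~1 cells are triangles containing their circumcenter, to which the above applies, and type~2 cells are disposed of by the separate monotonicity statement Lemma~\ref{lem:BL23nonSph}. Without the two-heights observation, or an actual proof of your monotonicity claim over the admissible $(y,R)$-region, your argument does not close.
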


\begin{figure}[ht]
  \begin{center}
  \includegraphics[width=0.5\textwidth]{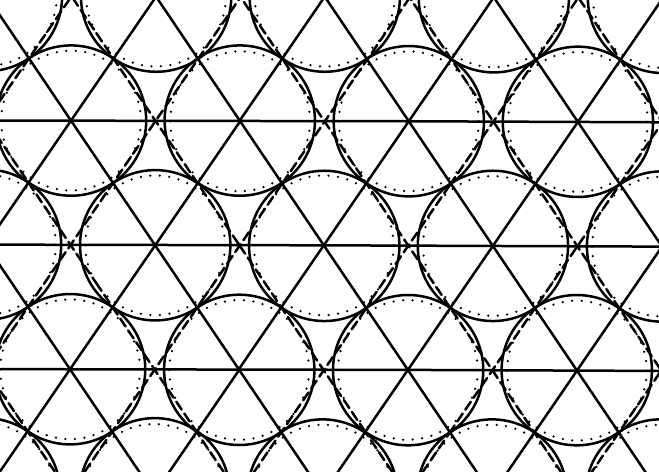} 
 \caption{A densest $\lambda$-separable packing of unit disks in $\Eu^2$ with $\lambda=0.93$. The unit disks and the sides of the Delaunay triangles are denoted by solid lines. The concentric disks of radius $\lambda$ and the lines separating them are drawn with dotted and dashed lines, respectively. The bases of the Delaunay triangles are horizontal, and their length is greater than $2$. The legs of the Delaunay triangles are of length $2$.}
\label{fig:density_Eu}
\end{center}
\end{figure}

\begin{defn}
Let $\gamma_{\lambda}(B)$ (resp., $\gamma_{\lambda}^*(B)$) denote the smallest $r>0$ such that there exists a $\lambda$-separable unit disk packing $B+X$ (resp., a $\lambda$-separable lattice packing $B+\Lambda$ of unit disks, where $\Lambda$ denotes a $2$-dimensional lattice in $\Eu^2$) satisfying $ \Eu^2 =rB+X$ (resp., $ \Eu^2 =rB+\Lambda$). We call $\gamma_{\lambda}(B)$ (resp., $\gamma_{\lambda}^*(B)$) the {\rm $\lambda$-separable tightness constant} (resp., {\rm $\lambda$-separable lattice tightness constant}) of $B$. In particular, if $\mathcal{P}:=B+X$ is a packing of unit disks in $\Eu^2$, then the smallest $r>0$ satisfying $ \Eu^2 =rB+X$ is called the {\rm tightness} of $\mathcal{P}$.
\end{defn}
The notion of tightness constant goes back to Rogers (see \cite{Zo}) and it is often called the simultaneous packing and covering constant. Closely related notions were introduced by Ryskov \cite{Ry} as well as L. Fejes T\'oth \cite{Fe78}. It was noted in \cite{Fe78} that $\gamma_{0}(B) = \gamma_{0}^*(B) =\frac{2}{\sqrt{3}}$. The following theorem is an extension of that to $\lambda$-separable packings.

\begin{thm}\label{thm:tightnessEu}
Let $B = \{ x \in \Eu^2 : ||x|| \leq 1 \}$. Then
\[
\gamma_{\lambda}(B) = \gamma_{\lambda}^*(B) =
\left\{ \begin{array}{l}
\frac{2}{\sqrt{3}}, \hbox{ if } 0 \leq \lambda \leq \frac{\sqrt{3}}{2},\\
\frac{\sqrt{2-2\sqrt{1-\lambda^2}}}{\lambda}, \hbox{ if } \frac{\sqrt{3}}{2} \leq \lambda \leq \frac{2\sqrt{2}}{3},\\
\frac{3 \sqrt{3} \lambda}{4}, \hbox{ if } \frac{2\sqrt{2}}{3} \leq \lambda \leq 1.\\
\end{array}
\right.
\]
Furthermore, the tightness of a $\lambda$-separable packing $\mathcal{F}$ of unit disks is $\gamma_{\lambda}(B)$ if and only if the Delaunay triangles defined by $\mathcal{F}$ tile $\Eu^2$ and are all congruent to $T_{reg}^e(1)$, $T^e \left(\sqrt{2-2\sqrt{1-\lambda^2}} \right)$ and $T^e\left( \sqrt{\frac{3}{2}} \lambda \right)$, respectively (see Figure~\ref{fig:tightness_Eu}).
\end{thm}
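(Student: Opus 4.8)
The plan is: (i) reduce the tightness to the supremum of the circumradii of the Delaunay cells; (ii) prove the lower bound $\gamma_\lambda(B)\ge$ (the displayed value) for every $\lambda$-separable packing of unit disks by a local estimate coming from the refined Molnár decomposition; (iii) exhibit $\lambda$-separable lattice packings attaining it, which together with the trivial bound $\gamma_\lambda(B)\le\gamma_\lambda^*(B)$ gives all the asserted equalities; and (iv) extract the ``if and only if'' from the equality discussion in (ii)--(iii). For the reduction, note that for any unit disk packing $\mathcal F=B+X$ the tightness equals $\sup_T R(T)$, the supremum of the circumradii of the Delaunay cells of $X$: inside a Voronoi cell the point farthest from $X$ is a Voronoi vertex, which is the circumcenter of a Delaunay cell, and by the empty-circle property its distance to $X$ is exactly that circumradius. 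An obtuse Delaunay triangle has its longest side longer than $2\sqrt2$ (the two smaller sides being $\ge 2$, with an obtuse angle between them), hence circumradius $>\sqrt2>\frac{3\sqrt3}{4}$, so only acute Delaunay cells --- whose circumcenters lie inside them --- are relevant.

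For the lower bound, set $r=\sup_T R(T)$ and apply the refined Molnár decomposition of $\Eu^2$, obtained from the Delaunay triangulation of $X$ together with the lines separating the $\lambda$-disks of the totally separable packing $\mathcal P'$. It cuts the plane into polygonal cells, each with a distinguished vertex at a disk center $a$ and contained in the disk of radius $R(T)\le r$ about $a$, bounded in part by Delaunay edges (of length $\ge 2$ --- the packing input) and in part by separating lines (which stay at distance $\ge\lambda$ from the centers and miss the neighbouring $\lambda$-disks --- the separability input). The decomposition is built so that for each cell these two inputs combine into a single local inequality; summing it over all cells (a Gauss--Bonnet/Euler-type accounting for the planar Delaunay triangulation, using that it has a vertex of degree $\le 6$ around which the cells close up) yields $r\ge$ (the displayed value), with equality precisely when every cell is the one arising from the extremal triangle. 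A direct computation identifies that extremal triangle and the bound in the three ranges: $T_{reg}^e(1)$ with circumradius $\frac{2}{\sqrt3}$ for $0\le\lambda\le\frac{\sqrt3}{2}$ (where the separability input is not yet binding); $T^e(\sqrt{2-2\sqrt{1-\lambda^2}})$, which is isosceles with legs of length $2$ and for which the packing and separability constraints become tight simultaneously, with circumradius $\frac{\sqrt{2-2\sqrt{1-\lambda^2}}}{\lambda}$ for $\frac{\sqrt3}{2}\le\lambda\le\frac{2\sqrt2}{3}$; and $T^e(\sqrt{3/2}\,\lambda)$, isosceles and $\lambda$-independent in shape, for which only the separability constraint is tight, with circumradius $\frac{3\sqrt3\lambda}{4}$ for $\frac{2\sqrt2}{3}\le\lambda\le 1$. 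Cells with four or more concyclic vertices only strengthen the estimate.

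For the matching upper bound I would take, in the three ranges respectively, the regular triangular lattice of unit disks, the rhombic lattice of side $2$ whose shorter diagonal has length $2\sqrt{2-2\sqrt{1-\lambda^2}}$, and the rhombic lattice of side $\frac{3\lambda}{\sqrt2}$ whose shorter diagonal has length $\sqrt6\,\lambda$; a short computation shows each is a packing whose Delaunay cells are exactly $T_{reg}^e(1)$, $T^e(\sqrt{2-2\sqrt{1-\lambda^2}})$, $T^e(\sqrt{3/2}\,\lambda)$ (the shorter diagonal being the Delaunay diagonal of the rhombus), so its tightness equals the claimed value. Total separability of the $\lambda$-disks is then checked by taking, for each Delaunay edge, the common tangent line realizing the extremal configuration of the local step (tangent to three $\lambda$-disks at once in the second and third ranges) and verifying, via the lattice symmetry, that this choice is globally consistent and disjoint from every $\lambda$-disk. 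For the equivalence: if the Delaunay cells are all congruent to the extremal triangle, then the tightness is $\gamma_\lambda(B)$ by the circumradius formula; conversely, if the tightness equals $\gamma_\lambda(B)$, then every inequality in the lower-bound argument is an equality, and the equality analysis of the refined Molnár estimate --- together with the packing and separability constraints pinning the relevant Delaunay edges to their extremal lengths --- forces every Delaunay cell to be a triangle congruent to the unique extremal one.

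The step I expect to be the main obstacle is the local inequality supplied by the refined Molnár decomposition: finding the right way to combine the packing input with the separability input, since the naive estimate that puts the opposite vertex in its worst position overshoots the true value of $\gamma_\lambda(B)$ --- one must genuinely use the location of the actual separating lines --- and then summing so as to land exactly on the claimed value with the stated extremizers. A secondary difficulty is verifying total separability of the two rhombic lattices near the transition values $\lambda=\frac{\sqrt3}{2}$ and $\lambda=\frac{2\sqrt2}{3}$, where the separating lines are forced into the tangent-to-three-$\lambda$-disks position.
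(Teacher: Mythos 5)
Your architecture is sound and in places close to the paper's: the tightness of a packing equals the supremum of the circumradii of its Delaunay cells, obtuse cells can be dismissed because their circumradius exceeds $\sqrt{2}$ (which is larger than every claimed value), cells with more concyclic vertices reduce to an inscribed triangle containing the circumcenter, and the extremal lattices you exhibit are exactly the ones in the paper. But the heart of the proof is missing, and you have flagged it yourself as ``the main obstacle.'' What is needed is a purely \emph{local} statement --- the paper's Lemma~\ref{lem:tightnessEu} --- asserting that every single non-obtuse triangle with edge lengths at least $2$ whose vertices carry a totally separable family of $\lambda$-disks has circumradius at least the claimed value, with equality only for the named triangles. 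There is no summation over cells, no isoperimetric step, and no Euler/Gauss--Bonnet accounting with a vertex of degree at most $6$: the circumradius bound holds cell by cell, so the sup over cells inherits it immediately. Your plan to ``combine the two inputs into a single local inequality and sum it over all cells'' misdescribes the shape of the argument (that kind of global accounting is what the paper needs for the contact-number theorem, not for tightness), which suggests the key lemma has not actually been conceived, let alone proved.

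Concretely, the missing steps are: (a) converting separability into a usable geometric constraint --- the paper's Lemma~\ref{lem:trivial} shows the separating lines may be taken through midpoints of sides of a triangle containing its circumcenter, whence at least two heights of the triangle are at least $2\lambda$; (b) a variational argument moving one vertex along the circumcircle of the triangle (so the circumradius is unchanged) to show that a minimizer must be isosceles and either regular of side $2$, or have legs of length exactly $2$, or have its midline tangent to all three $\lambda$-disks; (c) the resulting one-parameter family $T^e(y)$ with $x^e(y)=y^2/(2\sqrt{y^2-\lambda^2})$ and circumradius $R^e(y)=y^3/(2\lambda\sqrt{y^2-\lambda^2})$, which is strictly decreasing up to $y=\sqrt{3/2}\,\lambda$ and strictly increasing afterwards; and (d) the case analysis comparing $R_{reg}^e(1)=\tfrac{2}{\sqrt{3}}$, the constrained value $R^e\bigl(\sqrt{2-2\sqrt{1-\lambda^2}}\bigr)=\tfrac{\sqrt{2-2\sqrt{1-\lambda^2}}}{\lambda}$ at the boundary where the legs equal $2$, and the unconstrained minimum $R^e(\sqrt{3/2}\,\lambda)=\tfrac{3\sqrt{3}}{4}\lambda$ --- this comparison is precisely what produces the thresholds $\lambda=\tfrac{\sqrt{3}}{2}$ and $\lambda=\tfrac{2\sqrt{2}}{3}$. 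Without (a)--(d) the proposal establishes neither the lower bound nor the ``only if'' half of the characterization; the parts you do supply (the reduction to Delaunay circumradii and the extremal lattices) are correct but are the routine parts of the theorem.
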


\begin{figure}[ht]
  \begin{center}
  \includegraphics[width=0.9\textwidth]{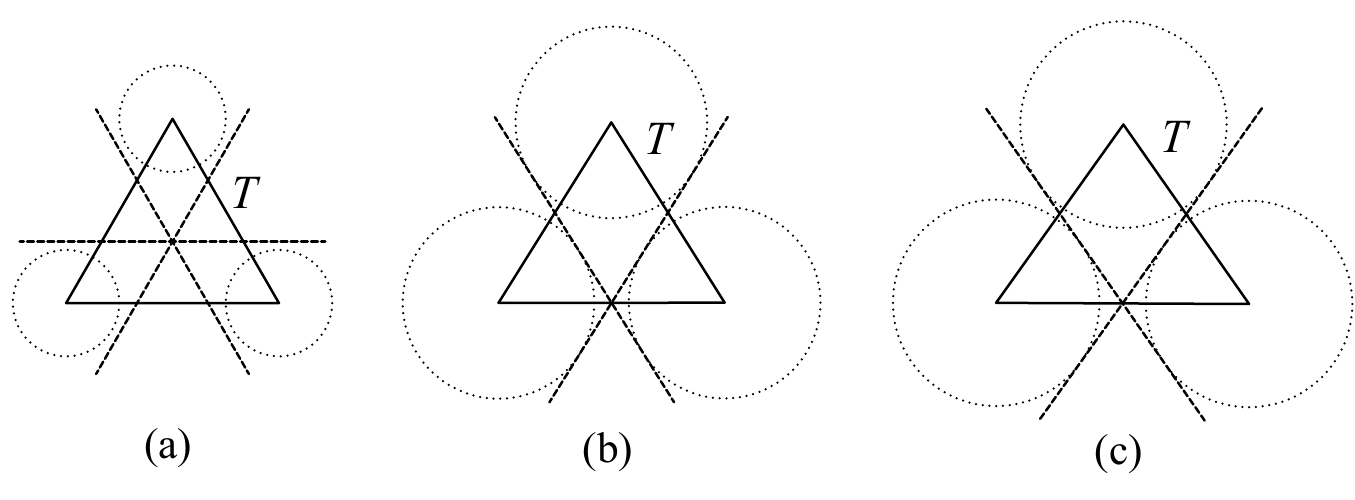} 
 \caption{The Delaunay triangle $T$ of three $\lambda$-separable unit disks in $\Eu^2$ for $\lambda =0.5<\frac{\sqrt{3}}{2}$ (case (a)), $\lambda=0,9 \in \left( \frac{\sqrt{3}}{2}, \frac{2\sqrt{2}}{3} \right)$ (case (b)) and $\lambda=0.97 > \frac{2\sqrt{2}}{3}$ (case (c)). The disks of radius $\lambda$ centered at the vertices of the triangles, and the lines separating them are drawn by dotted and dashed lines, respectively. The triangle $T$ in case (a) is a regular triangle of edge length $2$, the one in case (b) is an isosceles triangle of edge lengths $2$, $2$ and $2\sqrt{2-2\sqrt{1-\lambda^2}}$, and the one in case (c) is an isosceles triangle with edge lengths $\frac{3\lambda}{\sqrt{2}}$, $\frac{3 \lambda}{\sqrt{2}}$ and $\sqrt{6} \lambda$.}
\label{fig:tightness_Eu}
\end{center}
\end{figure}

According to \cite{Ha} the number of touching pairs in a packing of $n>1$ unit disks in $\Eu^2$ (called the {\it contact number} of the given unit disk packing) is at most $\lfloor 3n -\sqrt{12n-3}\rfloor$, where $\lfloor\cdot\rfloor$ denotes the lower integer part of the given real. On the other hand, it is proved in \cite{BeSzSz} (see also \cite{Be21}) that the contact number of an arbitrary totally separable packing of $n>1$ unit disks in $\Eu^2$ is at most $\lfloor 2n - 2\sqrt{n}\rfloor$. Both upper bounds are sharp and one can characterize the extremal configurations (see \cite{HeRa} and \cite{Be21}). For a comprehensive survey on the contact numbers, see \cite{BK}. The problem of finding a sharp upper bound for the contact numbers of $\lambda$-separable packings of $n>1$ unit disks in $\Eu^2$ seems to be a difficult question. We have only the following result, which is obtained using the proof technique of Theorem 4 of Eppstein \cite{Ep} combined with Theorem~\ref{thm:densityEu}.

\begin{defn}
Let $c_{\lambda}(n, B)$ denote the largest number of touching pairs of unit disks in a $\lambda$-separable packing of $n>1$ unit disks in $\Eu^2$.
\end{defn}

\begin{thm}\label{contact-upper-bound}\text{}
\begin{itemize}
\item[(i)] Let $0\leq \lambda\leq\frac{\sqrt{3}}{2}$. Then $c_{\lambda}(n, B)=\lfloor 3n -\sqrt{12n-3}\rfloor$ holds for all $n>1$.
\item[(ii)] Let $\frac{\sqrt{3}}{2}< \lambda\leq 1$. Then $\lfloor 2n - 2\sqrt{n}\rfloor\leq c_{\lambda}(n, B)\leq 2n -\sqrt{\pi\lambda}\sqrt{n}+O(1)$ holds for $n>1$, where $1.6494...=\sqrt{\pi\frac{\sqrt{3}}{2}}<\sqrt{\pi\lambda}\leq\sqrt{\pi}=1.7724...$.
\end{itemize}
\end{thm}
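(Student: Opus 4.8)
For part (i) the upper bound is free: a $\lambda$-separable packing of unit disks is in particular a packing of unit disks, so Harborth's bound \cite{Ha} gives $c_{\lambda}(n,B)\le\lfloor 3n-\sqrt{12n-3}\rfloor$. For the matching lower bound I would use that, by \cite{Ha}, this bound is attained for every $n>1$ by a suitable finite subconfiguration of the triangular (hexagonal) lattice packing of unit disks (the extremal configurations being characterized in \cite{HeRa}). By Theorem~\ref{thm:densityEu} (case $0\le\lambda\le\frac{\sqrt3}{2}$) the full triangular lattice packing, whose Delaunay triangles are copies of $T_{reg}^e(1)$, is $\lambda$-separable; restricting its countable family of separating lines to any finite subfamily of disks shows that every finite subconfiguration is $\lambda$-separable too, so the Harborth-extremal configurations are $\lambda$-separable and $c_{\lambda}(n,B)\ge\lfloor 3n-\sqrt{12n-3}\rfloor$. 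For the lower bound in part (ii) I would note that a totally separable packing of unit disks with separating lines $\{\ell_{ij}\}$ remains, for every $0\le\lambda\le1$, a $\lambda$-separable packing, since $\ell_{ij}$ still separates the $i$-th and $j$-th concentric disks of radius $\lambda$ and misses the interior of each of them; as $\lfloor 2n-2\sqrt n\rfloor$ is attained by totally separable packings \cite{BeSzSz,Be21}, the same packings give $c_{\lambda}(n,B)\ge\lfloor 2n-2\sqrt n\rfloor$.

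\textbf{Upper bound in (ii): triangle-freeness.} Fix $\frac{\sqrt3}{2}<\lambda\le1$, take a $\lambda$-separable packing of $n$ unit disks, and let $G$ be its contact graph drawn with disk centers as vertices and contacts as straight segments of length $2$; write $e=c_{\lambda}(n,B)$ for its number of edges. The first step is that $G$ is triangle-free. Three mutually touching unit disks have centers at the vertices of an equilateral triangle of side $2$, and a short optimization over all lines separating two of the three concentric radius-$\lambda$ disks while staying at distance $\ge\lambda$ from the third center shows that the largest achievable clearance equals exactly $\frac{\sqrt3}{2}$; hence such a triple is $\lambda$-separable only for $\lambda\le\frac{\sqrt3}{2}$, so for $\lambda>\frac{\sqrt3}{2}$ the graph $G$ has no $3$-cycle and (after the reduction below) every bounded face of the drawing is a polygon with at least four edges, all of length $2$.

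\textbf{Upper bound in (ii): the area lemma.} The crux is the lemma that every bounded face of $G$ has area at least $4\lambda$ --- the area of the rhombic face occurring in the extremal lattice packing $T^e\!\left(\sqrt{2-2\sqrt{1-\lambda^2}}\right)$ of Theorem~\ref{thm:densityEu}. I would deduce it from the local density estimate underlying the proof of Theorem~\ref{thm:densityEu}, i.e.\ from the refined Moln\'ar decomposition: a bounded $k$-gon face ($k\ge4$) contains at its corners $k$ sectors of full unit disks whose areas sum to $\frac12\sum_i\theta_i=\frac{(k-2)\pi}{2}\ge\pi$, and the per-cell density bound forces the ratio of this total sector area to the area of the face to be at most $\frac{\pi}{4\lambda}$, so the face has area at least $2\lambda(k-2)\ge4\lambda$. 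I expect this to be the main obstacle, since one must verify that the sharp \emph{local} density bound $\frac{\pi}{4\lambda}$, rather than just the global bound of Theorem~\ref{thm:densityEu}, holds on each individual face, which is exactly what the refined Moln\'ar decomposition has to supply.

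\textbf{Upper bound in (ii): the isoperimetric count.} The last step follows the technique of Theorem~4 of \cite{Ep}. I would first reduce, by a routine argument on $2$-connected blocks, to the case that $G$ is $2$-connected, so the unbounded face is bounded by a simple cycle of, say, $h$ edges and the union $\Omega$ of the bounded faces is a simple polygon of perimeter $2h$. Euler's formula gives $f_0=e-n+1$ bounded faces, and since each has at least $4$ edges, summing face degrees yields $2e\ge 4f_0+h$, i.e.
\[
e\le 2n-2-\tfrac h2 .
\]
By the area lemma $\area(\Omega)\ge 4\lambda f_0$, while the isoperimetric inequality gives $\area(\Omega)\le\frac{(2h)^2}{4\pi}=\frac{h^2}{\pi}$, so $h\ge 2\sqrt{\pi\lambda f_0}=2\sqrt{\pi\lambda(e-n+1)}$. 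Substituting into the displayed inequality gives $(e-n+1)+\sqrt{\pi\lambda(e-n+1)}\le n-1$; solving this quadratic inequality for $\sqrt{e-n+1}$ yields $\sqrt{e-n+1}\le\sqrt{n-1}-\frac{\sqrt{\pi\lambda}}{2}+O(n^{-1/2})$, whence $e-n+1\le (n-1)-\sqrt{\pi\lambda}\sqrt{n-1}+O(1)$ and finally $e\le 2n-\sqrt{\pi\lambda}\sqrt n+O(1)$, as claimed; the stated range $\sqrt{\pi\tfrac{\sqrt3}{2}}<\sqrt{\pi\lambda}\le\sqrt\pi$ is just the monotonicity of $\sqrt{\pi\lambda}$ on $\left(\frac{\sqrt3}{2},1\right]$.
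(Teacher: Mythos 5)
Your treatment of part (i) and of the lower bound in (ii) coincides with the paper's: Harborth \cite{Ha} and Heitmann--Radin \cite{HeRa} give (i) once one observes (via Theorem~\ref{thm:densityEu}) that the triangular-lattice configurations are $\lambda$-separable for $\lambda\le\frac{\sqrt3}{2}$, and the lower bound in (ii) comes from totally separable extremal packings (the paper cites Theorem 11 of \cite{Be21}). For the upper bound in (ii) you follow the same Eppstein-style strategy, but you reorganize the two key steps, and the reorganization is where the trouble lies. The paper does \emph{not} prove a per-face area bound. It applies the per-cell density bound $\frac{\pi}{4\lambda}$ (Lemma~\ref{Bezdek-Langi}) to the \emph{entire} region $A$ obtained as the complement of the outer face: all but at most the $O(\sqrt n)$ boundary disks lie wholly in $A$, and $A$ is a union of refined Moln\'ar cells, so $\area(A)\ge 4\lambda\bigl(n-O(\sqrt n)\bigr)$; the isoperimetric inequality then shows the outer face has at least $2\sqrt{\pi\lambda}\sqrt n-O(1)$ vertex-face incidences (Lemma~\ref{eppstein-type}), and Eppstein's Lemma 5 (quoted as Lemma~\ref{edge-estimate}: a triangle-free plane graph with a face of $k$ incidences has at most $2n-\frac k2-2$ edges) finishes the count. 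Your Euler-formula computation is exactly Eppstein's lemma specialized to the $2$-connected case, so that part is fine modulo the block reduction you wave at.

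The genuine gap is your ``area lemma.'' You need every bounded face of the contact graph to have area at least $4\lambda$, and you propose to get it by asserting that the packing has density at most $\frac{\pi}{4\lambda}$ \emph{inside each individual face}. That does not follow from Lemma~\ref{Bezdek-Langi} unless each bounded face of $G_c(\mathcal P)$ is a union of refined Moln\'ar cells, and this is not automatic: although every contact edge is a Delaunay edge (the midpoint of the segment joining two touching centers is the center of an empty disk), the Moln\'ar construction replaces separating sides by bridges that protrude into the \emph{adjacent} Delaunay cell, so a priori a Moln\'ar cell can cross a contact edge and the density bound does not localize to faces. You would have to rule this out (or prove the $\area\ge 4\lambda$ bound for faces directly, e.g.\ a rhombic face of side $2$ and angle $\theta$ has area $4\sin\theta$, and one must show $\lambda$-separability forces $\sin\theta\ge\lambda$ --- not obvious, since packing constraints alone only give $\sin\theta\ge\frac{\sqrt3}{2}$). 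The paper's coarser, global use of the density bound on $A$ is precisely what lets it avoid this localization issue; I would recommend rewriting your step 3 in that form rather than trying to repair the per-face claim.
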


\begin{prob}\label{max-contact}
 Let $\frac{\sqrt{3}}{2}< \lambda\leq 1$. Then prove or disprove that $c_{\lambda}(n, B)=\lfloor 2n - 2\sqrt{n}\rfloor$ holds for all $n>1$. 
\end{prob}

\begin{rem}
Recall the following more general question of Swanepoel \cite{Sw}: Is it true that the number of touching pairs in any packing of $n>1$ unit disks in $\Eu^2$ having no touching triplets (i.e., whose contact graph is triangle-free) is at most $\lfloor 2n - 2\sqrt{n}\rfloor$? We note that if $\frac{\sqrt{3}}{2}< \lambda\leq 1$, then clearly in any $\lambda$-separable packing of $n>1$ unit disks in $\Eu^2$ there are no touching triplets. Thus, if the answer to Swanepoel's question is affirmative, then so is the answer to Problem~\ref{max-contact}. 
\end{rem}

In the next two theorems we deal with $\lambda$-separable packings of spherical caps of radius $\rho$ in $ \Sph^2 $. Clearly, if such a packing contains at least three caps, then $\lambda \leq \frac{\pi}{4}$, and we will see that in this case the inequality $\lambda \leq \frac{\pi}{2}-\rho$ is also satisfied. Totally separable packings of congruent spherical caps have been investigated in the recent paper \cite{BL23}. The next theorem extends Part (i) of Theorem 1.10 in \cite{BL23} to $\lambda$-separable packings of spherical caps of radius $\rho$  in $ \Sph^2 $ as follows.

\begin{defn}
Let $\delta_\lambda^s(\rho)$ denote the largest density of $\lambda$-separable packings of spherical caps of radius $\rho$ in $ \Sph^2 $, i.e., the largest fraction of $ \Sph^2 $ covered by a $\lambda$-separable packing of spherical caps of radius $\rho$ in $ \Sph^2 $.
\end{defn}

\begin{thm}\label{thm:densitySph}
Let $0 \leq \lambda \leq \rho < \frac{\pi}{2}$ with $\rho >0$ and $\lambda \leq \frac{\pi}{2} - \rho$. Then
\begin{equation}\label{eq:densitylemSph_thm}
\delta_\lambda^s(\rho) \leq
\left\{
\begin{array}{l}
\delta(T_1^s(x_1^s|_{S_1}^{-1}(\rho))), \hbox{ if } \lambda \leq \rho \leq \min \left\{ y_s^s(\lambda), \frac{\pi}{4} \right\}, \\
\delta(T_{reg}^s(\rho)), \hbox{ if } y_s^s(\lambda) < \rho \leq y_b^s(\lambda),\\
\delta(T_2^s(\rho)), \hbox{ if } \frac{\pi}{4} < \rho, \hbox{ and } \rho < y_s^s(\lambda) \hbox{ or } \rho > x_b^s(\lambda).
\end{array}
\right.                                                                      
\end{equation}
Furthermore, in the above three cases we have equality if and only if there is a $\lambda$-separable packing $\F$ of spherical caps of radius $\rho$ whose Delaunay triangles tile $\Sph^2$ and are congruent to $T_1^s(x_1^s|_{S_1}^{-1}(\rho))$, $T_{reg}^s(\rho)$, or $T_2^s(\rho)$, respectively.
\end{thm}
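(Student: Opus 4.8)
The plan is to establish the density bound by a local argument on the cells of the refined Moln\'ar decomposition, exactly in the spirit of the classical proofs for disk packings and totally separable packings. First I would recall the setup: given a $\lambda$-separable packing $\F$ of spherical caps of radius $\rho$, pass to the concentric caps of radius $\lambda$, which form a totally separable packing, and then build the Delaunay decomposition with respect to the centers of the caps of $\F$. The refined Moln\'ar decomposition then subdivides each Delaunay cell into pieces, each associated with one vertex (center) and controlled by the separating lines of the $\lambda$-caps together with the spherical geometry. The target inequality $\delta_\lambda^s(\rho)\le \delta(T)$ for the appropriate extremal triangle $T$ would follow if I can show that for every cell $C$ of the refined decomposition with associated center $c$, the local density $\area(C\cap (\text{cap of radius }\rho\text{ at }c))/\area(C)$ is at most $\delta(T)$, i.e., each cell is ``at least as inefficient'' as a corresponding piece of the extremal triangle.

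The heart of the matter is the local optimization: among all admissible triangles (or more precisely, admissible cells) that can arise in a $\lambda$-separable packing of radius-$\rho$ caps, which one maximizes the cap-area-to-cell-area ratio? This is where the functions $x_1^s, x_2^s$, the isosceles triangles $T_i^s(y)$, and the threshold values $y_s^s(\lambda), y_b^s(\lambda)$ enter. I would parametrize a Delaunay triangle by its edge lengths, impose the $\lambda$-separability constraint (two $\lambda$-caps at two vertices must admit a separating line missing the interiors of the others, which translates into an inequality on the ``width'' of the triangle in the direction of each edge, relating the half-edge-length to $x_i^s$), and then maximize the density function $\delta(T)=3\cdot(\text{cap area at a vertex})/\area(T)$ over this constrained region. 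The three cases in the statement correspond to the three regimes of $\rho$ where the maximizer is respectively the degenerate isosceles triangle $T_1^s(x_1^s|_{S_1}^{-1}(\rho))$ pinned by the separability constraint, the regular triangle $T_{reg}^s(\rho)$ where the constraint is inactive and the unconstrained optimum is feasible, and the other isosceles triangle $T_2^s(\rho)$. The case analysis hinges on the monotonicity properties of $x_1^s$ recorded in Remark~\ref{rem:monotonicityofx} and on locating where the boundary of the feasible region meets the level sets of $\delta$; the algebraic solutions $y_s^s(\lambda), y_b^s(\lambda)$ are precisely the crossover points, obtained by solving the resulting quadratic in $\sin^2 y$.

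The reduction from the global density to the cell-wise bound requires care because the refined Moln\'ar cells need not themselves be triangles, and for caps that are ``isolated'' (not in contact or near-contact with enough neighbors) the associated cell may be unbounded or the Delaunay cell may be a polygon with more than three sides. Here I would invoke the standard convexity/monotonicity fact that splitting a polygonal Delaunay cell into triangles by diagonals from a vertex and distributing via the refined decomposition can only decrease (or leave unchanged) the worst-case local density, so it suffices to treat triangular cells; near-contact and the $\lambda$-separability hypothesis guarantee the relevant edges are short enough that the decomposition is well-defined. For the equality characterization, I would trace back through these steps: equality forces every refined cell to be a congruent copy of the optimal piece, which forces every Delaunay triangle to be congruent to $T_1^s(x_1^s|_{S_1}^{-1}(\rho))$, $T_{reg}^s(\rho)$, or $T_2^s(\rho)$ according to the case, and forces these triangles to tile $\Sph^2$.

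The main obstacle I anticipate is the local optimization over the constrained family of admissible triangles together with verifying that the refined Moln\'ar decomposition genuinely reduces the global estimate to this local one with the correct constants. In particular, confirming that the $\lambda$-separability constraint translates cleanly into the inequality governed by $x_i^s$ (rather than a weaker or stronger bound) and that no non-triangular or unbounded cell can beat the triangular optimum is the delicate part; the messy-looking closed forms for $y_s^s, y_b^s$ suggest the case boundaries are where two competing candidate optima exchange dominance, and pinning down that exchange rigorously — rather than just computationally — is the crux.
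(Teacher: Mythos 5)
Your overall strategy --- refined Moln\'ar decomposition, reduction to a cell-wise density bound, and a constrained optimization over admissible triangles whose extremals are the tangency-pinned isosceles triangles $T_1^s$, $T_2^s$ and the regular triangle, with $y_s^s(\lambda)$, $y_b^s(\lambda)$ arising as roots of a quadratic in $\sin^2 y$ --- is the paper's strategy. But there are two genuine gaps. First, your reduction to triangular cells fails as stated: you propose that ``splitting a polygonal Delaunay cell into triangles by diagonals from a vertex \dots can only decrease the worst-case local density.'' It cannot: a Delaunay triangle that does not contain its circumcenter (equivalently, whose circumradius exceeds $R_\rho=\arcsin(\sqrt{2}\sin\rho)$) can have local density exceeding every bound in \eqref{eq:densitylemSph_thm}; this is precisely the classical obstruction that Moln\'ar's construction exists to remove. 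The paper instead replaces the separating side of such a cell by the bridge through its circumcenter, producing ``type 2'' cells of the form $\cl\left(\conv\{v,c_i,c_j\}\setminus\conv\{v',c_i,c_j\}\right)$, bounds their density by a separate monotonicity lemma (Lemma~\ref{lem:BL23}, quoted from \cite{BL23}), and imposes a saturation hypothesis to keep all circumradii below $2R_\rho$. Your plan has no working substitute for this step, and your description of the refined cells as each ``associated with one vertex'' misstates the decomposition (the type 1 cells are Delaunay triangles or regular quadrangles with several centers as vertices, and the density counts all caps meeting the cell via the angle sum).

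Second, in the local optimization you pass from ``maximize $\delta$ over the constrained region'' directly to the one-parameter families $T_1^s(y)$, $T_2^s(y)$ without the symmetrization that justifies restricting to isosceles triangles with an active tangency constraint. The paper fixes the base $[a,b]$, moves the apex along the Lexell circle (the constant-area locus), and shows that each of the three constraints --- containing the circumcenter, edge lengths at least $2\rho$, and separability, the last reduced via Lemma~\ref{lem:trivial} to the midlines avoiding the three $\lambda$-caps --- cuts out a closed arc symmetric about the perpendicular bisector of $[a,b]$; hence the symmetric apex is feasible, a minimizer of area may be taken isosceles, and it must satisfy the tangency condition (c**) or be the regular triangle of edge length $2\rho$. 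Without this (or an equivalent) argument, your parametrized family is not known to contain the optimum, and the subsequent case analysis over $y$, however correct, does not close the proof.
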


\begin{defn}
Let $\gamma_{\lambda}^s(\rho)$ denote the smallest $r>0$ such that there exists a $\lambda$-separable packing $\mathcal{P}$ of spherical caps of radius $\rho$ in $ \Sph^2 $ having the property that the spherical caps of radius $r$ concentric to the spherical caps of $\mathcal{P}$ cover $ \Sph^2 $.
\end{defn}

The following theorem extends earlier investigations on $\gamma_{0}^s(\rho)$ in \cite{Fe76} to $\lambda$-separable packings of spherical caps of radius $\rho$ in $ \Sph^2 $ as follows.

\begin{thm}\label{thm:tightnessSph}
Let $0 \leq \lambda \leq \rho < \frac{\pi}{2}$ with $\rho >0$ and $\lambda \leq \frac{\pi}{2} - \rho$. Then
\begin{equation}\label{eq:crlemSph_thm}
\gamma_{\lambda}^s(\rho) \geq
\left\{
\begin{array}{l}
R_1^s(y_{min}^s(\lambda)), \hbox{ if } \lambda \leq \rho \leq x_1^s(y_{min}^s(\lambda)),\\
R_1^s(x_1^s|_{S_1}^{-1}(\rho)), \hbox{ if } x_1^s(y_{min}^s(\lambda)) < \rho \leq \min  \left\{ y_s(\lambda), \frac{\pi}{4} \right\},\\
R_{reg}^s(\rho), \hbox{ if } y_s^s(\lambda) < \rho \leq y_b^s(\lambda),\\
R_2^s(\rho), \hbox{ if } \frac{\pi}{4} < \rho, \hbox{ and } \rho < y_s^s(\lambda) \hbox{ or } \rho > x_b^s(\lambda).
\end{array}
\right.                                                                      
\end{equation} 
Furthermore, in the above four cases we have equality if and only if there is a $\lambda$-separable packing $\F$ of spherical caps of radius $\rho$ whose Delaunay triangles tile $\Sph^2$ and are congruent to $T_1^s(y_{min}^s(\lambda))$, $T_1^s(x_1^s|_{S_1}^{-1}(\rho))$, $T_{reg}^s(\rho)$, or $T_2^s(\rho)$, respectively.
\end{thm}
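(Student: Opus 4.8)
\textbf{Proof proposal for Theorem~\ref{thm:tightnessSph}.}

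The plan is to reduce the global covering-radius problem to a local optimization over a single Delaunay triangle, exactly paralleling the scheme used for Theorem~\ref{thm:densitySph}. Given a $\lambda$-separable packing $\mathcal{P}$ of spherical caps of radius $\rho$, I would first pass to the Delaunay decomposition of $\Sph^2$ associated with the centers of $\mathcal{P}$, then apply the refined Moln\'ar decomposition to split each Delaunay cell into triangles whose vertices are centers of caps. The key observation is that the covering radius of the enlarged caps is at least the maximum over all refined Moln\'ar triangles of the circumradius of that triangle (the circumcenter of a triangle with all vertices being cap centers is the point of the triangle farthest from every center). So it suffices to show that for every such triangle $T$ with pairwise distances at least $2\rho$ and satisfying the $\lambda$-separability constraint on its three vertices (meaning each pair of the radius-$\lambda$ caps at the vertices is separated by a chord disjoint from the third), the circumradius $R(T)$ is at least the right-hand side of \eqref{eq:crlemSph_thm}. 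I would formulate this as: minimize $R(T)$ over the (closed) region of admissible triangles.

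The next step is to analyze this constrained minimization. Reduce to the ``worst'' triangle shape: a standard variational/symmetrization argument (shrinking the triangle while keeping feasibility) forces the minimizer to be either the regular triangle $T_{reg}^s(\rho)$, or an isosceles triangle with base $2y$ and legs $2x_1^s(y)$ where the legs are exactly at the separability-tightness constraint (this is precisely where the functions $x_1^s(y)$ and $x_2^s(y)$ from \eqref{eq:x1s}--\eqref{eq:x2s} enter: $2x_i^s(y)$ is the shortest leg length for which two radius-$\lambda$ caps a distance $2y$ apart can still be separated by a line avoiding a third cap centered on the perpendicular bisector). One then has to locate, among these isosceles triangles, the one of smallest circumradius: writing $R_1^s(y)$ for the circumradius of $T_1^s(y)$ as a function of the base half-length $y$ (and using the monotonicity of $x_1^s$ recorded in Remark~\ref{rem:monotonicityofx} to know which branch of the constraint is active), a calculus computation shows $R_1^s(y)$ has a unique interior critical point, and solving $\frac{d}{dy}R_1^s(y)=0$ yields exactly $y=y_{min}^s(\lambda)$ — this is the origin of the cubic whose Cardano solution appears in \eqref{eq:crsols}. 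Matching the base constraint $2\rho$ of the actual packing against the various branches (whether $2\rho$ is below, at, or above the base lengths of the relevant extremal triangles, and whether $\rho$ is below or above $\frac{\pi}{4}$ so that $x_1^s$ or $x_2^s$ governs) produces the four cases in the statement.

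For the equality characterization, I would argue that equality in the chain (covering radius $\geq$ max circumradius $\geq$ extremal value) forces every refined Moln\'ar triangle to be congruent to the extremal triangle, hence the Delaunay triangles themselves tile $\Sph^2$ by congruent copies of $T_1^s(y_{min}^s(\lambda))$, $T_1^s(x_1^s|_{S_1}^{-1}(\rho))$, $T_{reg}^s(\rho)$, or $T_2^s(\rho)$; conversely, checking that such a tiling is realized by a genuine $\lambda$-separable packing attaining the bound is a direct verification (the separating lines are the perpendicular bisectors of the Delaunay edges, and one checks they avoid the interiors of the radius-$\lambda$ caps, which is exactly the defining property of $x_i^s$).

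The main obstacle will be the constrained minimization of the circumradius: unlike the density case, where one minimizes area of the triangle (a quantity with transparent monotonicity), here $R(T)$ is not monotone along the constraint curve, so one genuinely needs the interior critical point, and carrying out the differentiation of $R_1^s(y)$ — with $x_1^s(y)$ itself given by the nested $\arcsin$ in \eqref{eq:x1s} — down to the cubic in $\sin^2 y$ requires care. A secondary subtlety is verifying that in the relevant parameter ranges the refined Moln\'ar triangles actually satisfy the hypotheses of the local lemma (in particular that no degeneracies occur and the separability constraint is correctly transcribed as $x\geq x_i^s(y)$), and that the case divisions in \eqref{eq:crlemSph_thm} exhaust all positions of $2\rho$ relative to the breakpoints $y_s^s(\lambda), y_b^s(\lambda), x_b^s(\lambda)$, and $\frac{\pi}{4}$.
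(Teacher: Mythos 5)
Your proposal follows essentially the same route as the paper: reduction via the refined Moln\'ar decomposition to a circumradius minimization over a single triangle with sides at least $2\rho$ satisfying the $\lambda$-separability constraint, a Lexell-type variational argument pushing the minimizer to either $T_{reg}^s(\rho)$ or an isosceles triangle with the separating lines tangent to all three radius-$\lambda$ caps (i.e.\ $T_1^s(y)$ or $T_2^s(y)$), the calculus locating the unique critical point $y_{min}^s(\lambda)$ of $R_1^s(y)$ via the cubic behind \eqref{eq:crsols}, and the final case analysis and equality discussion. The only detail you gloss over is the treatment of the type~2 (bridge) cells of the refined Moln\'ar decomposition, whose circumradius is at least $R_\rho=\arcsin(\sqrt{2}\sin\rho)$ by construction and which therefore cannot be the minimizers; otherwise your plan matches the paper's proof.
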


\begin{rem}

Using the characterization of tilings of $\Sph^2$ with congruent spherical (isosceles) triangles (see \cite{D} and \cite{UA}), one may find pairs $(\lambda,\rho)$ with $0 < \lambda < \rho$ where our estimates are sharp. In particular, an elementary computation shows that the tilings of $\Sph^2$ generated by the isosceles triangles $H_{16}$ and $H_{20}$ in \cite{UA} are the Delaunay decompositions of $\lambda$-separable packings of spherical caps of radius $\rho$ with both maximal density and minimal tightness. The values of $\lambda$ and $\rho$ are
\[
\rho= \frac{1}{2} \arcsin\left(  \sqrt{2 \sqrt{2}-2} \right) \approx  0.57186,  \quad  \lambda= \arcsin \left(   \frac{ 2 \sin
\frac{\pi}{8}  \sqrt{1 + \sqrt{2} }    }{  \sqrt{4 + \sqrt{2}} } \right) \approx 0.53644 \quad \hbox{for } H_{16},
\]
\[
\rho = \frac{1}{2} \arcsin \left(  \frac{\sqrt{ 1 + 2\cos \frac{\pi}{5}}}{  1 + \cos \frac{\pi}{5} } \right) \approx
0.55357, \quad \lambda = \arcsin \left(  \frac{2}{\sqrt{5}}    \sin \frac{\pi}{10}
\sqrt{ 1 + 2 \cos \frac{\pi}{5} }    \right) \approx 0.46365 \quad \hbox{for } H_{20}.
\]
In addition, for $n=4,6,12$,  let $\F_n$ denote the family of $n$ spherical caps of radius $\rho_n$ with $\rho_4=\arcsin \sqrt{\frac{2}{3}}$, $\rho_6=\frac{\pi}{4}$, $\rho_{12}=\arcsin \frac{1}{2 \sin \frac{2\pi}{5}}$, where the centers of the caps are the vertices of a regular tetrahedron, octahedron or icosahedron inscribed in $\Sph^2$, respectively. The Delaunay triangles of $\F_n$ are regular triangles of edge length $2\rho_n$, and thus, they are $\lambda$-separable families of maximal density and minimal tightness for any value of $\lambda$ satisfying $y_s^s(\lambda) < \rho \leq y_b^s(\lambda)$.
\end{rem}

Finally, in the next two theorems we deal with $\lambda$-separable packings of hyperbolic disks of radius $\rho$ in $\HH^2$. 

\begin{defn}
Let $\delta_\lambda^h(\rho)$ denote the largest density of $\lambda$-separable packings of hyperbolic disks of radius $\rho$ in the cells of the hyperbolic refined Moln\'ar decompositions in $\HH^2$. 
\end{defn}
 
It has been observed by B\"or\"oczky \cite{Bo78} that the notion of density of sphere packings in hyperbolic space has to to be introduced with respect to well-defined underlying cell decompositions of the hyperbolic space (such as the hyperbolic refined Moln\'ar decomposition introduced in the Appendix).

\begin{thm}\label{thm:densityH}
Let $0 \leq \lambda \leq \rho$, where $0 < \rho$. Then
\begin{equation}\label{eq:densitylemH_thm}
\delta_\lambda^h(\rho) \leq
\left\{
\begin{array}{l}
\delta(T^h(x^h|_{H_1}^{-1}(\rho))), \hbox{ if } \lambda \leq \rho \leq y_s^h(\lambda), \\
\delta(T_{reg}^s(\rho)), \hbox{ if } y_s^h(\lambda) < \rho.\\
\end{array}
\right.                                                                     
\end{equation}
Furthermore, in the above two cases we have equality if there is a $\lambda$-separable packing of hyperbolic disks of radius $\rho$ whose Delaunay triangles are congruent to $T^h(x^h|_{H_1}^{-1}(\rho))$ or $T_{reg}^h(\rho)$, respectively.
\end{thm}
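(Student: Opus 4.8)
\noindent\emph{Proof plan.} The plan is to mimic the proofs of Theorems~\ref{thm:densityEu} and \ref{thm:densitySph}, transplanting the argument to $\HH^2$ by means of the hyperbolic refined Moln\'ar decomposition. Given a $\lambda$-separable packing $\mathcal P$ of hyperbolic disks of radius $\rho$, I first pass to the hyperbolic refined Moln\'ar decomposition of $\HH^2$ associated with $\mathcal P$ (Appendix). Its cells tile $\HH^2$; in every cell $Z$ the part of $\mathcal P$ lying in $Z$ is a union of circular sectors contained in $Z$; and each cell $Z$ is cut out of a Delaunay triangle $T$ of $\mathcal P$ (a triangle whose vertices are centers of disks of $\mathcal P$ and whose circumscribed disk contains no further such center) by the refinement operation. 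The point of the refinement — and the place where $\HH^2$ differs most from $\Eu^2$ — is that it is arranged so that the local density $\area(\mathcal P\cap Z)/\area(Z)$ is never larger than the quantity
\[ \delta(T):=\frac{(\alpha_1+\alpha_2+\alpha_3)(\cosh\rho-1)}{\pi-(\alpha_1+\alpha_2+\alpha_3)} \]
attached to a triangle $T$ with angles $\alpha_1,\alpha_2,\alpha_3$ all of whose vertex--sectors of radius $\rho$ are contained in $T$ (for the ``flat'' triangles, where the sectors spill out, the cell density is bounded by an even smaller expression). Since $\delta_\lambda^h(\rho)$ is by definition measured against these decompositions, it suffices to bound $\delta(T)$ by the right--hand side of \eqref{eq:densitylemH_thm} over all triangles $T$ that can arise as Delaunay triangles of a $\lambda$-separable packing of disks of radius $\rho$. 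Because $\area(T)=\pi-(\alpha_1+\alpha_2+\alpha_3)$ and $t\mapsto t(\cosh\rho-1)/(\pi-t)$ is strictly increasing on $(0,\pi)$, this in turn is the same as minimizing $\area(T)$ over the admissible triangles.

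\noindent\emph{The admissible triangles.} A triangle $T=ABC$ arises as a Delaunay triangle of a $\lambda$-separable packing of disks of radius $\rho$ precisely when (i) each of its sides has length at least $2\rho$ (the packing condition), and (ii) for each side, say $AB$, there is a geodesic separating the $\lambda$-disks centered at $A$ and $B$ and disjoint from the interior of the $\lambda$-disk centered at $C$. Condition (ii) is a visibility condition on the three centers; the governing computation is to write the candidate geodesic as a line meeting $AB$ transversally and to optimize its inclination, using the hyperbolic right--triangle identity $\sinh(\mathrm{distance})=\sinh(\mathrm{hypotenuse})\sin(\mathrm{angle})$. Carrying this out for an \emph{isosceles} triangle with base of half--length $y$ shows that condition (ii) for the base is equivalent to the legs having length at least $2x^h(y)$, with $x^h$ as in \eqref{eq:xh}; in other words $x^h(y)$ is the smallest half--leg compatible with $\lambda$-separability for a pair of disks whose centers are at distance $2y$, and the borderline configuration is exactly the isosceles triangle $T^h(y)$.

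\noindent\emph{Minimizing the area.} The technical heart is a symmetrization argument showing that the minimum of $\area(T)$ over triangles satisfying (i)--(ii) is attained at an isosceles triangle: if $T=ABC$ is not isosceles one shortens its longest side towards $2\rho$, or replaces $T$ by the isosceles triangle with the same longest side and an adjusted apex, without violating (i) or (ii) and without increasing the area, equality forcing $T$ to be extremal. Once $T$ is isosceles, write $y$ for the half--length of its base; then (i) forces base and legs $\ge 2\rho$, and (ii) applied to the base forces the legs $\ge 2x^h(y)$ (the separability conditions on the legs turn out to be implied in the relevant range), so the legs have length $\ge 2\max\{\rho,x^h(y)\}$. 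For fixed leg length the area of an isosceles triangle is a unimodal function of its base (vanishing at both ends of the admissible range), so the minimum over the feasible $y$ occurs at an endpoint, and a short computation identifies it. If $\rho\le y_s^h(\lambda)$, then $x^h|_{H_1}^{-1}(\rho)\ge\rho$, and the optimum has legs $2\rho$ and base $2\,x^h|_{H_1}^{-1}(\rho)$, i.e.\ it is $T^h(x^h|_{H_1}^{-1}(\rho))$; if $\rho>y_s^h(\lambda)$, then the separability constraint on the base is already implied by ``base $\ge 2\rho$'' (one checks $x^h(\rho)<\rho$ throughout this range), so all three sides may be taken equal to $2\rho$ and the optimum is the regular triangle $T_{reg}^h(\rho)$. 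At the threshold $\rho=y_s^h(\lambda)$ one has $x^h(\rho)=\rho$, hence $T^h(x^h|_{H_1}^{-1}(\rho))=T_{reg}^h(\rho)$ and the two cases agree; indeed, imposing $x^h(\rho)=\rho$ in \eqref{eq:xh} and solving the resulting quadratic is precisely what produces the value $y_s^h(\lambda)$ in \eqref{eq:ysh}. For the equality clause: if a $\lambda$-separable packing whose Delaunay triangles tile $\HH^2$ and are all congruent to the relevant extremal triangle exists, then, that triangle being one for which the vertex--sectors of radius $\rho$ lie inside it, each cell of its refined Moln\'ar decomposition is that triangle itself, and its density equals $\delta(T^h(x^h|_{H_1}^{-1}(\rho)))$, respectively $\delta(T_{reg}^h(\rho))$, exactly.

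\noindent\emph{Expected main obstacle.} Two points require care. The first is the reduction of the global density to the cell--by--cell estimate: one must verify that the hyperbolic refined Moln\'ar decomposition really does dominate, cell by cell, the triangle density $\delta(T)$ even when $T$ is very obtuse or nearly degenerate, where the vertex--sectors of radius $\rho$ cannot lie inside $T$ and the naive estimate is false; this is the whole purpose of the refinement and is worked out in the Appendix, and it is where $\HH^2$ departs most sharply from $\Eu^2$. The second is the symmetrization reducing to isosceles triangles: since the three separability inequalities of (ii) couple all three vertices, one has to deform $T$ along the boundary of the admissible region keeping all three packing inequalities and all three separability inequalities in force simultaneously, and show that the area strictly decreases unless $T$ is already one of the two extremal triangles.
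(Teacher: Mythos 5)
Your route is the paper's route: pass to the hyperbolic refined Moln\'ar decomposition, reduce the density bound via the angle-defect formula to minimizing $\area(T)$ over admissible triangles, identify the tangency configuration $T^h(y)$ with legs $2x^h(y)$, and split cases at $\rho=y_s^h(\lambda)$. Two steps, however, are genuinely gapped. First, you dispose of the cells whose vertex sectors spill out by saying their density is ``bounded by an even smaller expression'' and that this ``is worked out in the Appendix''. The Appendix only constructs the decomposition and proves it is a tiling; it contains no density estimate. In the paper those are the type~2 cells, cut off by bridges from Delaunay cells of circumradius exceeding $R_{\rho}=\arcsinh(\sqrt{2}\sinh\rho)$, and their density bound is a separate monotonicity lemma (Lemma~\ref{lem:BL23nonSph}, the hyperbolic variant of Lemma~3 of \cite{BL23}: the density of two disks of radius $\rho$ in an isosceles triangle is strictly decreasing in both the leg and the base). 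That lemma is a real ingredient you would still have to supply and then compare against the right-hand side of \eqref{eq:densitylemH_thm}.

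Second, and more seriously, your minimization drops the condition that the triangle contain its circumcenter, which is precisely what the type~1/type~2 dichotomy buys. The function $x^h$ is decreasing on $H_1$ and increasing on $H_2$ with minimum value $\lambda$ at $y=\arcsinh(\sqrt{2}\sinh\lambda)$, so for $\lambda<\rho$ the constraint ``legs $=2\rho$ and tangency'' is met by \emph{two} isosceles triangles, $T^h(x^h|_{H_1}^{-1}(\rho))$ and $T^h(x^h|_{H_2}^{-1}(\rho))$, both with all sides at least $2\rho$ and both satisfying the separability condition. Your unimodality argument locates the minimum at an endpoint of the feasible interval but gives no reason to prefer the $H_1$ endpoint; the comparison is delicate --- in the Euclidean analogue the two candidates have \emph{equal} area $2\rho\lambda$. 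The paper excludes the $H_2$ branch not by an area comparison but because $T^h(y)$ contains its circumcenter only for $y\le\arcsinh(\sqrt{2}\sinh\lambda)$, i.e.\ only on $H_1$; the triangles on the $H_2$ branch are obtuse Delaunay cells, which the refinement dismembers into type~2 cells handled by the other lemma. To close your argument you must either restore the circumcenter constraint on type~1 cells (and check that $x^h|_{H_1}^{-1}(\rho)$ satisfies it, which it does by the definition of $H_1$) or prove directly that $\area(T^h(x^h|_{H_2}^{-1}(\rho)))\ge\area(T^h(x^h|_{H_1}^{-1}(\rho)))$. The remaining sketchy points --- the symmetrization via the constant-area hypercycle and the monotonicity of $\area(T^h(y))$ in $y$ --- match the paper's Steps and are fillable along the lines you indicate.
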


\begin{defn}
Let $\gamma_{\lambda}^h(\rho)$ denote the smallest $r>0$ such that there exists a $\lambda$-separable packing $\mathcal{P}$ of hyperbolic disks of radius $\rho$ in $\HH^2$ having the property that the hyperbolic disks of radius $r$ concentric to the hyperbolic disks of $\mathcal{P}$ cover $\HH^2$.
\end{defn}

\begin{thm}\label{thm:tightnessH}
Let $0 \leq \lambda \leq \rho$, where $0 < \rho$. Then
\begin{equation}\label{eq:crlemH_thm}
\gamma_\lambda^s(\rho) \geq
\left\{
\begin{array}{l}
R^h(y_{min}^h(\lambda)), \hbox{ if } \lambda \leq \rho \leq x^h(y_{min}^h(\lambda)),\\
R^h(x^h|_{H_1}^{-1}(\rho)), \hbox{ if } x^h(y_{min}^h(\lambda)) < \rho \leq y_s^h(\lambda),\\
R_{reg}^h(\rho), \hbox{ if } y_s^h(\lambda) < \rho.
\end{array}
\right.                                                                      
\end{equation}
Furthermore, in the above three cases we have equality if there is a $\lambda$-separable packing of hyperbolic disks of radius $\rho$ whose Delaunay triangles tile $\HH^2$ and are congruent to $T^h(y_{min}^h(\lambda))$, $T^h(x^h|_{H_1}^{-1}(\rho))$, or $T_{reg}^h(\rho)$, respectively.
\end{thm}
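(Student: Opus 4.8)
The plan is to follow the scheme of the proof of Theorem~\ref{thm:densityH}: given a $\lambda$-separable packing $\F$ of hyperbolic disks of radius $\rho$ in $\HH^2$, pass to its refined Moln\'ar decomposition and reduce the problem to a single cell, the relevant local quantity for tightness being the circumradius of the Delaunay triangle carrying that cell. Concretely, suppose the disks of radius $r$ concentric to those of $\F$ cover $\HH^2$. Then all Voronoi cells of the centres are bounded and geodesically convex, so a point of $\HH^2$ farthest from the set of centres is a vertex of such a cell, i.e.\ the circumcenter of a Delaunay triangle $T$ of $\F$, and its distance to the centres is the circumradius $R(T)$ of $T$; hence the tightness of $\F$ equals $\sup_T R(T)$ over the Delaunay triangles $T$ of $\F$. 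Thus Theorem~\ref{thm:tightnessH} reduces to the one-triangle inequality: \emph{if $T$ is a Delaunay triangle of a $\lambda$-separable packing of hyperbolic disks of radius $\rho$, then $R(T)$ is at least the right-hand side of \eqref{eq:crlemH_thm}}. Granting this, $\gamma_\lambda^h(\rho)\ge$ right-hand side follows at once, and the ``furthermore'' part is immediate, since if a $\lambda$-separable packing all of whose Delaunay triangles are congruent to the indicated extremal triangle $T^h(y_{min}^h(\lambda))$, $T^h(x^h|_{H_1}^{-1}(\rho))$, or $T_{reg}^h(\rho)$ exists, then every one of its Delaunay circumradii, and hence its tightness, equals exactly that right-hand side.

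To prove the one-triangle inequality I would first describe the admissible triangles. Such a $T$ satisfies two constraints. First, every edge of $T$ has length at least $2\rho$, since the two radius-$\rho$ disks centred at its endpoints have disjoint interiors. Second, the three radius-$\lambda$ disks centred at the vertices of $T$ admit pairwise separating geodesics, each disjoint from all three of their interiors. As established in the proof of Theorem~\ref{thm:densityH}, this second (local) condition is an explicit inequality among the edge lengths of $T$: its extreme instances are precisely the isosceles triangles $T^h(y)$ of base $2y$ and legs $2x^h(y)$, where for fixed base $2y$ the quantity $2x^h(y)$ of \eqref{eq:xh} is the least leg length for which the radius-$\lambda$ disk at the apex is disjoint from the geodesic separating the two radius-$\lambda$ disks on the base, that extreme geodesic being tangent to the apex disk. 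Hence the edge-length triple of $T$ lies in the region $\mathcal D$ cut out by \eqref{eq:xh} together with the packing bound $2\rho$.

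The final step is a constrained minimization of $R(T)$ over $\mathcal D$. Since $R$ has no interior local minimum among all triangles, a minimizer lies on the boundary of $\mathcal D$, and a symmetrization argument lets one take it isosceles, so it is either the regular triangle $T_{reg}^h(\rho)$ of edge $2\rho$ or a member of the family $\{T^h(y)\}$ subject to the packing bound $2x^h(y)\ge 2\rho$. On $\{T^h(y)\}$ the circumradius $R^h(y)$, written out by hyperbolic trigonometry, is strictly decreasing and then strictly increasing in $y$ (the monotonicity of $x^h$ from Remark~\ref{rem:monotonicityofx} being used to follow the relevant branch $H_1$), with a unique interior minimizer $y_{min}^h(\lambda)$, the root of a cubic displayed in trigonometric form as \eqref{eq:crsolh}. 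Comparing with $R_{reg}^h(\rho)$ and with the packing bound then yields the three cases: if $\rho\le x^h(y_{min}^h(\lambda))$ then $T^h(y_{min}^h(\lambda))$ itself satisfies $2x^h(y_{min}^h(\lambda))\ge 2\rho$ and $R(T)\ge R^h(y_{min}^h(\lambda))$; if $x^h(y_{min}^h(\lambda))<\rho\le y_s^h(\lambda)$ the packing bound is active, the optimum has legs of length $2\rho$, i.e.\ $y=x^h|_{H_1}^{-1}(\rho)$, and $R(T)\ge R^h(x^h|_{H_1}^{-1}(\rho))$; and if $\rho>y_s^h(\lambda)$ — precisely when $T_{reg}^h(\rho)$ is $\lambda$-separable, $y_s^h(\lambda)$ of \eqref{eq:ysh} being the fixed point of $x^h$ — the regular triangle wins, since among all triangles with every edge at least $2\rho$ it has the least circumradius, and $R(T)\ge R_{reg}^h(\rho)$. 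These are the three cases of \eqref{eq:crlemH_thm}.

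The main obstacle is the middle step: making the local $\lambda$-separability condition precise and identifying the function $x^h$, and in particular verifying that this one-triangle condition is genuinely forced even though in a \emph{global} $\lambda$-separable packing the separating geodesics of a Delaunay triangle must be parts of one consistent family over the whole plane. After that one still has to justify the symmetrization reduction to the isosceles family, carry out the sign analysis of $R^h(y)$ and of the defining cubic, and accommodate the obtuse Delaunay triangles — whose circumcenter lies outside the triangle — which is exactly what the refined Moln\'ar decomposition is designed to handle. The remaining computations are lengthy but elementary hyperbolic trigonometry.
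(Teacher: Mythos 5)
Your proposal is correct and takes essentially the same route as the paper: reduce via the Delaunay/refined Moln\'ar decomposition to a one-triangle circumradius inequality for triangles containing their circumcenter (the paper's Lemma~\ref{lem:tightnessH}, proved in parallel with the spherical Lemmas~\ref{lem:densitySph} and~\ref{lem:tightnessSph}), identify the extremal isosceles family $T^h(y)$ through the tangency condition defining $x^h$, and minimize $R^h(y)$ under the edge-length constraint, with the cubic root $y_{min}^h(\lambda)$ and the threshold $y_s^h(\lambda)$ yielding exactly the three cases. The issues you flag at the end (locality of the separability condition, symmetrization to the isosceles family, obtuse triangles handled by type-2 cells) are precisely the points the paper addresses with Lemma~\ref{lem:trivial}, the Lexell/circumcircle argument, and Lemma~\ref{lem:BL23nonSph}.
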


In the rest of the paper we prove the theorems stated. The proofs use local analytic and elementary geometry and are based on the so-called refined Moln\'ar decompositions.  We note that the Euclidean Moln\'ar decomposition was introduced by Moln\'ar in \cite{Molnar}, the spherical refined analogue of which was introduced and applied in \cite{BL23}. For the sake of completeness, the Appendix of this paper gives a description of the hyperbolic refined Moln\'ar decomposition, the method of which extends to the Euclidean as well as the spherical plane.

In the proofs, for any two points $p,q$ in $\M $ with $\M \in \{ \Eu^2, \HH^2, \Sph^2 \}$, which are assumed not to be antipodal if $\M = \Sph^2$, we denote the unique shortest geodesic segment connecting them by $[p,q]$. The distance of $p$ from $q$ is labelled by $d(p,q)$, and $\conv(X)$ (resp., $\area(X)$) stands for the convex hull (resp., the area) of the set $X \subset \M$. If $\M=\Sph^2$, $\conv (X)$ denotes the intersection of all closed hemispheres of $\Sph^2$ containing $X$, or if no such hemisphere exists, then we set $\conv(X) = \Sph^2$.

\section{Proofs of Theorems 4 and 5}\label{sec:proof45}


In the proof we use the refined Moln\'ar decomposition defined by a finite point set $X$ of $\Sph^2$, as in the proof of Theorem 2 of \cite{BL23}. We note that even though in the definition of this decomposition we assumed that no closed hemisphere contains $X$, the same argument can be applied to define it if there is a closed hemisphere of $\Sph^2$ containing $X$. In this case the union of the cells of the refined Moln\'ar decomposition is the spherical convex hull $\conv (X)$ of $X$.

We prove our statement in a more general setting. Namely, we say that a packing $\F$ of spherical caps of radius $\rho$ is \emph{$R$-locally $\lambda$-separable} if 
any triple of caps in $\F$ whose centers are at pairwise spherical distances at most $R$ are $\lambda$-separable.
In the proof we assume that $\F$ is a $(2 R_{\rho})$-locally $\lambda$-separable packing of spherical caps of radius $\rho$, where $R_{\rho}= \arcsin (\sqrt{2} \sin \rho)$ (if $\rho > \frac{\pi}{4}$, then $R_{\rho}$ does not exist, in this case we simply assume that $\F$ is $\lambda$-separable). In addition, we assume that $\F$ is \emph{$(2R_{\rho})$-saturated} in  $\Sph^2$, i.e., every point $p$ of $\Sph^2$ is at distance at most $2R_{\rho}$ from the center of a cap in $\F$, as otherwise the spherical cap of center $p$ and radius $\rho$ can be added to $\F_m$ preserving its $(2R_{\rho})$-separability. This implies that the circumradius of any cell in the Delaunay decomposition of the set $X$ of the centers of the elements of $\F$ is at most $2 R_{\rho}$.

Here we note that $R_{\rho}$ is the circumradius of a regular spherical quadrangle with edge length $2\rho$.

We decompose the cells of the $M$-decomposition of $\conv(X)$ into two types of cells $P$ as follows:
\begin{itemize}
\item[(i)] $P$ has circumradius at most $R_{\rho}$, in this case we say that $P$ is type 1, or
\item[(ii)] it is of the form $P=\cl \left( \conv \{ v,c_i,c_j\} \setminus \conv \{ v',c_i,c_j\} \right)$, where $c_i,c_j \in C$, $v$ is the circumcenter of a Delaunay cell with $c_i$ and $c_j$ as vertices and with circumradius at least $R_{\rho}$, and $d(v',c_i) = d(v',c_j)$. In this case we say that $P$ is type 2.
\end{itemize}

The above defined cell decomposition is called the refined $M$-decomposition of the packing.

We observe that since the sides of $P$ are of length at least $2\rho$, any type 1 spherical polygon $P$ is either a triangle containing its circumcenter, or a regular spherical quadrangle with edge length $2\rho$; the latter quadrangle can be decomposed into two triangles containing their circumcenter.

Let $P$ be a cell of the refined $M$-decomposition. If the sum of the internal angles of $P$ is denoted by $\varphi$, then we define the \emph{density of $\F$ in $P$} as
\[
\delta(P)= \frac{(1-\cos \rho) \varphi}{\area (P)}.
\] 
Clearly, to prove Theorem 4 (respectively, Theorem 5), it is sufficient to prove that for the density of $\F$ in any cell (respectively, circumradius of any cell) of the decomposition is less than or equal to the value promised in the theorem.

If $P$ is a type 2 cell of the decomposition, then this statement readily follows from \cite[Lemma 3]{BL23}, which we quote for completeness, without proof.

\begin{lem}\label{lem:BL23}
Let $Q$ be an isosceles spherical triangle, with vertices $q_1, q_2, p$, where $p$ is the apex. Let the length of the legs of $Q$ be $x$, and that of the base be $y$, where $2\rho\leq y < \pi$ and $\frac{x}{2}< y < \frac{\pi}{2}$. For $i=1,2$, let $S_i$ denote the spherical cap of radius $\rho$, centered at $p_i$. Let $f(x,y)$
denote the density of the packing $\{ S_1, S_2 \}$ in $Q$ as a function of $x$ and $y$. Then $f(x,y)$ is a strictly decreasing function of both $x$ and $y$.
\end{lem}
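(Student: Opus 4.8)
The plan is to reduce the claim to an inequality that is visibly true. Write the vertices of $Q$ as $q_1,q_2,p$ with $p$ the apex, so that the two base angles at $q_1$ and $q_2$ agree; call their common value $\beta$ and the apex angle $\alpha$, both regarded as functions of $(x,y)$. Since $y\ge 2\rho$, the caps $S_1,S_2$ meet $Q$ in two disjoint circular sectors of radius $\rho$ and central angle $\beta$, and a spherical triangle's area equals its angular excess, so
\[
f(x,y)=\frac{2(1-\cos\rho)\,\beta}{\alpha+2\beta-\pi}.
\]
Setting $g:=(\pi-\alpha)/\beta$ and using $\alpha+2\beta-\pi=\beta(2-g)$ together with $0<\pi-\alpha<2\beta$ (a nondegenerate spherical triangle has $\alpha<\pi$ and positive excess), one gets $g\in(0,2)$ and $f=2(1-\cos\rho)/(2-g)$, which is strictly increasing in $g$. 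Hence it suffices to prove that $g$ is strictly decreasing in each of $x$ and $y$.

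Next I would use the angles $(\alpha,\beta)$ as coordinates on the family of isosceles spherical triangles and differentiate $g$ along the level curves $\{y=\mathrm{const}\}$ and $\{x=\mathrm{const}\}$. The spherical laws of cosines give $\cos x=\cot(\alpha/2)\cot\beta$, $\cos\beta=\cot x\,\tan(y/2)$, $1+\cos y=(1+\cos\alpha)/\sin^2\beta$, and $\cos\alpha=(\cos y-\cos^2 x)/\sin^2 x$. The second identity shows that $\beta$ is strictly increasing in $x$ for fixed $y$, and the last that $\alpha$ is strictly increasing in $y$ for fixed $x$; so $g$ is strictly decreasing in $x$ exactly when $dg/d\beta<0$ along $\{y=\mathrm{const}\}$, and strictly decreasing in $y$ exactly when $dg/d\alpha<0$ along $\{x=\mathrm{const}\}$. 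Implicitly differentiating $1+\cos y=(1+\cos\alpha)/\sin^2\beta$ and $\cos x=\cot(\alpha/2)\cot\beta$, and using $(1+\cos\alpha)/\sin\alpha=\cot(\alpha/2)$, one finds $d\alpha/d\beta=-2\cos x$ along $\{y=\mathrm{const}\}$ and $d\beta/d\alpha=-\sin\beta\cos\beta/\sin\alpha$ along $\{x=\mathrm{const}\}$. Substituting these into $g=(\pi-\alpha)/\beta$ converts the two monotonicity statements into
\[
2\beta\cos x<\pi-\alpha\qquad\text{and}\qquad \frac{\sin 2\beta}{2\beta}<\frac{\sin\alpha}{\pi-\alpha}.
\]

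The last step is to verify these two inequalities, and I expect both to follow purely from positivity of the excess, $0<\pi-\alpha<2\beta$. Put $v:=\pi-\alpha\in(0,\pi)$, so $\sin\alpha=\sin v$. For the second inequality: if $2\beta\ge\pi$ its left-hand side is $\le 0<\sin v/v$; if $2\beta<\pi$ then $v<2\beta<\pi$, and the claim is the strict decrease of $t\mapsto\sin t/t$ on $(0,\pi)$ (its derivative $(t\cos t-\sin t)/t^2$ is negative there). For the first inequality: if $\cos x\le 0$ it is trivial; if $\cos x>0$ then $\beta<\pi/2$ (since $\cot(\alpha/2)>0$ forces $\cot\beta>0$ in $\cos x=\cot(\alpha/2)\cot\beta$), and because $\cot(\alpha/2)=\tan(v/2)$ the inequality $2\beta\cos x<v$ rearranges to $\tan(v/2)/(v/2)<\tan\beta/\beta$, which holds since $v/2<\beta$ and $t\mapsto\tan t/t$ is strictly increasing on $(0,\pi/2)$. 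That completes the proof.

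I expect the main obstacle to be finding this route rather than executing it: the decisive moves are replacing the density by $g=(\pi-\alpha)/\beta$, passing to the angular variables $(\alpha,\beta)$, and recognizing that the two derivative conditions are precisely the comparisons $\sin u/u<\sin v/v$ and $\tan(v/2)/(v/2)<\tan(u/2)/(u/2)$ for $u:=2\beta$, $v:=\pi-\alpha$ ordered by $v<u$ — i.e. nothing but the monotonicity of $t\mapsto\sin t/t$ and $t\mapsto\tan t/t$ applied to the ordering that positive excess supplies. Attempted directly in the original variables $(x,y)$, the differentiations are considerably bulkier and this structure is hidden. One should of course double-check the endpoint behaviour and signs — in particular that $\beta$ is genuinely monotone in $x$ throughout the admissible range, which is why it is cleanest to handle the $x$-direction through $\beta$ and the $y$-direction through $\alpha$ — but no real difficulty arises there.
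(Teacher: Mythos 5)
The paper does not prove this lemma at all: it is quoted verbatim from \cite[Lemma 3]{BL23} ``for completeness, without proof,'' so there is no in-paper argument to compare yours against. Your proposal is, as far as I can check, a correct and self-contained proof. The reduction $f=2(1-\cos\rho)/(2-g)$ with $g=(\pi-\alpha)/\beta$, the passage to angular coordinates via the four spherical identities (all of which I verified), the two implicit derivatives $d\alpha/d\beta=-2\cos x$ and $d\beta/d\alpha=-\sin\beta\cos\beta/\sin\alpha$, and the final reduction to the monotonicity of $t\mapsto\sin t/t$ on $(0,\pi)$ and $t\mapsto\tan t/t$ on $(0,\pi/2)$ applied to the ordering $0<\pi-\alpha<2\beta$ supplied by the positive spherical excess -- all of this checks out, including the sign/endpoint cases $2\beta\ge\pi$ and $\cos x\le 0$.

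The one step stated too quickly is the opening claim that each $S_i$ meets $Q$ in a \emph{full} sector of angle $\beta$: the hypothesis $y\ge 2\rho$ only gives disjointness of the two caps. That $S_i\cap Q$ is the entire sector additionally requires the distance from $q_i$ to the opposite side $[q_{3-i},p]$ to be at least $\rho$. This does follow from the stated hypotheses -- the triangle inequality gives $x>y/2\ge\rho$, so neither endpoint of the opposite side is within $\rho$ of $q_i$, and when the perpendicular foot lands inside that side one has $\alpha<\pi/2$, hence $\beta>\pi/4$ by the excess condition, hence $\sin h=\sin y\sin\beta>\tfrac{1}{\sqrt2}\sin(2\rho)=\sqrt2\sin\rho\cos\rho>\sin\rho$ using $y\ge2\rho$ and $y<\tfrac{\pi}{2}$ (which forces $\rho<\tfrac{\pi}{4}$) -- but it deserves the two or three sentences just given rather than the single clause you allotted it. With that patch, the argument is complete, and it is in fact a useful contribution here, since the present paper leaves the lemma unproved and only points to \cite{BL23}.
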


Finally, if $P$ is a type 1 cell of the decomposition, then the assertion follows from Lemmas~\ref{lem:densitySph} and \ref{lem:tightnessSph}.

\begin{lem}\label{lem:densitySph}
Let $T = \conv \{a,b,c\} \subset \Sph^2$ be a spherical triangle with edge lengths at least $2\rho$ that contains its circumcenter. For $* \in \{ a,b,c\}$, let $C_*$ denote the closed spherical cap of radius $\lambda \leq \rho$ centered at $*$. Assume that there are lines $L_a$ and $L_b$ such that $L_a$ separates $C_a$ from $C_b$ and $C_c$, and $L_b$ separates $C_b$ from $C_a$ and $C_c$. Let $\delta(T)$ denote the density of $\{ C_x, C_y, C_z \}$ with respect to $T$.
Then $0 \leq \lambda < \frac{\pi}{4}$, $\lambda \leq \rho \leq \frac{\pi}{2}-\rho$, and we have the following.
\begin{equation}\label{eq:densitylemSph}
\delta(T) \leq
\left\{
\begin{array}{l}
\delta(T_1^s(x_1^s|_{S_1}^{-1}(\rho))), \hbox{ if } \lambda \leq \rho \leq \min \left\{ y_s^s(\lambda), \frac{\pi}{4} \right\}, \\
\delta(T_{reg}^s(\rho)), \hbox{ if } y_s^s(\lambda) < \rho \leq y_b^s(\lambda),\\
\delta(T_2^s(\rho)), \hbox{ if } \frac{\pi}{4} < \rho, \hbox{ and } \rho < y_s^s(\lambda) \hbox{ or } \rho > x_b^s(\lambda).
\end{array}
\right.                                                                      
\end{equation}
Furthermore, in the above three cases we have equality if and only if $T$ is congruent to $T_1^s(x_1^s|_{S_1}^{-1}(\rho))$, $T_{reg}^s(\rho)$, or $T_2^s(\rho)$, respectively.
\end{lem}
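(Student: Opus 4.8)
The plan is to translate the statement into an extremal problem for spherical triangles and then to solve that problem in three steps; the decisive difficulty will be the ``reduction to isosceles'' at the heart of the third step.

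\emph{Reduction to minimizing the angle sum.} Since every edge of $T$ has length at least $2\rho$, the closed caps of radius $\rho$ centred at $a,b,c$ are pairwise disjoint, and as $T$ contains its circumcentre each of them meets $T$ in a circular sector whose angle is the corresponding angle of $T$. Writing $\alpha,\beta,\gamma$ for the angles of $T$ and using the Gauss--Bonnet relation $\area(T)=\alpha+\beta+\gamma-\pi$, the density of $\{C_a,C_b,C_c\}$ (more precisely of the radius-$\rho$ caps) in $T$ is
\[
\delta(T)=(1-\cos\rho)\,\frac{\alpha+\beta+\gamma}{\alpha+\beta+\gamma-\pi},
\]
a strictly decreasing function of $\alpha+\beta+\gamma$, hence of $\area(T)$. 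Thus the lemma is equivalent to the claim that, over all spherical triangles satisfying the hypotheses, $\area(T)$ is minimized -- uniquely up to congruence -- by $T_1^s(x_1^s|_{S_1}^{-1}(\rho))$, $T_{reg}^s(\rho)$, or $T_2^s(\rho)$ in the respective ranges of $\rho$; the equality statement will follow from this uniqueness together with the strict monotonicity above.

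\emph{Making the hypotheses explicit.} Because $C_b$ and $C_c$ are spherically convex, a great circle meeting the interior of none of $C_a,C_b,C_c$ and having $C_a$ on one side and $C_b,C_c$ on the other exists if and only if $C_a$ and $\conv(C_b\cup C_c)$ have disjoint interiors; and since $\conv(C_b\cup C_c)$ is the closed $\lambda$-neighbourhood of the segment $[b,c]$, this holds exactly when $d(a,[b,c])\ge 2\lambda$. Likewise the existence of $L_b$ is equivalent to $d(b,[a,c])\ge 2\lambda$. Denoting by $m_a,m_b,m_c$ the three altitudes of $T$, the hypotheses thus amount to: every edge is $\ge 2\rho$; every angle is $\le \pi/2$ (the circumcentre condition); and $m_a\ge 2\lambda$, $m_b\ge 2\lambda$ -- with no constraint on $m_c$. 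From these (using $\sin m_a=\sin d(a,b)\sin\beta=\sin d(a,c)\sin\gamma$, and similarly for $m_b$) one deduces the asserted bounds $\lambda<\pi/4$ and $\lambda\le\frac{\pi}{2}-\rho$. It is also useful to record the role of the auxiliary quantities preceding the lemma: for an isosceles triangle with apex $c$ and base of half-length $y$, the condition $m_a=m_b=2\lambda$ holds for exactly the two leg half-lengths $x_1^s(y)\le x_2^s(y)$ of \eqref{eq:x1s}--\eqref{eq:x2s} (obtained by solving that equation for the leg length), while $y_s^s(\lambda)\le y_b^s(\lambda)$ of \eqref{eq:yss}--\eqref{eq:ybs} are the two values of $y$ for which such a triangle is regular, i.e.\ the roots of $4\sin^4 y-(3+5\sin^2\lambda)\sin^2 y+4\sin^2\lambda=0$.

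\emph{The minimization.} Bounding the edges by means of the altitude and circumcentre conditions makes the feasible set of triangles compact, so a minimizer $T_0$ of the angle sum exists. A perturbation argument shows that enough constraints are active at $T_0$: as long as some edge exceeds $2\rho$ and no altitude constraint is tight, $T_0$ can be deformed so as to strictly decrease its area while staying feasible (a tight angle constraint $\le\pi/2$ by itself does not obstruct this). Carrying out this analysis, $T_0$ must be one of three triangles: $T_{reg}^s(\rho)$ (all edges $=2\rho$); the isosceles triangle with $d(a,b)=2\rho$ and $m_a=m_b=2\lambda$, which -- by the symmetry of these constraints under interchanging $a$ and $b$ together with a symmetrization argument based on Lemma~\ref{lem:BL23} -- is forced to have apex $c$, i.e.\ $T_0=T_2^s(\rho)$; or the isosceles triangle with $d(a,c)=d(b,c)=2\rho$ and $m_a=m_b=2\lambda$, again forced to have apex $c$, i.e.\ $T_0=T_1^s(x_1^s|_{S_1}^{-1}(\rho))$. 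One then finishes by an elementary comparison of the areas (equivalently angle sums) of these three candidates as functions of $(\lambda,\rho)$: $T_{reg}^s(\rho)$ is feasible precisely when its altitude is $\ge 2\lambda$, and has least area exactly for $y_s^s(\lambda)<\rho\le y_b^s(\lambda)$; $T_1^s(x_1^s|_{S_1}^{-1}(\rho))$ is optimal for $\lambda\le\rho\le\min\{y_s^s(\lambda),\pi/4\}$; and $T_2^s(\rho)$ for $\rho>\pi/4$ with $\rho<y_s^s(\lambda)$ or $\rho>y_b^s(\lambda)$ -- which is exactly \eqref{eq:densitylemSph}.

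\emph{Where the difficulty lies.} The crux is the symmetrization step: ruling out scalene minimizers by showing that whenever both altitude constraints are active (and the edges are not all equal to $2\rho$) the minimizer is isosceles with apex at the vertex $c$ that carries no constraint -- here the asymmetric role of $c$ in the hypotheses, and the need to verify that symmetrization does not destroy the circumcentre condition, require care, and one must also confirm that no other configuration of active constraints yields a competing minimizer. The remaining work is elementary if lengthy: checking that each of the three candidate triangles genuinely satisfies all hypotheses (in particular that it contains its circumcentre, and that the cases $\rho\le\pi/4$ versus $\rho>\pi/4$ select the branch $x_1^s$ respectively $x_2^s$ as the admissible leg length), and carrying out the area comparison, whose transition values are exactly $y_s^s(\lambda)$, $y_b^s(\lambda)$ and $\rho=\pi/4$.
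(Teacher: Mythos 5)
Your overall strategy (reduce to minimizing the angle sum via Girard's theorem, use compactness, identify the active constraints at a minimizer, and compare a short list of isosceles candidates) matches the paper's, which carries out the reduction to isosceles triangles by intersecting arcs of the Lexell circle of $T$ and then shows a minimizer is either regular of edge length $2\rho$ or satisfies the tangency condition that the great circle through the midpoints of two sides touches all three caps of radius $\lambda$. However, there is a genuine gap in your step ``making the hypotheses explicit'': the equivalence of the separation hypothesis with the altitude conditions $m_a,m_b\ge 2\lambda$ is a Euclidean fact that fails on $\Sph^2$. The separation-theorem half of your argument is fine, but on the sphere $\conv(C_b\cup C_c)$ is strictly larger than the $\lambda$-neighbourhood of $[b,c]$: its boundary consists of common tangent great-circle arcs, not hypercycles at distance $\lambda$ from the great circle through $b,c$. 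Equivalently, in the polar picture the existence of $L_a$ amounts to the three caps of radius $\frac{\pi}{2}-\lambda$ centred at $a,-b,-c$ having a common point, which is not an altitude condition. Concretely, for the triangle $T_1^s(y)$ of the statement one computes (using $\sin\beta=\tan\lambda/\sin y$ from the paper's Step 4) that $\sin m_a=\sin(2y)\sin\beta=2\cos y\tan\lambda$, which equals $\sin 2\lambda$ only when $\cos y=\cos^2\lambda$; so the triangles with $m_a=m_b=2\lambda$ are \emph{not} the triangles $T_1^s(y),T_2^s(y)$ defined by \eqref{eq:x1s}--\eqref{eq:x2s}, and your claim that solving $m_a=m_b=2\lambda$ for the leg length yields $x_1^s(y)$ and $x_2^s(y)$ is false (it is true in $\Eu^2$, where $m=2\lambda$ does give $x^e(y)=y^2/(2\sqrt{y^2-\lambda^2})$). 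The correct binding condition, obtained in the paper from Lemma~\ref{lem:trivial} together with the Lexell-circle variation, is that the midline touches $C_a$, $C_b$ and $C_c$ simultaneously, which via the spherical Pythagorean theorem yields $\sin 2x=\cos\lambda\sin^2y/\sqrt{\sin^2y-\sin^2\lambda}$. With your altitude-based feasible set the extremal triangles, and hence the stated bound, would come out wrong.

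Two further points. Your ``symmetrization argument based on Lemma~\ref{lem:BL23}'' is not available for this purpose: that lemma concerns the density of \emph{two} caps in an isosceles triangle and is used for the type~2 cells of the refined Moln\'ar decomposition, not for reducing a scalene minimizer to an isosceles one; the paper instead observes that each of the three feasibility conditions cuts out an arc of the Lexell circle symmetric about the perpendicular bisector of $[a,b]$, so their (nonempty) intersection contains the symmetric point $c_0$. Finally, the ``elementary comparison of the three candidates'' conceals the monotonicity analysis of $\area(T_1^s(y))$ and $\area(T_2^s(y))$ as functions of $y$ (the paper's Steps 4--5), which is needed both to decide which of the two preimages of $\rho$ under $x_1^s$ gives the smaller area and to justify the branch selection between $T_1^s$ and $T_2^s$ across $\rho=\pi/4$.
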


\begin{lem}\label{lem:tightnessSph}
Let $T = \conv \{a,b,c\} \subset \Sph^2$ be a spherical triangle with edge lengths at least $2\rho$ that contains its circumcenter. For $* \in \{ a,b,c\}$, let $C_*$ denote the closed spherical cap of radius $\lambda \leq \rho$ centered at $*$. Assume that there are lines $L_a$ and $L_b$ such that $L_a$ separates $C_a$ from $C_b$ and $C_c$, and $L_b$ separates $C_b$ from $C_a$ and $C_c$. Let $R(T)$ denote the circumradius of $T$.
Then $0 \leq \lambda < \frac{\pi}{4}$, $\lambda \leq \rho \leq \frac{\pi}{2}-\lambda$, and we have the following.
\begin{equation}\label{eq:crlemSph}
R(T) \geq
\left\{
\begin{array}{l}
R_1^s(y_{min}^s(\lambda)), \hbox{ if } \lambda \leq \rho \leq x_1^s(y_{min}^s(\lambda)),\\
R_1^s(x_1^s|_{S_1}^{-1}(\rho)), \hbox{ if } x_1^s(y_{min}^s(\lambda)) < \rho \leq \min  \left\{ y_s(\lambda), \frac{\pi}{4} \right\},\\
R_{reg}^s(\rho), \hbox{ if } y_s^s(\lambda) < \rho \leq y_b^s(\lambda),\\
R_2^s(\rho), \hbox{ if } \frac{\pi}{4} < \rho, \hbox{ and } \rho < y_s^s(\lambda) \hbox{ or } \rho > x_b^s(\lambda).
\end{array}
\right.                                                                      
\end{equation}
Furthermore, in the above four cases we have equality if and only if $T$ is congruent to $T_1^s(y_{min}^s)$, $T_1^s(x_1^s|_{S_1}^{-1}(\rho))$, $T_{reg}^s(\rho)$, or $T_2^s(\rho)$, respectively.
\end{lem}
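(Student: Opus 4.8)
The plan is to prove Lemma~\ref{lem:tightnessSph} in parallel with Lemma~\ref{lem:densitySph}, replacing the density of $\F$ in $T$ by the circumradius $R(T)$ and maximization by minimization. First I would unwind the hypotheses. Since $T$ contains its circumcenter all of its angles are at most $\pi/2$, so the foot of the altitude from each vertex lies on the opposite side; hence the existence of a line disjoint from $\inter C_a\cup\inter C_b\cup\inter C_c$ that separates $C_a$ (resp.\ $C_b$) from the other two caps amounts exactly to the distance of $a$ (resp.\ of $b$) from the opposite side being at least $2\lambda$. Feeding this into the conditions ``all sides are $\ge 2\rho$'' and ``all angles are $\le\pi/2$'' yields $\lambda<\pi/4$ and $\rho\le\pi/2-\lambda$ by a direct estimate, which is the first assertion; the remaining constraints carve out a compact set $\mathcal R$ in the two-dimensional space of congruence classes of spherical triangles, and the task is to minimize $R$ over $\mathcal R$.

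Next, following the proof of Lemma~\ref{lem:densitySph} essentially verbatim, I would reduce to isosceles triangles. A minimizer of $R$ over $\mathcal R$ can have $R$ strictly decreased by an admissible deformation unless at least two of the defining constraints are active at it; inspecting the ways this can occur (two sides equal to $2\rho$; one side equal to $2\rho$ together with one separability or circumcenter constraint; or two such constraints together) shows that a minimizer is isosceles and lies on one of the curves $\{T_1^s(y)\}$, $\{T_2^s(y)\}$ or equals $T_{reg}^s(\rho)$, over exactly the parameter ranges — governed by Remark~\ref{rem:monotonicityofx} and by whether $\rho\le\pi/4$ — that were singled out in the proof of Lemma~\ref{lem:densitySph}. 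It therefore suffices to minimize $R$ along each of these one-parameter families.

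The genuinely new work is the one-variable minimization along $\{T_1^s(y)\}$, which is where $y_{min}^s(\lambda)$ enters. Writing $R_1^s(y)$, the circumradius of the isosceles triangle with base $2y$ and legs $2x_1^s(y)$, in closed form from \eqref{eq:x1s} via the standard relations between the half-sides of an inscribed spherical triangle and its circumradius, one differentiates and finds, after clearing the monotone trigonometric substitutions, that $\frac{d}{dy}R_1^s(y)=0$ is equivalent to a cubic in $\sin^2 y$ with coefficients polynomial in $\sin^2\lambda$; its admissible root is $\sin y_{min}^s(\lambda)$, written by Cardano's formula as in \eqref{eq:crsols}. A sign analysis of $\frac{d}{dy}R_1^s$, using the endpoint values of $x_1^s$ from Remark~\ref{rem:monotonicityofx}, shows this stationary point is the unique minimum of $R_1^s$ on $S_1$, so the minimum of $R$ along $\{T_1^s(y)\}\cap\mathcal R$ is $R_1^s(y_{min}^s(\lambda))$ when the side condition $x_1^s(y)\ge\rho$ permits $y=y_{min}^s(\lambda)$ (that is, when $\rho\le x_1^s(y_{min}^s(\lambda))$) and is $R_1^s(x_1^s|_{S_1}^{-1}(\rho))$, with legs equal to $2\rho$, otherwise. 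For $\{T_2^s(y)\}$ and for $T_{reg}^s(\rho)$ the relevant circumradii are monotone on the ranges that occur, so their minima fall at the constrained endpoints; collating these with comparisons among $y_{min}^s(\lambda)$, $x_1^s|_{S_1}^{-1}(\rho)$, $y_s^s(\lambda)$, $y_b^s(\lambda)$, $\pi/4$ and $x_b^s(\lambda)$ reproduces the four cases of \eqref{eq:crlemSph}, and tracking equalities throughout yields the stated characterization of the extremal triangles.

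I expect the main obstacle to be twofold. First, the isosceles reduction: one must check that the $R$-decreasing deformation respects \emph{all} the constraints simultaneously, and since the separability and acuteness conditions couple the side lengths in a non-convex way, it is safest to lift this step essentially from the proof of Lemma~\ref{lem:densitySph}. Second, the explicit minimization of $R_1^s$: putting $R_1^s$ in closed form, differentiating, and reducing the stationarity condition to the precise cubic whose relevant root is the Cardano expression \eqref{eq:crsols} is a long (if routine) computation, and confirming that this stationary point is actually a minimum — and that no smaller circumradius occurs at a parameter-interval endpoint or on a neighbouring family, especially near the transitional values of $\rho$ — requires a careful, case-by-case sign check.
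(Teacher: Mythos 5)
Your overall strategy coincides with the paper's: existence of a minimizer by compactness; reduction to isosceles triangles by moving the apex along a curve on which the objective is constant (here the circumcircle, playing the role that the Lexell circle plays for the area); the conclusion that a minimizer is either regular of edge length $2\rho$ or has its two midlines tangent to all three caps; and then a one-variable minimization in which $\frac{d}{dy}R_1^s(y)=0$ reduces to a cubic in $\sin^2 y$ whose admissible Cardano root is $\sin y_{min}^s(\lambda)$, while $R_2^s$ is monotone, followed by the same case analysis. That part of the plan is faithful to the paper's Steps 1--6.

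There is, however, a genuine problem in your opening reduction, where you translate the hypotheses. First, on $\Sph^2$ a triangle containing its circumcenter need \emph{not} have all angles at most $\pi/2$: the triangles $T_2^s(\rho)$, which are precisely the extremal triangles in the fourth case of \eqref{eq:crlemSph}, have two obtuse base angles (the paper records their angles as $2\alpha,\pi-\beta,\pi-\beta$), so the constraint region you describe would exclude the minimizers you must eventually produce. Second, the claimed equivalence between ``$C_a$ can be separated from $C_b,C_c$ by a line missing all three interiors'' and ``the altitude from $a$ is at least $2\lambda$'' is a Euclidean identity that fails on the sphere. The correct reformulation, which is what Lemma~\ref{lem:trivial} supplies, is that the line through the midpoints of $[a,b]$ and $[a,c]$ --- a line equidistant from $a$, $b$ and $c$ --- has distance at least $\lambda$ from the vertices; it is the tangency case of \emph{this} condition, unwound with the spherical Pythagorean theorem, that yields \eqref{eq:x1s} and hence the families $T_1^s(y)$, $T_2^s(y)$ over which you later minimize. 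Since you subsequently invoke \eqref{eq:x1s} verbatim, the slip is repairable, but as written the description of the admissible set $\mathcal R$, the derivation of $\lambda<\pi/4$ and $\rho\le\pi/2-\lambda$, and the identification of the active-constraint cases do not go through; they must be redone with the midline condition in place of the altitude condition.
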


To prove these lemmas, we prove a preliminary lemma which is valid in any plane of constant curvature.

\begin{lem}\label{lem:trivial}
Let $T$ be a triangle in $\M \in \{ \Eu^2, \HH^2, \Sph^2 \}$ with vertices $a,b,c$ and containing its circumcenter. Let $C_a, C_b, C_c$ be disks of radius $\rho > 0$ centered at $a,b,c$, respectively. Assume that a line $L$ separates $C_a$ and $C_b$ from $C_c$. Then the line passing through the midpoints of $[a,c]$ and $[b,c]$ separates $C_a$ from $C_b$ and $C_c$.
\end{lem}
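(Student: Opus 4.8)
The plan is to reduce the claim, by a symmetry argument, to the single inequality $d(c,M)\ge\rho$, where $M$ is the line through the midpoints $m_1$ of $[a,c]$ and $m_2$ of $[b,c]$, and then to prove that inequality — at once in $\Eu^2$, and by a short trigonometric computation in $\HH^2$ and $\Sph^2$.

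First I would record two facts. The half-turn (rotation by $\pi$) about $m_1$ is an isometry of $\M$ carrying $M$ onto itself, interchanging the two sides of $M$, and swapping $a$ with $c$; hence $d(a,M)=d(c,M)$ and $a,c$ lie on opposite sides of $M$. The half-turn about $m_2$ gives likewise $d(b,M)=d(c,M)$ and $b,c$ on opposite sides of $M$. So $a$ and $b$ lie on one side of $M$ and $c$ on the other, and $d(a,M)=d(b,M)=d(c,M)=:t$; thus $M$ separates $C_c$ from $C_a$ and $C_b$ and avoids the interiors of all three disks as soon as $t\ge\rho$, and it remains only to prove $d(c,M)\ge\rho$. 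Secondly, since $L$ separates $\{C_a,C_b\}$ from $C_c$ and misses their interiors, $d(a,L),d(b,L),d(c,L)\ge\rho$, and since $c$ lies on the opposite side of $L$ from both $a$ and $b$, the line $L$ meets the segments $[c,a]$ and $[c,b]$, at points at distance $\ge d(c,L)\ge\rho$ from $c$; in particular $d(c,a),d(c,b)\ge 2\rho$, so $d(c,m_1),d(c,m_2)\ge\rho$.

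In $\Eu^2$ this already finishes the proof. There $M$ is the midsegment line: it is parallel to the line $\ell$ through $a$ and $b$, and $d(c,M)=\tfrac12 d(c,\ell)$. The whole segment $[a,b]$ lies on the side of $L$ containing $a$ and $b$, at distance $\ge\rho$ from $L$ (the distance to $L$ is affine along a segment disjoint from $L$), and because $T$ contains its circumcenter its angles at $a$ and $b$ are $\le\pi/2$, so the foot $f$ of the perpendicular from $c$ to $\ell$ lies in $[a,b]$. The geodesic $[c,f]$ crosses $L$, at $w$ say, so $d(c,\ell)=d(c,f)=d(c,w)+d(w,f)\ge d(c,L)+d(f,L)\ge 2\rho$; hence $t=d(c,M)=\tfrac12 d(c,\ell)\ge\rho$.

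In $\HH^2$ and $\Sph^2$ neither the identity $d(c,M)=\tfrac12 d(c,\ell)$ nor even the bound $d(c,\ell)\ge 2\rho$ need survive — on $\Sph^2$ one can have $d(c,M)<\tfrac12 d(c,\ell)$ for large triangles — so $L$ has to be exploited through the second fact directly: $L$ is a transversal of the angle of $T$ at $c$ all three of whose distances to $a$, $b$, $c$ are $\ge\rho$, whereas the three distances from $M$ to $a$, $b$, $c$ coincide. The one non-routine step, and the point I expect to be the main obstacle, is to show by the law of cosines in the relevant geometry that $M$ maximizes the minimum of the distances to $a$, $b$, $c$ over all transversals of that angle; granting it, the existence of $L$ forces $\rho\le d(c,M)=t$ and the lemma follows. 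I expect this estimate to reduce, by monotonicity in the imbalance between $d(c,a)$ and $d(c,b)$ for a fixed angle at $c$, to the isosceles case, where $M$ is perpendicular to the angle bisector and the computation is explicit.
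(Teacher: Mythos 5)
Your symmetry reduction is correct and clean: the half-turns about the two midpoints show that the midline $M$ is equidistant from $a,b,c$ with $c$ alone on one side, so the lemma is equivalent to the single inequality $d(c,M)\ge\rho$; and your Euclidean argument for that inequality (the foot of the altitude from $c$ lies in $[a,b]$ because $T$ is non-obtuse, then split $d(c,f)$ at its crossing with $L$) is complete. But for $\HH^2$ and $\Sph^2$ you have not proved the lemma. The assertion you defer --- that $M$ maximizes $\min\{d(a,L'),d(b,L'),d(c,L')\}$ over all lines $L'$ separating $c$ from $a,b$ --- is, after your reduction, exactly equivalent to the statement being proved, so ``granting it'' grants everything. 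The route you propose (law of cosines, monotonicity in the imbalance of $d(c,a)$ and $d(c,b)$, reduction to the isosceles case) is not carried out, and it is not routine: the maximality claim is \emph{false} without the circumcenter hypothesis. Already in $\Eu^2$, take $a=(0,0)$, $b=(10,0)$, $c=(-10,1)$: then $d(c,M)=1/2$, while the line $x=-5$ separates $c$ from $a,b$ with all three distances at least $5$. So any proof of the extremal property must use the circumcenter condition in an essential way, and your sketch gives no indication of where it would enter the non-Euclidean computation.

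For comparison, the paper sidesteps all trigonometry with a perturbation argument: enlarge $\rho$ to the largest radius for which a separating line exists; an extremal separating line must then either touch all three disks (in which case it passes through both midpoints, i.e.\ it \emph{is} $M$, and the conclusion follows for the original smaller $\rho$), or touch two disks at a common point (ruled out because it forces the midpoint of one side to be strictly closer to the opposite vertex than to that side's endpoints, contradicting that $T$ contains its circumcenter), or, on $\Sph^2$, touch two disks at antipodal points (ruled out by a further perturbation). If you want to salvage your approach in $\HH^2$ and $\Sph^2$, the cheapest fix is essentially to import that idea: rather than proving the global extremality of $M$ by a computation, show that a separating line with all three distances $\ge\rho$ can be moved, without decreasing the minimum distance, into the tangent-to-all-three position, and observe that the tangent-to-all-three line is $M$ itself. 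As written, the hyperbolic and spherical cases --- the only ones for which the lemma is stated beyond the classical Euclidean setting --- are a genuine gap.
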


\begin{proof}
Let $\bar{L}$ be the line passing through the midpoints of $[a,c]$ and $[b,c]$.
Clearly, it is sufficient to prove Lemma~\ref{lem:trivial} under the additional assumption that there is no $\rho' > \rho$ such that some line $L'$ separates  $C_a'$ and $C_b'$ from $C_c'$, where $C_a'$, $C_b'$ and $C_c'$ are disks of radius $\rho' > \rho$ centered at $a,b,c$, respectively.
Thus, we may assume that $L$ cannot be perturbed in such a way that it is disjoint from all of $C_a, C_b, C_c$, implying that we have one of the following.
\begin{enumerate}
\item[(a*)] All of $C_a, C_b, C_c$ touches $L$.
\item[(b*)] Two of $C_a, C_b, C_c$ touches $L$ at the same point.
\item[(c*)] $\M= \Sph^2$, and two of $C_a, C_b, C_c$ touches $L$ at two antipodal points.
\end{enumerate}
In case (a*), $L$ passes through the midpoints of $[a,c]$ and $[b,c]$, i.e $\bar{L}=L$.
In case (b*), the two disks touch $L$ from opposite sides; that is, one of them is $C_c$, and we may assume that the other one is $C_a$. Then the distance of $b$ from $L$ is strictly larger than $\rho$, implying that $L$ strictly separates the midpoint of $[b,c]$ and $b$.
Thus, the midpoint of $[c,b]$ is closer to $a$ than to $c$ and $b$, from which we readily have that $T$ does not contain its circumcenter; a contradiction.

Finally, in case (c*) we may assume that the two disks touching $L$ are $C_a$ and $C_b$. Then, by the symmetry of the configuration one can modify $L$ in such a way that the perturbed line touches $C_a$ and $C_b$ at points which are not antipodal, and thus, there is a line disjoint from all of $C_a, C_b, C_c$ that separates $C_a, C_b$ from $C_c$; a contradiction.
\end{proof}

\begin{proof}[Proof of Lemmas~\ref{lem:densitySph} and \ref{lem:tightnessSph}]
Without loss of generality, we may assume that $\lambda > 0$.

First, by Girard's theorem relating the spherical excess of a triangle to its area, $\delta(T)$ is maximal if and only if $\area(T)$ is minimal among the triangles satisfying the conditions in Lemma~\ref{lem:densitySph}.

Note that since any side of $T$ is of length at least $2\rho$, and any vertex of $T$ is at distance at least $\lambda$ from a line passing through the midpoints of two sides, by compactness, both area and circumradius are minimized for some triangles in the family of triangles satisfying the conditions in the lemmas. Without loss of generality, we may assume that $T_{\delta}$ has maximal density (minimal area) and $T_{\gamma}$ has minimal circumradius in this family. For the moment, we deal only with Lemma~\ref{lem:densitySph}, and assume that $T=T_{\delta}$.

Let $G$ denote the Lexell circle of $T$ generated by $c$, that is, let $G$ be the circle containing $-a,-b,c$. Let $H$ denote the open hemisphere that contains $c$ in its interior, and $a,b$ in its boundary. Let $G_0 = H \cap G$. It is well known that if $d \in H$, then $\area(\conv \{ a,b,d \}) = \area(\conv \{ a,b,c \})$ if and only if $d \in G_0$. Let $L$ denote the bisector of the segment $[a,b]$, and let $c_0$ denote the intersection point of $C_0$ and $L$. Without loss of generality, we may assume that $c$ is not farther from $a$ than from $b$. Now we consider the possible position of all points $d \in C_0$ with the property that $T'=\conv \{a,b,d \}$ contains its circumcenter $o'$, its edge lengths are at least $2\rho$, and there is a line that separates $C_a$ from $C_b$ and $C_c$, and a line that separates $C_b$ from $C_a$ and $C_c$.

First, the property that $T'$ contains $o'$ is satisfied if and only if for any $\{ x,y,z \} = \{ a,b,d \}$, the distance of the midpoint of $[x,y]$ from $z$ is not less than from $x$ or $y$. This condition holds if and only if $d \in G_1$ for some closed arc $G_1$ in $G_0$ symmetric to $L$. Similarly, there is a closed arc $G_2 \subset G_0$ symmetric to $L$ such that $T'$ has edge lengths at least $2\rho$ if and only if $d \in G_2$.

Finally, let $L'_a$ denote the line through the midpoints of $[a,b]$ and $[a,c]$, and note that it separates $C_a$ from $C_b$ and $C_c$. Let $L'_b$ denote the reflected copy of $L'_a$ to $L$, and note that it separates $C_b$ from $C_a$ and $C_c$. Furthermore, for any point $d$ lying on the arc of $C$ connecting $c$ and its reflected copy to $L$, the disk of radius $\lambda$ and center $d$ is separated from $C_a$ by $L'_a$, and from $C_b$ by $L'_b$. More generally, there is a closed arc $G_3$ in $G_0$, symmetric to $L$, such that $C_a$ and $C_d$ is separated from $C_b$ by a line and $C_b$ and $C_d$ are separated from $C_a$ by another line if and only if $d \in G_3$. We remark that the fact that $T$ satisfies the above three properties implies that $G_1 \cap G_2 \cap G_3$ is not empty. Since this set is a closed arc in $G_0$ symmetric to $L$, it follows that $\conv \{ a,b,c_0 \}$ contains its circumcenter, has edge lengths at least $2\rho$, and it satisfies the separability properties described in the lemma. Thus, $T$ has minimal area only if $T$ is symmetric to $L$ and it satisfies at least one of the following.
\begin{enumerate}
\item[(a**)] Its circumcenter $\bar{o}$ lies on $[a,b]$.
\item[(b**)] $d(a,c) = d(b,c) = 2\rho$.
\item[(c**)] The lines $L_a'$ and $L_b'$ touch $C_a$, $C_b$ and $C_c$.
\end{enumerate}

Consider the case that $T$ satisfies (a**) but it does not satisfy (b**) or (c**). Then slightly moving all of $a,b,c$ towards $\bar{o}$ by the same quantity yields a triangle that has strictly smaller area and still satisfies the conditions in the lemma; a contradiction. Thus, we have that $T$ satisfies (b**) or (c**). If $T$ satisfies (b**) but it does not satisfy (c**) and the distance of $a$ and $b$ is greater than $2\rho$, then we may decrease the angle at $c$ while keeping the distance of $c$ from $a$ and $b$ fixed to obtain a similar contradiction. Hence, $T$ satisfies (c**), or $T$ is a regular triangle of edge length $2\rho$.

We observe that the same consideration can be carried out under the assumption that $T=T_{\gamma}$, in which the circumcircle of $T$ plays the role of the Lexell circle. Thus, we have that any triangle $T$ with minimal circumradius and satisfying the conditions in Lemma~\ref{lem:tightnessSph} either satisfies (c**), or it is a regular triangle of edge length $2\rho$.
It is also worth noting that any isosceles triangle satisfying the required conditions has sides of lengths at least $2\rho$. Thus, it is easy to show that if $T_{reg}^s(\rho)$ satisfies the conditions of the lemmas, then this triangle has minimal area and minimal circumradius.

From now on, we assume that $T$ is a triangle satisfying (c**) and the conditions of the lemmas, and investigate the properties of $T$.

Let the angles of $T$ be $2\alpha, \beta, \beta$ and its edge lengths be $2x, 2x, 2y$. It can be shown that in that case the triangle with angles $2\alpha, \pi-\beta, \pi-\beta$ and edge lengths $2y, \pi-2x, \pi-2x$ also satisfies (c**).

\emph{Step 1}: computing the edge lengths of the triangles satisfying (c**).

First, we note that $\lambda \leq \frac{\pi}{4}$ holds since there are two separating great circles that divide $\Sph^2$ into four lunes.

Let $T$ be an isosceles spherical triangle with edges of lengths $2x,2x,2y$ ($0 < x < y < \frac{\pi}{2}$), apex $p$, and vertices $q_1,q_2$ on its base. Assume that the line $L$ through the midpoints $m, m'$ of $[q_,q_2]$ and $[p,q_1]$, respectively, touches and separates the spherical caps of radius $\lambda$ centered at $p,q_1$.
Let the orthogonal projections of $p$ and $q_1$ on $L$ be $\bar{p}$ and $\bar{q}_1$, respectively. Let $w$ ,$z$ and $t$ be the lengths of the arcs $[m,m']$, $[\bar{p},m']$, and $[m,p]$, respectively. By the spherical Pythagorean Theorem, we have
\begin{eqnarray*}
\cos (2x) & = & \cos y \cos t,\\
\cos x & = & \cos \lambda \cos z,\\ 
\cos y & = & \cos \lambda \cos (w-z),\\
\cos t & = & \cos \lambda \cos (w+z),
\end{eqnarray*}
 From this we obtain that
\[
1 = \cos^2 w + \sin^2 w =  \frac{\left( \cos y + \cos t \right)^2}{4 \cos^2 \lambda \cos^2 z} + \frac{\left( \cos y - \cos t \right)^2}{4 \cos^2 \lambda \sin^2 z} = 
\frac{\left( \cos y + \cos t \right)^2}{4 \cos^2 x} + \frac{\left( \cos y - \cos t \right)^2}{4 (\cos^2 \lambda - \cos^2 x)},
\]
which yields that
\[
4 \cos^2 x (\cos^2 \lambda - \cos^2 x) \cos^2 y  = \left( \cos^2 y + \cos t \cos y \right)^2 (\cos^2 \lambda - \cos^2 x)  +
\left( \cos^2 y - \cos t \cos y \right)^2 \cos^2 x =
\]
\[
 \left( \cos^2 y + \cos (2x) \right)^2 (\cos^2 \lambda - \cos^2 x)  +
\left( \cos^2 y - \cos (2x) \right)^2 \cos^2 x =
\]
\[
 \left( \cos^2 y + \cos (2x) \right)^2 \cos^2 \lambda - 4 \cos^2 y \cos (2x) \cos^2 x.
\]
Since $\cos^2 \lambda - \cos^2 x + \cos (2x) = \cos^2 \lambda - \sin^2 x$, from this it follows that
\[
4 \cos^2 x (\cos^2 \lambda - \sin^2 x) \cos^2 y = \cos^4 y \cos^2 \lambda + 2 \cos^2 y \cos (2x) \cos^2 \lambda + \cos^2 (2x) \cos^2 \lambda.
\]
Thus,
\[
- 4 \cos^2 x \sin^2 x \cos^2 y = \cos^4 y \cos^2 \lambda - 2 \cos^2 y \cos^2 \lambda + \cos^2 (2x) \cos^2 \lambda, \hbox{ implying}
\]
\[
\sin^2 (2x) (\cos^2 \lambda - \cos^2 y) = \sin^4 y \cos^2 \lambda .
\]
As $0 < \lambda \leq \frac{\pi}{4}$, this yields that
\begin{equation}\label{eq:optimal}
\sin (2x) = \frac{\cos \lambda \sin^2 y}{\sqrt{\cos^2 \lambda - \cos^2 y}}.
\end{equation}
Thus, for any $0 < \lambda \leq \frac{\pi}{4}$, there is a triangle $T$ satisfying (c) if and only if $\arcsin \tan \lambda \leq y  < \frac{\pi}{2}$, and in this case the solutions, up to congruence, are $T_1^s(y)$ and $T_2^s(y)$.

\emph{Step 2}: Finding the values of $y$ such that $T_1^s(y)$ or $T_2^s(y)$ contains its circumcenter.

Note that if the height of an isosceles triangle $T$ corresponding to its base is at least $\frac{\pi}{2}$, then $T$ contains its circumcenter; in other words, $T_2^s(y)$ contains its circumcenter for all values of $y$. On the other hand, the triangle $T_1^s(y)$ contains its circumcenter if and only if the height corresponding to its base is of length at least $y$. Let $m$ denote the length of this height. Then, by the spherical Pythagorean theorem, we have $\cos m = \frac{\cos 2x_1^s(y)}{\cos y}$, implying that $T_1(y)$ contains its circumcenter if and only if $\cos 2x_1^s(y) \leq \cos ^2 y$. Since $x_1^s(y) \leq \frac{\pi}{4}$ and by (\ref{eq:optimal}), the latter inequality is equivalent to the inequality
\[
\sqrt{1- \frac{\cos^2 \lambda \sin^4 y}{\sqrt{\sin^2 y - \sin^2 \lambda}}} \leq \left( 1- \sin^2 y \right)^2.
\]
Since $0 < \lambda < \frac{\pi}{4}$, an elementary computation shows that this inequality is satisfied if and only if $0 \leq \sin^2 y \leq 2 \sin^2 \lambda$, implying that $\arcsin \tan \lambda \leq y \leq \arcsin (\sqrt{2} \sin \lambda)$. We note that $\arcsin (\sqrt{2} \sin \lambda) \geq \arcsin \tan \lambda$ is satisfied for all $0 \leq \lambda \leq \frac{\pi}{4}$.

\emph{Step 3}: Finding the relations between the lengths of the sides of $T_1^s(y)$, $T_2^s(y)$.

First, we consider $x_1^s(y)$ and investigate the inequality $x_1^s(y) \geq y$. Then we have $y \leq \frac{\pi}{4}$, and we need to solve the inequality
\[
\sin (2y) \leq \sin (2x_1^s)  = \frac{\cos \lambda \sin^2 y}{\sqrt{\sin^2 y - \sin^2 \lambda}}
\]
under the condition that $0 \leq x_1^s,y \leq \frac{\pi}{4}$. In this case both sides are nonnegative, and we can write the inequality as
\[
4 \sin^2 y (1-\sin^2 y) \leq \frac{(1-\sin^2 \lambda) \sin^4 y}{\sin^2 y - \sin^2 \lambda}.
\]
Multiplying both sides with the denominator and simplifying, we obtain that
\[
4(1-\sin^2 y)(\sin^2 y - \sin^2 \lambda) \leq (1-\sin^2 \lambda) \sin^2 y,
\]
and by algebraic transformations we obtain the inequality
\[
4 \sin^4 y -(3+5 \sin^2 \lambda) \sin^2 y + 4 \sin^2 \lambda \geq 0.
\]
For $\sin^2 y$ this inequality is a quadratic inequality, the discriminant of which is $9-34 \sin^2 \lambda + 25\sin^4 \lambda$. On the interval $\lambda \in [0,\pi/4]$, this expression is nonnegative if and only if $0 \leq \lambda \leq \arcsin (3/5)$. On the other hand, since by our conditions $\frac{\pi}{4} \geq y \geq \arcsin \tan \lambda$, we have $\sin \lambda \leq \frac{1}{\sqrt{3}} < \frac{3}{5}$. Thus, $x_1^s(y) \geq y$ implies that $0 \leq \lambda \leq \arcsin \frac{1}{\sqrt{3}}$, and the discriminant of the above inequality is positive. In this case the two roots are
\[
y_s^s(\lambda) = \arcsin \sqrt{\frac{3+5 \sin^2 \lambda - \sqrt{9-34 \sin^2 \lambda + 25\sin^4 \lambda}}{8}},
\]
\[
y_b^s(\lambda) = \arcsin \sqrt{\frac{3+5 \sin^2 \lambda + \sqrt{9-34 \sin^2 \lambda + 25\sin^4 \lambda}}{8}},
\]
where the above expressions can be shown to exist. Furthermore, numeric computations show that we have $\arcsin \tan \lambda \leq y_s^s(\lambda) \leq y_b^s(\lambda) \leq \frac{\pi}{2}$, $y_b^s(\lambda) \geq \frac{\pi}{4}$ for all values of $\lambda$, and $y_s^s(\lambda) \leq \frac{\pi}{4}$ if and only if $0 \leq \lambda \leq \arcsin \frac{1}{\sqrt{3}}$.

The investigation of the inequality $x_2^s(y) \leq y$ can be done in a similar way, and we obtain the following.

\begin{itemize}
\item If $0 \leq \lambda \leq \arcsin \frac{1}{\sqrt{3}}$ and $\arcsin \tan \lambda \leq y \leq y_s$, then $y \leq x_1(y) \leq x_2(y)$.
\item If $0 \leq \lambda \leq \arcsin \frac{3}{5}$ and $y_s \leq y \leq y_b$, then $x_1(y) \leq y \leq x_2(y)$.
\item Otherwise $x_1(y) \leq x_2(y) \leq y$.
\end{itemize}

Note that the function $y_s^s$ is strictly increasing whereas $y_b^s$ is strictly decreasing on its domain (see Figure~\ref{fig:x1x2y}).

\begin{figure}[ht]
  \begin{center}
  \includegraphics[width=0.5\textwidth]{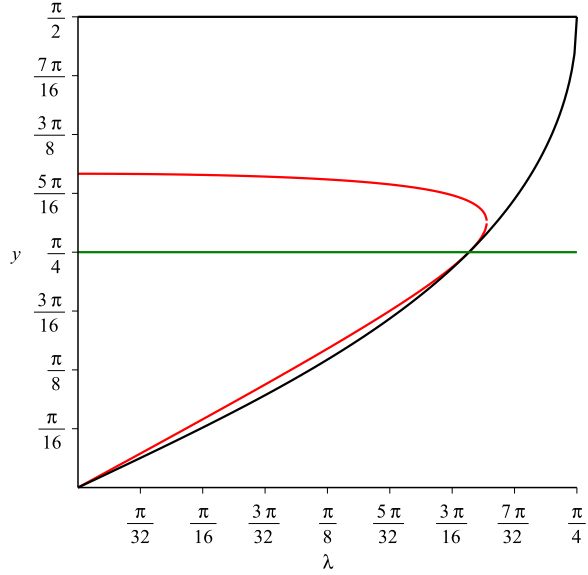} 
 \caption{The union of the graphs of the functions $y_s^s$ and $y_b^s$ decomposes the region consisting of the points with the possible values of $(\lambda, y)$ into three connected components, described in the previous list, which determine the relative position of $y$, $x_1^s(y)$ and $x_2^s(y)$. The region of the possible points is bounded by black curves. The red curve is the union of the graphs of $y_s^s$ and $y_b^s$. The green segment is the curve $y=\frac{\pi}{4}$.}
\label{fig:x1x2y}
\end{center}
\end{figure}

\begin{rem}\label{rem:regulartriangle}
Note that by the computations in Step 3, $T_1(y)^s$ or $T_2^s(y)$ is a regular triangle of edge length $2y$ if and only if $y = x_1^s(y)$ or $y=x_2^s(y)$. This yields that $T_{reg}^s(\rho)$ satisfies the conditions of the lemmas if and only if $y_s^s(\lambda) \leq \rho \leq y_b^s(\lambda)$.
\end{rem}

\emph{Step 4}: Computing the areas and the circumradii of $T_1^s(y)$ and $T_2^s(y)$.

For $i=1,2$, let $R_i^s(y)$ denote the the circumradius of $T_i^s(y)$.
Let the angle of $T_1^s(y)$ at $p$, $q_1$ and $q_2$ be denoted by $2\alpha$, $\beta$, $\beta$, respectively, and note that this implies that the angles of $T_2^s(y)$ at $p$, $-q_1$, $-q_2$ are $2\alpha$, $\pi-\beta$, $\pi-\beta$, respectively.
Then, by Girard's theorem, for $\area (T_1^s(y))$ we have
\[
\cos \frac{\area (T_1^s(y))}{2} = \sin (\gamma + \delta).
\]
Now, applying the spherical laws of sines and cosines, we obtain that $\sin \alpha = \frac{\sqrt{\sin^2y - \sin^2 \lambda}}{\sin y \cos \lambda}$, $\cos \alpha = \tan \lambda \cot y$, $\sin \beta = \frac{\tan \lambda}{\sin y}$ and$\cos \beta = \frac{\sqrt{\sin^2 y - \tan^2 \lambda}}{\sin y}$. Using trigonometric identities, from this we obtain
\[
\cos \frac{\area(T_1^s(y))}{2} = \frac{\sqrt{\sin^2 y - \sin^2 \lambda} \sqrt{\sin^2 y \cos^2 \lambda - \sin^2 \lambda} + \sin^2 \lambda \cos y}{\sin^2 y \cos^2 \lambda},
\]
and a similar computation yields
\[
\cos \frac{\area(T_2^s(y))}{2} = \frac{-\sqrt{\sin^2 y - \sin^2 \lambda} \sqrt{\sin^2 y \cos^2 \lambda - \sin^2 \lambda} + \sin^2 \lambda \cos y}{\sin^2 y \cos^2 \lambda}.
\]

Next, the circumradius $R$ of a spherical triangle with angles $\alpha, \beta, \gamma$ satisfies 
\[
\cot R = \sqrt{\frac{\sin(\alpha-P) \sin (\beta - P) \sin(\gamma-P)}{\sin P}},
\]
where $P= \frac{1}{2}(\alpha+\beta+\gamma-\pi)$ (see \cite{SV12}).
Thus, $R_1^s(y)$ satisfies
\[
\cot R_1^s(y) = \sqrt{\frac{\sin\left(\alpha+\beta - \frac{\pi}{2}\right) \sin^2 \left(\frac{\pi}{2}-\alpha \right)}{\sin\left(\alpha+\beta-\frac{\pi}{2} \right)}}=\frac{\cos \alpha \cos(\beta-\alpha)}{\sqrt{-\cos(\beta-\alpha) \cos(\beta+\alpha)}}.
\]
By our formulas for $\sin\alpha, \cos \alpha, \sin \beta, \cos \beta$ and using trigonometric identities, we obtain that
\[
-\cos(\beta-\alpha) \cos(\beta+\alpha) = \tan^2 \lambda.
\]
Substituting back and using the addition formula for the cosine of the difference of two angles, we obtain
\[
\cot R_1^s(y) = \frac{\sin \lambda}{\cos^2 \lambda} \left( \frac{\cos^2 y}{\sin^3 y} \sqrt{\sin^2 y \cos^2 \lambda - \sin^2 \lambda} + \frac{\cos y}{\sin^3 y} \sqrt{\sin^2 y - \sin^2 \lambda} \right),
\]
and a very similar computation yields
\[
\cot R_2^s(y) = \frac{\sin \lambda}{\cos^2 \lambda} \left( -\frac{\cos^2 y}{\sin^3 y} \sqrt{\sin^2 y \cos^2 \lambda - \sin^2 \lambda} + \frac{\cos y}{\sin^3 y} \sqrt{\sin^2 y - \sin^2 \lambda} \right).
\]

\emph{Step 5}: Investigating the monotonicity properties of the areas and the circumradii of $T_1^s(y)$ and $T_2^s(y)$.

Here we present the computation for $T_1^s(y)$.

Denoting $\sin \lambda$ and $\sin y$ by $L$ and $Y$, respectively, we obtain that the value of $\cos \frac{\area (T_1^s(y))}{2}$ can be written as
\[
f(Y)= \frac{\sqrt{Y^2-L^2}\sqrt{Y^2(1-L^2)-L^2} + L^2 \sqrt{1-Y^2}}{Y^2 \sqrt{1-L^2}}.
\]
Then we have
\[
f'(Y) = \frac{L^2 \left( A-B\right)}{\sqrt{Y^2-L^2} \sqrt{Y^2(1-L^2)-L^2} \sqrt{1-Y^2} (1-L^2)},
\]
where $A=(2Y^2-2L^2-Y^2L^2)\sqrt{1-Y^2}$ and $B=(2-Y^2)\sqrt{Y^2-L^2}\sqrt{Y^2(1-L^2)-L^2}$ are positive for any $0 < L < \frac{1}{\sqrt{2}}$ and $ \frac{L}{\sqrt{1-L^2}} < Y < 1$. On the other hand,
\[
A^2-B^2 = -Y^6 (1-L^2)(Y^2-2L^2),
\]
implying that $\area (T_1^s(y))$ is strictly decreasing on the interval $y \in [\arctan \sin \lambda, \arcsin (\sqrt{2} \sin \lambda)]$, and it is strictly increasing on $y \in \left[\arcsin (\sqrt{2} \sin \lambda), \frac{\pi}{2}\right]$.
Using a similar computation we have that $\area (T_2^s(y))$ is strictly increasing on its whole domain.

Next, we present the computation for $R_1^s(y)$. Using the notation $L = \sin \lambda$ and $Y = \sin y$, we have that $\cot R_1^s(y)$ is equal to
\[
g(Y)= \frac{1-Y^2}{Y^3}\sqrt{Y^2(1-L^2)-L^2}+\frac{\sqrt{1-Y^2}}{Y^3}\sqrt{Y^2-L^2}.
\]

We find the maximum of this expression for $\frac{L}{\sqrt{1-L^2}} < Y < 1$, where $L > 0$ is fixed.
To do it, we intend to examine first the condition when the squares of the derivatives of the first and the second members of $g(Y)$ are equal.
With the notation $Z=Y^2-\frac{5L^2}{3}$, this leads to the cubic equation $Z^3 + \left( 2L^2 - \frac{10L^4}{3} \right)Z + \frac{L^4}{3}  -\frac{25}{27} L^6=0$.
The discriminant of this equation is negative on the interval $L \in \left(0, \frac{1}{\sqrt{2}} \right)$, implying that it has one real root.
This root yields the unique positive solution $\sin y_{min}^s(\lambda))$ for $Y$. Numeric computations show that  
for any $0 < \lambda \leq \frac{\pi}{4}$,
\[
\arcsin \tan \lambda \leq y_{min}^s(\lambda) \leq \arcsin (\sqrt{2} \sin \lambda),
\]
and for any $0 < \lambda \leq \arcsin \frac{3}{5}$, we have $x_1^s(y_{min}^s(\lambda)) < y_s^s(\lambda) \leq y_{min}^s(\lambda)$, with equality on the right if and only if $\lambda = \arcsin \frac{1}{\sqrt{3}}$. In particular, checking the sign of $g'(Y)$, we obtain that
$R_1^s(y)$ is strictly decreasing on the interval $[\arcsin \tan \lambda, y_{min}^s(\lambda)]$ and strictly increasing on $\left[y_{min}^s(\lambda), \pi/2 \right)$.

A similar computation shows that $R_2^s(y)$ is strictly increasing on the interval $\left[ \arcsin \tan \lambda , \frac{\pi}{2} \right)$.

\emph{Step 6}: A case analysis to prove the lemmas.


We describe the analysis to find the minimum area triangles from amongst $T_1^s(y)$ and $T_2^s(Y)$ satisfying the conditions of Lemma~\ref{lem:densitySph}. To finish the proof of Lemma~\ref{lem:tightnessSph}, we can use a similar argument.

\textbf{Case 1}: $0 < \rho \leq \frac{\pi}{4}$.
Then $x_2(y) \geq \rho$ is satisfied for all values of $y$, and thus, $T_2^s(y)$ satisfies the conditions if and only if $y \geq \max \{ \rho, \arcsin \tan \lambda \}$.

\emph{Subcase 1.1}: $\lambda \leq \rho \leq y_s^s(\lambda)$.
By Step 2, $T_1^s(y)$ contains its circumcenter of and only if $y \leq \arcsin (\sqrt{2} \sin \lambda)$, and $T_2^s(y)$ contains it for all values of $y$.
Furthermore, the sides of $T_1^s(y)$ are at least $2\rho$ if and only if $y \geq \rho$, and $x_1(y) \geq \rho$.
By Step 3, these inequalities are equivalent to $y \geq \max \{ \rho, \arcsin \tan \lambda \}$, and $y \leq x_1^s|_{S_1}^{-1}(\rho)$ or $y \geq x_1^s|_{S_2}^{-1}(\rho)$.
Since $y \leq \arcsin (\sqrt{2} \sin \lambda)$, $y \geq x_1^s|_{S_2}^{-1}(\rho)$ is not satisfied for any value of $y$.
Thus, we need to find the minimum area triangle from amongst the $T_1^s(y)$ with $y \geq \max \{ \rho, \arcsin \tan \lambda \}$ and $y \leq x_1^s|_{S_1}^{-1}(\rho)$, and the $T_2^s(y)$ with $y \geq \max \{ \rho, \arcsin \tan \lambda \}$.
Observe that 
by the monotonicity properties of the function $x_1^s|_{S_1}^{-1}$ and the fact that $y_s^s(\lambda) < \arcsin (\sqrt{2} \sin \lambda)$, we have that
\[
x_1^s|_{S_1}^{-1}(\rho) \geq x_1^s|_{S_1}^{-1}(y_s^s(\lambda)) = y_s^s(\lambda) \geq \max \{ \arcsin \tan \lambda, \rho \}. 
\]
Thus, by Step 5, among the above triangles $T_1^s(x_1^s|_{S_1}^{-1}(\rho))$ has minimal area.

\emph{Subcase 1.2}: $y_s^s(\lambda) < \rho \leq \frac{\pi}{4}$.
Here $T_{reg}^s(\rho)$ satisfies the conditions, and thus, this triangle has minimal area.

\textbf{Case 2}: $\frac{\pi}{4} < \rho < \frac{\pi}{2}-\lambda$.
Note that in this case no triangle $T_1^s(y)$ satisfies the conditions. We also note that $y_b^s(\lambda) < \frac{\pi}{2}-\lambda$ for any value of $\lambda$.

\emph{Subcase 2.1}: $\lambda \leq \arcsin \frac{3}{5}$ and $y_s^s(\lambda) \leq \rho \leq y_b^s(\lambda)$. (Here we note that if $\lambda \leq \arcsin \frac{1}{\sqrt{3}}$, then $y_s^s(\lambda) \leq \rho$ holds for all $\rho \geq \frac{\pi}{4}$.)

Like in Subcase 1.2, $T_{reg}^s(\rho)$ satisfies the conditions, and thus, this triangle has minimal area.

\emph{Subcase 2.2}: if $\lambda > \arcsin \frac{3}{5}$, or $\arcsin \frac{1}{\sqrt{3}} \leq \lambda \leq \arcsin \frac{3}{5}$ and  $\rho < y_s(\lambda)$, or $0 \leq \lambda < \arcsin \frac{3}{5}$ and $\rho > y_b(\lambda)$. Then $T_{reg}^s(\rho)$ does not satisfy the conditions.

The triangle $T_2^s(y)$ satisfies the conditions if and only if $y \geq \max \{ \arcsin \tan \lambda, \rho\}$ and $x_2^s|_{S_1}^{-1}(\rho) \leq y \leq x_2^s|_{S_2}^{-1}(\rho)$. By our conditions, $x_2^s(\rho) < \rho$, implying that $\rho < x_2^s|_{S_1}^{-1}(\rho)$ or $\rho > x_2^s|_{S_2}^{-1}(\rho)$ if $\rho \leq \arcsin (\sqrt{2} \sin \lambda)$ or $\rho \geq \arcsin (\sqrt{2} \sin \lambda)$, respectively. Since $\rho \leq \frac{\Pi}{2}-\lambda$, and $y_b(\lambda)= \arcsin (\sqrt{2} \sin \lambda) = \arcsin (\sqrt{2} \sin \lambda)$ if $\lambda=\arcsin \frac{1}{\sqrt{3}}$, we have that $\rho > x_2^s|_{S_2}^{-1}(\rho)$ if and only if $0 \leq \lambda < \arcsin \frac{1}{\sqrt{3}}$ and $\rho > y_b^s(\lambda)$, and $\rho < x_2^s|_{S_2}^{-1}(\rho)$ otherwise (see Figure~\ref{fig:bigrhoregions}). In both cases, by Step 5, the solution is $T_2^s(\rho)$.

\begin{figure}[ht]
  \begin{center}
  \includegraphics[width=0.5\textwidth]{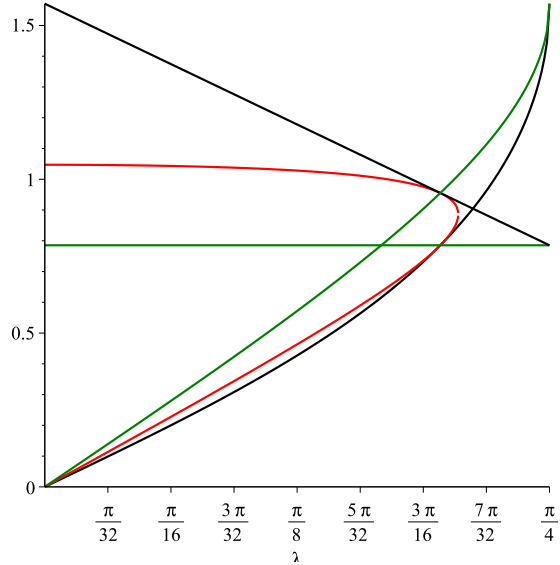} 
 \caption{An illustration for Subcase 2.2. The boundary of the region of the possible parameter values is drawn with black color. The red curve is the union of the graphs of $y_s^s$ and $y_b^s$. The green curves are the segment $y=\frac{\pi}{4}$, and the curve $y=\arcsin(\sqrt{2}\sin (\lambda))$. Observe that the last curve passes through the intersection of the segment $y=\frac{\pi}{2}-\lambda$ and the curve $y=y_b(\lambda)$.}
\label{fig:bigrhoregions}
\end{center}
\end{figure}

\textbf{Case 3}: $\rho > \frac{\pi}{2}-\lambda$. 
In this case $x_2^s(y) < \rho$ for all values of $y$, implying that there is no triangle satisfying the conditions.
\end{proof}

\section{Proofs of Theorems 1,2,6,7}\label{Euclidean etc}

The proofs in question are straightforward modifications of the proofs of Theorems 4 and 5, and are based on the use of the refined Moln\'ar decomposition.
In particular, let $\M \in \{ \Eu^2 , \HH^2 \}$. Like in Section~\ref{sec:proof45}, we assume that $\F$ is a $(2 R_{\rho})$-locally $\lambda$-separable packing of spherical caps of radius $\rho$ in $\M$, where $R_{\rho}= \arcsinh (\sqrt{2} \sinh \rho)$ if $\M = \HH^2$, and $R_{\rho} = \sqrt{2} \rho$ if $\M = \Eu^2$. In addition, we assume that $\F$ is \emph{$(2R_{\rho})$-saturated} in  $\M$, i.e., every point $p$ is at distance at most $2R_{\rho}$ from the center of a cap in $\F$. Thus, the radius of any circumdisk of the cells in the Delaunay decomposition of the set $X$ of the centers of the elements of $\F$ is at most $2 R_{\rho}$. 
Then the refined Moln\'ar decomposition of $\M$ defined by $X$ consists of two types of cells $P$:
\begin{itemize}
\item[(i)] $P$ has circumradius at most $R_{\rho}$, in this case we say that $P$ is type 1, or
\item[(ii)] it is of the form $P=\cl \left( \conv \{ v,c_i,c_j\} \setminus \conv \{ v',c_i,c_j\} \right)$, where $c_i,c_j \in X$, $v$ is the circumcenter of a Delaunay cell with $c_i$ and $c_j$ as vertices and with circumradius at least $R_{\rho}$, and $d(v',c_i) = d(v',c_j)$. In this case we say that $P$ is type 2.
\end{itemize}

Like in the spherical case, any type 1 cell is either a triangle containing its circumcenter, or it can be decomposed into two triangles containing their circumcenters. Thus, we assume that any type 1 cell is a triangle containing its circumcenter.

To prove the theorems for type 2 cells, we need the following lemma, which is a variant of \cite[Lemma 3]{BL23}.

\begin{lem}\label{lem:BL23nonSph}
Let $Q$ be an isosceles triangle in $\M$, with vertices $q_1, q_2, p$, where $p$ is the apex. Let the length of the legs of $Q$ be $x$, and that of the base be $y$, where $\rho \leq \frac{y}{2}< x$ . For $i=1,2$, let $S_i$ denote the disk of radius $\rho$, centered at $q_i$. Let
\[
f(x,y) =
\frac{\area(S_1 \cup S_2)}{\area(Q)}
\]
denote the density of the packing $\{ S_1, S_2 \}$ in $Q$ as a function of $x$ and $y$. Then $f(x,y)$ is a strictly decreasing function of both $x$ and $y$.
\end{lem}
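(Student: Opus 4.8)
The plan is to reduce the claim to a one-variable monotonicity statement by exploiting the structure of the density function, exactly in the spirit of \cite[Lemma 3]{BL23}, but now in the Euclidean or hyperbolic setting. First I would note that since $\{S_1,S_2\}$ is a packing, we have $y\ge 2\rho$, and the two disks of radius $\rho$ centered at $q_1,q_2$ are disjoint (interiors), so $\area(S_1\cup S_2)=2\area(S_1)=2\pi\rho^2$ in $\Eu^2$, and $\area(S_1\cup S_2)=2\cdot 2\pi(\cosh\rho-1)$ in $\HH^2$; in particular the numerator of $f(x,y)$ is a constant $c_\rho>0$ depending only on $\rho$. Hence $f(x,y)=c_\rho/\area(Q)$, and proving that $f$ is strictly decreasing in each variable is equivalent to proving that $\area(Q)$ is strictly increasing in each of $x$ and $y$ on the relevant domain $\{\rho\le y/2<x\}$.

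Next I would establish the monotonicity of $\area(Q)$ in each variable separately. Fixing $y$ and increasing $x$: the base $[q_1,q_2]$ stays put while the apex $p$ moves farther from the base along the perpendicular bisector (the height is a strictly increasing function of $x$ for fixed $y$, since in a triangle of constant curvature the foot of the altitude from $p$ to $[q_1,q_2]$ is the midpoint and the altitude length $h$ satisfies a Pythagoras-type relation monotone in $x$). Enlarging the triangle in this way strictly enlarges it as a set — the new triangle contains the old one — so by monotonicity of area under inclusion, $\area(Q)$ strictly increases. Fixing $x$ and increasing $y$: this is the more delicate direction because increasing the base length while keeping the legs fixed is not a set inclusion. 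Here I would instead use an explicit formula for the area. In $\Eu^2$, $\area(Q)=\frac{y}{2}\sqrt{x^2-y^2/4}$ (base times half the height), and a direct computation of $\partial/\partial y$ shows it is positive precisely when $y^2/4<x^2-y^2/4$, wait — more carefully, $\frac{\partial}{\partial y}\big(\frac{y}{2}\sqrt{x^2-y^2/4}\big)=\frac{1}{2}\sqrt{x^2-y^2/4}-\frac{y^2}{8\sqrt{x^2-y^2/4}}=\frac{4x^2-2y^2}{8\sqrt{x^2-y^2/4}}$, which is positive exactly when $y<\sqrt 2 x$; but the constraint $\rho\le y/2<x$ together with $R_\rho=\sqrt2\rho$ forces $y/2<x$, i.e.\ $y<2x$, which is weaker than $y<\sqrt2 x$, so a small extra argument is needed to see why we never reach $y\ge\sqrt2 x$ in the applications — in fact this is exactly the reason the paper works with $R_\rho$-locally $\lambda$-separable packings and type-1/type-2 cells, so that type-2 cells arising from Delaunay circumcenters at distance $\ge R_\rho$ keep the isosceles triangle within the range $y\le\sqrt2 x$. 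For $\HH^2$ I would use the hyperbolic area formula for an isosceles triangle (e.g.\ via the angle sum / defect, or an explicit integral), differentiate in $y$, and reduce the sign question to a hyperbolic-trigonometric inequality valid on $y/2<x$ with $x\le R_\rho$-type bounds.

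The main obstacle, as indicated, will be the $y$-direction in the hyperbolic plane: the area as a function of the base length (with legs fixed) is genuinely increasing only on an initial range, and one must verify that the geometric constraints coming from the refined Moln\'ar decomposition (namely that a type-2 cell is built from an isosceles triangle whose base does not exceed the leg by too much, more precisely that the relevant circumradius bound $R_\rho=\arcsinh(\sqrt2\sinh\rho)$ keeps $y$ below the critical value $\sqrt2 x$-analogue) place us strictly inside that range. I would handle this by computing the critical base length $y^\ast(x)$ at which $\partial_y\area(Q)=0$, showing $y^\ast(x)$ corresponds to the regular spherical/hyperbolic quadrangle situation (this is why $R_\rho$ is defined as the circumradius of the regular quadrangle of edge $2\rho$), and checking that the defining inequality $\rho\le y/2$ combined with the circumradius constraint forces $y<y^\ast(x)$ strictly. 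Once monotonicity of $\area(Q)$ in both variables is established on this domain, strict monotonicity of $f=c_\rho/\area(Q)$ follows immediately, completing the proof of Lemma~\ref{lem:BL23nonSph}.
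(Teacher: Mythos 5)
Your argument has a genuine gap, and it originates in how you read the density. You take the displayed formula at face value, so that the numerator $\area(S_1\cup S_2)$ is a constant depending only on $\rho$ and the lemma collapses to the claim that $\area(Q)$ is strictly increasing in $x$ and in $y$. But your own computation then shows that this claim is false on the stated domain: in $\Eu^2$, $\partial_y\area(Q)$ changes sign at $y=\sqrt2\,x$, while the hypotheses allow any $y$ with $\rho\le y/2<x$. The resolution is that ``the density of the packing $\{S_1,S_2\}$ in $Q$'' means the fraction of $Q$ covered, i.e.\ the numerator must be $\area\bigl((S_1\cup S_2)\cap Q\bigr)$; this is how density is used everywhere else in the paper (compare $\delta(P)=(1-\cos\rho)\varphi/\area(P)$ in Section~\ref{sec:proof45}, whose numerator is the area of the sectors of the caps lying \emph{inside} $P$), and it is the only reading under which the lemma holds. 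That numerator equals twice the area of a circular sector of radius $\rho$ and angle $\beta$, where $\beta$ is the base angle of $Q$; it shrinks as the triangle flattens, and this is exactly what restores monotonicity on the whole range. In $\Eu^2$ one gets $f(x,y)=\rho^2\arccos\bigl(\tfrac{y}{2x}\bigr)\big/\bigl(\tfrac{y}{4}\sqrt{4x^2-y^2}\bigr)$, and setting $\theta=\arccos\bigl(\tfrac{y}{2x}\bigr)$ both partial derivatives are negative by the elementary inequalities $\tfrac12\sin2\theta<\theta$ and $\theta\cos2\theta<\tfrac12\sin2\theta$ on $(0,\pi/2)$; in $\HH^2$ the numerator is $2\beta(\cosh\rho-1)$, the denominator is the angle defect, and the analogous computation is the ``slight modification of the proof of Lemma 3 of \cite{BL23}'' that the paper (which writes out no more than this) alludes to. Note also that under the correct reading your set-inclusion argument for the $x$-direction is no longer sufficient by itself: moving the apex away from the base increases $\beta$ and hence the numerator as well, so the ratio still has to be estimated.

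Your attempted rescue of the constant-numerator reading does not close the gap. Restricting to $y<\sqrt2\,x$ would prove a strictly weaker statement than the one quantified over all $\rho\le y/2<x$, and that restriction is not in fact supplied by the refined Moln\'ar decomposition: a type 2 cell can arise from a Delaunay cell of circumradius $x$ just above $R_\rho$ whose separating side subtends an angle $\gamma$ just above $\pi/2$ at the opposite vertex, and then $y=2x\sin\gamma$ is close to $2x$, well above $\sqrt2\,x$, while all Delaunay edges still have length at least $2\rho$. So the critical value $\sqrt2\,x$ is genuinely crossed in the intended applications, and the lemma must be (and, with the correct numerator, is) true there.
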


This lemma is straightforward to prove if $\M = \Eu^2$, using an elementary computation. If $\M = \HH^2$, then a slight modification of the proof of Lemma 3 of \cite{BL23} can be applied, which we leave to the reader.
Thus, it is sufficient to prove the assertion for type 1 triangles.

First, we prove Theorem 1 for type 1 triangles. Assume that $\M = \Eu^2$ and $\rho=1$. Let $T$ be any type 1 triangle. Since the sides of $T$ has lengths at least $2$, it follows that $\area(T) \geq \frac{\sqrt{3}}{4}$. Similarly, since the Euclidean disks centered at the vertices of $T$ are a $\lambda$-separable system, we have that at least two heights of $T$ are at least $2\lambda$, implying that $\area(T) \geq \frac{\lambda}{2}$. Thus, $\area(T) \geq \max \left\{ \frac{\sqrt{3}}{4}, \frac{\lambda}{2} \right\}$. On the other hand, if $0 \leq \lambda \leq \frac{\sqrt{3}}{2}$, then the packing of unit disks whose Delaunay cells are regular triangles is a $\lambda$-separable packing, and if $\frac{\sqrt{3}}{2} < \lambda \leq 1$, then the same is true for the packing of unit disks whose Delaunay cells are isosceles 
triangles whose legs are of length $2$ and the length of the corresponding heights is $2 \lambda$, i.e. if they are congruent to $T^e(\sqrt{2-2\sqrt{1-\lambda^2}})$. This proves Theorem 1. To prove Theorems 2, 6 and 7, it is sufficient to prove the following lemmas.

\begin{lem}\label{lem:tightnessEu}
Let $T$ be a non-obtuse triangle in $\Eu^2$ with edge lengths at least $2$, and having two heights of lengths at least $2\lambda$. Let $R(T)$ denote the circumradius of $T$.
Then
\begin{equation}\label{eq:tightness2}
R(T) \geq
\left\{
\begin{array}{l}
\frac{2}{\sqrt{3}}, \hbox{ if } 0 \leq \lambda \leq \frac{\sqrt{3}}{2},\\
\frac{\sqrt{2-2\sqrt{1-\lambda^2}}}{\lambda}, \hbox{ if } \frac{\sqrt{3}}{2} \leq \lambda \leq \frac{2\sqrt{2}}{3},\\
\frac{3\sqrt{3} \lambda}{4}, \hbox{ if } \frac{2\sqrt{2}}{3} \leq \lambda \leq 1.
\end{array}
\right.
\end{equation}
Furthermore, we have equality in one of the three cases if and only if $T$ is congruent to $T^2_{reg}(1)$, or $T^e(\sqrt(2-2\sqrt{1-\lambda^2}))$, or $T^e\left( \sqrt{\frac{3}{2}} \lambda \right)$, respectively.
 \end{lem}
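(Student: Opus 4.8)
The plan is to mirror the proof of Lemmas~\ref{lem:densitySph} and~\ref{lem:tightnessSph}, which in the Euclidean setting simplifies considerably because all the relevant quantities are explicit elementary functions. First I would reduce to isosceles triangles. Rename the vertices so that the two heights of length at least $2\lambda$ are those from $a$ and $b$, fix $a,b$ on the circumcircle of $T$ (radius $R(T)$), and let the third vertex move on the arc on which $\gamma:=\angle acb$ is constant, so that the circumradius stays $R(T)$. As in the spherical argument, the set of positions of the third vertex for which the triangle is non-obtuse, has all edges $\geq 2$, and has the heights from $a$ and $b$ of length $\geq 2\lambda$, is a closed sub-arc symmetric about the perpendicular bisector $L$ of $[a,b]$: on the non-obtuse range the angles at $a,b$ lie in $[0,\pi/2]$, so the two heights $\mathrm{dist}(a,bc)=|ab|\sin\angle abc$ and $\mathrm{dist}(b,ac)=|ab|\sin\angle bac$ and the two legs and the two base angles are monotone along the arc and swap under reflection in $L$; hence each condition, and therefore their intersection, is a symmetric sub-arc, so its symmetric point (an isosceles triangle) is admissible. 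Thus among the minimizers there is an isosceles one, say with base $2y$ and legs $2x$; then $\area=y\sqrt{4x^2-y^2}$, $R=\frac{2x^2}{\sqrt{4x^2-y^2}}$, and the constraints become $x\geq 1$, $y\geq 1$, $y\leq\sqrt 2\,x$ (non-obtuse), and $h_{\mathrm{leg}}=\frac{y\sqrt{4x^2-y^2}}{x}\geq 2\lambda$, which by \eqref{eq:xeu} is precisely $x\geq x^e(y)$ (and forces $y>\lambda$).

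Next I would record the monotonicities $\partial_y R=\frac{2x^2y}{(4x^2-y^2)^{3/2}}>0$ and $\partial_x R=\frac{4x(2x^2-y^2)}{(4x^2-y^2)^{3/2}}\geq 0$ (the latter because $y\leq\sqrt 2\,x$ on the admissible region). So $R$ is nondecreasing in each variable, hence a minimizer lies on the ``lower-left'' part of the boundary, i.e.\ on $\{x=1\}\cup\{y=1\}\cup\{x=x^e(y)\}$. On $\{y=1\}$, $R=\frac{2x^2}{\sqrt{4x^2-1}}$ is increasing in $x$; on $\{x=1\}$, $R=\frac{2}{\sqrt{4-y^2}}$ is increasing in $y$; and substituting $x=x^e(y)$ gives $R=R^e(y)=\frac{y^3}{2\lambda\sqrt{y^2-\lambda^2}}$, whose derivative has the sign of $2y^2-3\lambda^2$, so $R^e$ attains its unique minimum at $y=\sqrt{\frac32}\,\lambda$ with value $\frac{3\sqrt3}{4}\lambda$. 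One also checks on this branch that the non-obtuse and ``legs $\geq 2$'' constraints restrict $y$ to an interval $[1,y_-]$, where $y_-=\sqrt{2-2\sqrt{1-\lambda^2}}$ solves $x^e(y)=1$.

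The proof then finishes by a short case analysis in $\lambda$. If $0\leq\lambda\leq\frac{\sqrt3}{2}$, the point $(x,y)=(1,1)$ — the regular triangle of edge $2$, i.e.\ $T^e_{reg}(1)$ — is admissible ($h_{\mathrm{leg}}=\sqrt3\geq 2\lambda$), hence by the monotonicity it is the unique minimizer, with $R=\frac{2}{\sqrt3}$. If $\lambda>\frac{\sqrt3}{2}$ this point is inadmissible; if moreover $\lambda\leq\frac{2\sqrt2}{3}$ the critical abscissa $y=\sqrt{\frac32}\lambda$ of $R^e$ has legs $2x^e(y)<2$, so $R^e$ is strictly decreasing on the admissible interval $[1,y_-]$ and the minima along $\{x=x^e(y)\}$ and $\{x=1\}$ coincide at the corner $x^e(y)=1$, $y=y_-$, i.e.\ at $T^e(\sqrt{2-2\sqrt{1-\lambda^2}})$, with $R=\frac{\sqrt{2-2\sqrt{1-\lambda^2}}}{\lambda}$ (an elementary comparison of square roots shows the $\{y=1\}$ branch gives a strictly larger value here). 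Finally, if $\frac{2\sqrt2}{3}\leq\lambda\leq 1$ the critical point $y=\sqrt{\frac32}\lambda$ satisfies all constraints (legs $\geq 2$, base $\geq 2$, non-obtuse), lies in the interior of $[1,y_-]$, and yields $T^e(\sqrt{\frac32}\lambda)$ with $R=\frac{3\sqrt3}{4}\lambda\leq$ the values on the other two branches. Right (non-obtuse but not acute) triangles never compete: one with all edges $\geq 2$ has legs $\geq 2$ and hypotenuse $\geq 2\sqrt2$, hence circumradius $\geq\sqrt2$, exceeding every value above — this is the Euclidean analogue of discarding case (a**). The strict monotonicities above give the ``if and only if'' for the extremal triangles.

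The main obstacle, exactly as in the spherical case, is the reduction to isosceles triangles: verifying that the admissible positions of the third vertex form a closed arc symmetric about $L$, and in particular that the ``two heights $\geq 2\lambda$'' hypothesis (together with non-obtuseness) behaves monotonically along the arc so that it really is an arc condition; one must also take a little care that the correct two vertices are held fixed when symmetrizing. Once this structural step is in place, everything else reduces to differentiating the two one-variable functions $R(1,y)$ and $R^e(y)$ and to bookkeeping which of the three boundary branches is lowest on each $\lambda$-interval — which is precisely where the thresholds $\frac{\sqrt3}{2}$ and $\frac{2\sqrt2}{3}$ arise. For the type~2 cells of the refined Moln\'ar decomposition the bound is already handed to us by Lemma~\ref{lem:BL23nonSph}, so proving Lemma~\ref{lem:tightnessEu} (together with its density counterpart, proved the same way via Girard-free area comparison) completes Theorems~\ref{thm:tightnessEu}, \ref{thm:densityH} and \ref{thm:tightnessH} as indicated.
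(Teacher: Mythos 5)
Your proposal is correct and follows essentially the same route as the paper's: symmetrization along the constant-circumradius arc to reduce to isosceles triangles, the explicit formulas $x^e(y)$ and $R^e(y)=\frac{y^3}{2\lambda\sqrt{y^2-\lambda^2}}$, the monotonicity of $R^e$ with its unique minimum at $y=\sqrt{3/2}\,\lambda$, and the case analysis at the thresholds $\frac{\sqrt{3}}{2}$ and $\frac{2\sqrt{2}}{3}$ --- you merely spell out the Euclidean case analysis that the paper leaves as a sketch. (Incidentally, your critical abscissa $y=\sqrt{3/2}\,\lambda$ is the correct one, consistent with the extremal value $\frac{3\sqrt{3}}{4}\lambda$; the paper's Step 5 misprints it as $\sqrt{2/3}\,\lambda$, which would be smaller than $\lambda$.)
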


\begin{lem}\label{lem:densityH}
Let $T = \conv \{a,b,c\} \subset \HH^2$ be a hyperbolic triangle with edge lengths at least $2\rho$ that contains its circumcenter. For $x \in \{ a,b,c\}$, let $C_x$ denote the closed hyperbolic disk of radius $\lambda \leq \rho$ centered at $x$. Assume that there are lines $L_a$ and $L_b$ such that $L_a$ separates $C_a$ from $C_b$ and $C_c$, and $L_b$ separates $C_b$ from $C_a$ and $C_c$. Let $\delta(T)$ denote the density of $\{ C_x, C_y, C_z \}$ with respect to $T$.
Then we have the following.
\begin{equation}\label{eq:densitylemH}
\delta(T) \leq
\left\{
\begin{array}{l}
\delta(T^h(x^h|_{H_1}^{-1}(\rho))), \hbox{ if } \lambda \leq \rho \leq y_s^h(\lambda), \\
\delta(T_{reg}^s(\rho)), \hbox{ if } y_s^s(\lambda) < \rho.\\
\end{array}
\right.                                                                     
\end{equation}
Furthermore, in the above two cases we have equality if and only if $T$ is congruent to $T^h(x^h|_{H_1}^{-1}(\rho))$ or $T_{reg}^h(\rho)$, respectively.
\end{lem}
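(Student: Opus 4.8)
\noindent {\bf Proof (outline).}
The plan is to run the argument of the proofs of Lemmas~\ref{lem:densitySph} and~\ref{lem:tightnessSph}, with spherical trigonometry replaced by its hyperbolic counterpart. By the Gauss--Bonnet theorem a hyperbolic triangle of angle sum $\varphi$ has area $\pi-\varphi$, so $\delta(T)=(\cosh\rho-1)\bigl(\tfrac{\pi}{\area(T)}-1\bigr)$ is strictly decreasing in $\area(T)$; hence it suffices to identify, among all hyperbolic triangles $T=\conv\{a,b,c\}$ that contain their circumcenter, have all edges of length at least $2\rho$, and admit the two separating lines of the statement, those of minimal area, and to show these are exactly the triangles listed. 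Since a triangle of large circumradius has angle sum close to $0$ and hence area close to $\pi$, we may assume a uniform bound on the circumradius; together with the edge bound $2\rho>0$ and the separation condition, which (as in the spherical proof) keeps the area bounded away from $0$, compactness yields a minimizer $T$.

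First I would reduce to symmetric extremal triangles exactly as in the spherical case. Fixing the base $[a,b]$ and moving the third vertex along the hyperbolic Lexell curve through $c$ (the locus of apices of triangles on base $[a,b]$ with area $\area(T)$; it is a hypercycle, symmetric in the perpendicular bisector $L$ of $[a,b]$ and meeting $L$), and using Lemma~\ref{lem:trivial}, one checks that each of the three constraints cuts out a closed sub-arc symmetric in $L$, so a minimal-area $T$ may be taken symmetric in $L$. A perturbation argument then forces such a $T$ to be either $T_{reg}^h(\rho)$ or to satisfy condition (c): the two separating lines, passing through the relevant midpoints, are tangent to all of $C_a,C_b,C_c$ (the subcases where the circumcenter lies on $[a,b]$, or where both legs have length $2\rho$ while $d(a,b)>2\rho$, being eliminated by the same moves as on the sphere).

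Next I would describe the (c)-triangles explicitly. Writing the leg and tangency relations via the hyperbolic Pythagorean theorem, in terms of the feet of the perpendiculars dropped from the vertices onto a separating line, exactly as in Step~1 of the proof of Lemma~\ref{lem:densitySph}, one is led, with base $2y$ and legs $2x$, to
\[
\sinh(2x)=\frac{\cosh\lambda\,\sinh^2 y}{\sqrt{\sinh^2 y-\sinh^2\lambda}} ,
\]
which, since $\sinh$ is injective, has the unique solution $T^h(y)$ (in contrast to the spherical case, where both $2x$ and $\pi-2x$ occur), existing for $y>\lambda$. As in Step~2, $T^h(y)$ contains its circumcenter iff $y\in\overline{H_1}$, i.e. $\lambda<y\le\arcsinh(\sqrt2\sinh\lambda)$; and, as in Step~3, comparing $2x^h(y)$ with $2y$ reduces, with $Y=\sinh y$ and $L=\sinh\lambda$, to the inequality $4Y^4-(5L^2-3)Y^2-4L^2\le 0$ in $Y^2$, whose unique positive root is $\sinh^2 y_s^h(\lambda)$; hence $x^h(y)\ge y$ on $(\lambda,y_s^h(\lambda)]$, and (as in Remark~\ref{rem:regulartriangle}) $T_{reg}^h(\rho)$ satisfies the hypotheses of the lemma iff $\rho\ge y_s^h(\lambda)$.

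Finally, as in Step~5, expressing $\cosh\tfrac12\area(T^h(y))$ through the hyperbolic laws of sines and cosines as a function of $Y$ and $L$, one shows its derivative has constant sign on $\overline{H_1}$, so that $\area(T^h(y))$ is strictly decreasing there. Then for $\lambda\le\rho\le y_s^h(\lambda)$ the admissible (c)-triangles are the $T^h(y)$ with $\rho\le y\le x^h|_{H_1}^{-1}(\rho)$ (a nonempty interval, since $x^h|_{H_1}^{-1}(y_s^h(\lambda))=y_s^h(\lambda)\ge\rho$), so their minimum area is attained at $y=x^h|_{H_1}^{-1}(\rho)$, while $T_{reg}^h(\rho)$ is not admissible in this range; for $\rho>y_s^h(\lambda)$ no (c)-triangle is admissible (on $\overline{H_1}$, $y\ge\rho>y_s^h(\lambda)$ forces $x^h(y)<y_s^h(\lambda)<\rho$), so the minimizer is $T_{reg}^h(\rho)$. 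Strict monotonicity of $\area(T^h(\cdot))$ yields the claimed uniqueness of the extremal triangle in each case. I expect the main obstacle to be precisely this last step, the hyperbolic analogue of Step~5: the hyperbolic trigonometric identities must be pushed through until the sign of the derivative of $\area(T^h(y))$ is read off from an explicit polynomial in $Y^2$ and $L^2$. A secondary point requiring care is the hyperbolic form of Lexell's theorem (a hypercycle in place of a circle) and the verification that the symmetric-sub-arc reduction is unaffected by this.
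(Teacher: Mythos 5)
Your outline reproduces the paper's own proof essentially step for step: the reduction to symmetric extremal triangles via the hypercycle (hyperbolic Lexell curve) and Lemma~\ref{lem:trivial}, the tangency condition giving $\sinh(2x)=\cosh\lambda\,\sinh^2 y/\sqrt{\sinh^2 y-\sinh^2\lambda}$ with its unique solution $T^h(y)$, the circumcenter criterion $y\le\arcsinh(\sqrt2\sinh\lambda)$, the comparison $x^h(y)\ge y$ (your quadratic $4Y^4-(5L^2-3)Y^2-4L^2\le 0$ is correct and its positive root is indeed $\sinh^2 y_s^h(\lambda)$), the monotonicity of $\area(T^h(y))$ on $H_1$, and the same final case analysis, so the approach is the same and correct. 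The one inaccuracy is your compactness heuristic: a hyperbolic triangle containing its circumcenter with all edges at least $2\rho$ can have circumradius tending to infinity while its area stays bounded away from $\pi$ (a thin isosceles triangle with base exactly $2\rho$ and legs growing has area tending to $\pi-2\arcsin(1/\cosh\rho)$), but since the paper likewise only asserts the existence of minimizers, this does not change the comparison.
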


\begin{lem}\label{lem:tightnessH}
Let $T = \conv \{a,b,c\} \subset \HH^2$ be a hyperbolic triangle with edge lengths at least $2\rho$ that contains its circumcenter. For $x \in \{ a,b,c\}$, let $C_x$ denote the closed hyperbolic disk of radius $\lambda \leq \rho$ centered at $x$. Assume that there are lines $L_a$ and $L_b$ such that $L_a$ separates $C_a$ from $C_b$ and $C_c$, and $L_b$ separates $C_b$ from $C_a$ and $C_c$. Let $R(T)$ denote the circumradius of $T$.
Then we have the following.
\begin{equation}\label{eq:crlemH}
R(T) \geq
\left\{
\begin{array}{l}
R^h(y_{min}^h(\lambda)), \hbox{ if } \lambda \leq \rho \leq x^h(y_{min}^s(\lambda)),\\
R^h(x^h|_{H_1}^{-1}(\rho)), \hbox{ if } x^h(y_{min}^h(\lambda)) < \rho \leq y_s^h(\lambda),\\
R_{reg}^h(\rho), \hbox{ if } y_s^s(\lambda) < \rho.
\end{array}
\right.                                                                      
\end{equation}
Furthermore, in the above three cases we have equality if and only if $T$ is congruent to $T^h(y_{min}^h(\lambda))$, $T^h(x^h|_{H_1}^{-1}(\rho))$, or $T_{reg}^h(\rho)$, respectively.
\end{lem}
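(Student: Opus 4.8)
The plan is to prove Lemma~\ref{lem:tightnessH} together with Lemma~\ref{lem:densityH}, mirroring essentially line by line the proof of Lemmas~\ref{lem:densitySph} and \ref{lem:tightnessSph}, with spherical trigonometry replaced by hyperbolic trigonometry; indeed Lemma~\ref{lem:tightnessH} stands to Lemma~\ref{lem:densityH} exactly as Lemma~\ref{lem:tightnessSph} stands to Lemma~\ref{lem:densitySph}. First, since Lemma~\ref{lem:trivial} already holds in $\HH^2$, we may assume that the line separating $C_a$ from $\{C_b,C_c\}$ and the line separating $C_b$ from $\{C_a,C_c\}$ each pass through the midpoints of two sides of $T$. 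Next comes the geometric reduction: by compactness (all sides have length at least $2\rho$ and two heights have length at least $2\lambda$), there is a triangle $T_\delta$ of minimal area and a triangle $T_\gamma$ of minimal circumradius in the admissible family, and one shows, exactly as in the spherical proof, that each is isosceles and satisfies one of the analogues of (a**)--(c**). For $T_\delta$ this uses the hyperbolic Lexell locus of the apex over a fixed base — a hypercycle rather than a circle, but still a curve on which the conditions ``contains its circumcenter'', ``sides $\geq 2\rho$'' and the $\lambda$-separability condition cut out nested closed arcs symmetric about the perpendicular bisector of the base; for $T_\gamma$ the circumcircle of $T_\gamma$ plays the role of the Lexell locus, as in the spherical argument. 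A perturbation argument (moving the vertices radially toward, or the apex angle away from, the symmetric position) rules out (a**) and (b**) unless $T=T_{reg}^h(\rho)$, so it remains to treat isosceles triangles satisfying (c**) together with the regular triangle.

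For the (c**) triangles I would run the hyperbolic analogues of Steps 1--3. Writing the side lengths as $2x,2x,2y$ with apex $p$ and applying the hyperbolic Pythagorean theorem $\cosh c=\cosh a\cosh b$ to the four right triangles cut off by the separating line through midpoints, the same elimination as in the spherical case yields $\sinh^2(2x)\,(\sinh^2 y-\sinh^2\lambda)=\cosh^2\lambda\,\sinh^4 y$, that is, $2x=2x^h(y)$ with $x^h$ as in (\ref{eq:xh}); so for each $\lambda>0$ the admissible $y$ are exactly $\lambda<y$, and the solution up to congruence is $T^h(y)$. Then $T^h(y)$ contains its circumcenter iff its base-height is at least $y$, and via $\cosh m=\cosh(2x^h(y))/\cosh y$ this reduces to $\sinh^2 y\leq 2\sinh^2\lambda$, i.e.\ $y\in H_1=(\lambda,\arcsinh(\sqrt2\sinh\lambda)]$, matching Remark~\ref{rem:monotonicityofx}. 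Comparing $x^h(y)$ with $y$ leads to a quadratic in $\sinh^2 y$ with discriminant $25\sinh^4\lambda+34\sinh^2\lambda+9>0$ (no $\lambda\leq\pi/4$ obstruction arises here), whose relevant root is $y_s^h(\lambda)$ of (\ref{eq:ysh}); and $T^h(y)$ or its reflected copy is regular precisely when $y=x^h(y)$, which shows $T_{reg}^h(\rho)$ is admissible iff $y_s^h(\lambda)<\rho$ (there is no counterpart of $y_b^s$ since $\cosh$ is unbounded).

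The heart of the matter, and the step I expect to be the main obstacle, is the hyperbolic analogue of Steps 4--5: computing $R^h(y)$ from the hyperbolic laws of sines and cosines and a hyperbolic circumradius formula analogous to the one in Step 4 of the spherical proof, and then establishing that $R^h(y)$ is strictly decreasing on $(\lambda,y_{min}^h(\lambda)]$ and strictly increasing on $[y_{min}^h(\lambda),\infty)$. Setting the derivative to zero reduces, after clearing square roots, to a cubic in the shifted variable $Z=\sinh^2 y-\tfrac{5}{3}\sinh^2\lambda$; in contrast to the spherical case this cubic has positive discriminant, hence three real roots, and the correct one is selected by the trigonometric (``casus irreducibilis'') formula with the $-2\pi/3$ shift, giving exactly $y_{min}^h(\lambda)$ of (\ref{eq:crsolh}). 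As in the spherical proof, one must also verify (numerically) that $\lambda<y_{min}^h(\lambda)\leq\arcsinh(\sqrt2\sinh\lambda)$ and $x^h(y_{min}^h(\lambda))<y_s^h(\lambda)\leq y_{min}^h(\lambda)$, with the relevant strict inequalities. This entire step is a nontrivial but routine calculus-and-algebra computation.

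Finally, Step 6 is a case analysis that is strictly shorter than its spherical counterpart, because there is no $\lambda\leq\pi/4$ restriction and no ``$\rho$ too large'' case to discard. For $\lambda\leq\rho\leq y_s^h(\lambda)$ only triangles $T^h(y)$ with $\lambda<y\leq x^h|_{H_1}^{-1}(\rho)$ and $y\geq\rho$ compete; by the monotonicity of $x^h|_{H_1}^{-1}$ and of $R^h$, the minimizer is $T^h(y_{min}^h(\lambda))$ when the interior critical value is attainable, i.e.\ $\rho\leq x^h(y_{min}^h(\lambda))$, and $T^h(x^h|_{H_1}^{-1}(\rho))$ otherwise; for $y_s^h(\lambda)<\rho$ the admissible regular triangle $T_{reg}^h(\rho)$ is the minimizer. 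The ``if and only if'' in the equality cases is immediate from the strict monotonicity established in Step 5, exactly as in the spherical proof, and Lemma~\ref{lem:densityH} is obtained from the same setup with areas (and the hypercycle Lexell locus) in place of circumradii (and the circumcircle), using Girard's hyperbolic analogue relating area to angle sum.
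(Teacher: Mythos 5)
Your proposal follows essentially the same route as the paper: the paper proves Lemma~\ref{lem:tightnessH} together with Lemmas~\ref{lem:tightnessEu} and \ref{lem:densityH} by exactly this adaptation of the spherical argument — hypercycle Lexell locus (resp.\ circumcircle) for the reduction to regular or (c**)-type isosceles triangles, the same relation $\sinh(2x)=\cosh\lambda\sinh^2 y/\sqrt{\sinh^2 y-\sinh^2\lambda}$, the same cubic in $Z=\sinh^2 y-\tfrac{5}{3}\sinh^2\lambda$ with positive discriminant and the $-2\pi/3$ root giving $y_{min}^h(\lambda)$, the same numerically verified ordering $\lambda<x^h(y_{min}^h(\lambda))<y_s^h(\lambda)<y_{min}^h(\lambda)<\arcsinh(\sqrt2\sinh\lambda)$, and the same concluding case analysis. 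No discrepancies of substance.
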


The proof of these lemmas follows the proof of Lemmas~\ref{lem:densitySph} and \ref{lem:tightnessSph}, and we only sketch them.

\begin{proof}[Proof of Lemmas~\ref{lem:tightnessEu}, \ref{lem:densityH} and \ref{lem:tightnessH}]
First, by the well known formula relating the angle defect of a hyperbolic triangle to its area, if $\M = \HH^2$, $\delta(T)$ is maximal if and only if $\area(T)$ is minimal among the triangles satisfying the conditions in Lemma~\ref{lem:densityH}.

Furthermore, similarly like in the previous section, we observe that both area and circumradius is minimized for some triangle satisfying the conditions in the lemmas.

Let $\M = \HH$, and let $T$ be a minimum area triangle satisfying the conditions. Let $H$ denote the open hyperbolic half plane containing $c$ in its interior and $a,b$ on its boundary. Recall the well known fact that for any $d \in H$ a hyperbolic triangle $T'=\conv \{ a,b d \}$ satisfies $\area(T')= \area(T)$ if and only if $d$ lies on the hypercycle through $c$ generated by the midpoints of $[a,c]$ and $[b,c]$. Let $G_0$ denote this hypercycle, and note that it is symmetric to the bisector $L$ of $[a,b]$. Applying the argument in the previous section, we obtain there is a hypercycle arc $G'$ in $G_0$ symmetric to $c_0$ such that for any $d \in C_0$, the triangle $\conv \{ a,b,d \}$ satisfies the conditions in Lemma~\ref{lem:densityH} if and only if $d \in G'$. The same argument can be applied for Lemmas~\ref{lem:tightnessEu} and \ref{lem:tightnessH} in which the circumcircle of $T$ plays the role of $G_0$.

From this, following the argument in the proof of Lemmas~\ref{lem:densitySph} and \ref{lem:tightnessSph}, we obtain that if $T$ has minimal circumradius or area, then $T$ is either
\begin{enumerate}
\item[(a')] a regular triangle of edge length $2\rho$, or
\item[(b')] $d(a,b)=d(b,c)$ and the line through the midpoints of $[a,c]$ and $[a,b]$ (respectively, the midpoints of $[b,c]$ and $[a,b]$) touches the three disks of radius $\lambda$ centered at the vertices of $T$.
\end{enumerate}

\emph{Step 1}: computing the edge lengths of the triangles satisfying (b').

Following the argument in Section~\ref{sec:proof45}, we obtain that for a hyperbolic triangle $T$,
\[
\sinh (2x) = \frac{\cosh \lambda \sinh^2 y}{\sqrt{\cosh^2 y - \cosh^2 \lambda}} = \frac{\cosh \lambda \sinh^2 y}{\sqrt{\sinh^2 y - \sinh^2 \lambda}},
\]
which is defined for any $y > \lambda$, yielding that the only hyperbolic triangle satisfying (b'), up to congruence, is $T^h(y)$.
In $\Eu^2$, we can similarly obtain that for any $y > \lambda$, up to congruence, there is a unique isosceles triangle satisfying (b'), namely $T^e(y)$.

\emph{Step 2}: Checking if $T^h(y)$ or $T^e(y)$ contains its circumcenter.

Applying the idea of the proof in Section~\ref{sec:proof45} yields that $T^h(y)$ contains its circumcenter if and only if $\arcsinh \tanh \lambda \leq y \leq \arcsinh (\sqrt{2} \sinh \lambda)$. Similarly, $T^e(y)$ contains its circumcenter if and only if $\lambda < y \leq \sqrt{2} \lambda$.

\emph{Step 3}: Finding the relations between the lengths of the sides of $T^h(y)$ and $T^e(y)$.

In $\HH^2$, a consideration like in the spherical case leads to the fact that the inequality $x^h(y) \geq y$ is satisfied 
if and only if
\[
y \leq y_s^h(\lambda)=\arcsinh \sqrt{ \frac{5 \sinh^2 \lambda - 3 + \sqrt{25 \sinh^4 \lambda + 34 \sinh^2 \lambda + 9}}{8} }.
\]
We note that like in the spherical case, we have that the hyperbolic disks of radius $\lambda$ centered at the vertices of $T_{reg}^h(\rho)$ are totally separable if and only if $ \rho \geq y_s^h(\lambda)$. In this case $T_{reg}^h(\rho)$ has minimal area and minimal circumradius among the triangles satisfying the conditions of the lemmas.

In the Euclidean plane, the inequality $x^e(y)  \geq y$ is satisfied if and only if $\lambda < y \leq \frac{2}{\sqrt{3}} \lambda$. Here, $T^e_{reg}(1)$ satisfies the conditions if and only if $\lambda \leq \frac{\sqrt{3}}{2}$. In this case $T^e_{reg}(1)$ has minimal circumradius.

\emph{Step 4}: Computing $\area(T^h(y))$, $R^y(y)$ and $R^e(y)$.

A slight modification of the computation in the spherical case shows that
\[
\cosh \frac{\area(T^h(y))}{2} = \frac{\sqrt{\sinh^2 y - \sinh^2 \lambda} \sqrt{\sinh^2 y \cosh^2 \lambda - \sinh^2 \lambda} + \sinh^2 \lambda \cosh y}{\sinh^2 y \cosh^2 \lambda}, \hbox{ and}
\]
\[
\coth R^h(y) = \frac{\sinh \lambda}{\cosh^2 \lambda} \left( \frac{\cosh^2 y}{\sinh^3 y} \sqrt{\sinh^2 y \cosh^2 \lambda - \sinh^2 \lambda} + \frac{\cosh y}{\sinh^3 y} \sqrt{\sinh^2 y - \sinh^2 \lambda} \right).
\]
Here we used the formula $\coth R= \sqrt{\frac{\sin(\alpha+P) \sin (\beta + P) \sin(\gamma+P)}{\sin P}}$ for the circumradius of a hyperbolic triangle with angles $\alpha, \beta, \gamma$, where $P=\frac{1}{2}(\pi-\alpha-\beta-\gamma)$ \cite{SV12}.
Similarly, for the circumradius $R^e(y)$ of $T^e(y)$, we obtain
\[
R^e(y) = \frac{y^3}{2 \lambda \sqrt{y^2-\lambda^2}}.
\]

\emph{Step 5}: Investigating the monotonicity properties of $\area(T^h(y))$, $R^h(y)$ and $R^e(y)$.

A simple computation following the proof in the spherical case shows that $\area(T^h(y))$ is strictly decreasing on 
$\left[ \arcsinh \tanh \lambda, \arcsinh ( \sqrt{2} \sinh \lambda) \right]$, and strictly increasing on $\left[ \arcsinh ( \sqrt{2} \sinh \lambda) ,\infty \right)$.

We sketch the computation for $R^h(y)$. Denoting $\sinh \lambda$ and $\sinh y$ by $L$ and $Y$, we obtain that $\coth R^h(y)$ can be written as
\[
h(Y)= \frac{(Y^2+1) \sqrt{Y^2 (L^2+1)-L^2}}{Y^3}+\frac{\sqrt{Y^2+1} \sqrt{Y^2-L^2}}{Y^3}.
\]
We examine first the condition when the squares of the derivatives of the first and the second members of $h(Y)$ are equal.
With the notation $Z=Y^2-\frac{5L^2}{3}$, this leads to the cubic equation $Z^3 - \left( 2L^2 + \frac{10L^4}{3} \right)Z -\frac{25}{27} L^6- \frac{L^4}{3}=0$.
The discriminant of this equation is positive, implying that it has three real roots. Solving it, we obtain that $Y$ has to satisfy one of the following:
\begin{eqnarray*}
Y^2 & = &\frac{5L^2}{3} + \frac{2}{3} \sqrt{10L^4+6L^2} \cos\left( \frac{1}{3} \arccos \left( \frac{(25 L^2+9) L}{4 \sqrt{2} (5L^2+3)^{3/2}} \right) \right),\\
Y^2 & = &\frac{5L^2}{3} + \frac{2}{3} \sqrt{10L^4+6L^2} \cos\left( \frac{1}{3} \arccos \left( \frac{(25 L^2+9) L}{4 \sqrt{2} (5L^2+3)^{3/2}} \right) -\frac{2\pi}{3} \right),\\
Y^2 & = &\frac{5L^2}{3} + \frac{2}{3} \sqrt{10L^4+6L^2} \cos\left( \frac{1}{3} \arccos \left( \frac{(25 L^2+9) L}{4 \sqrt{2} (5L^2+3)^{3/2}} \right) -\frac{4\pi}{3} \right)
\end{eqnarray*}
A numeric computation shows that for every $L > 0$ all three expressions exist, and the third expression is negative whereas the first two expressions are positive. On the other hand, an elementary computation shows that the derivatives of the first and the second member of $f(Y)$ cannot be zero simultaneously for $L > 0$, and recall that we examined only if the two derivatives are equal in absolute value. Since the above expressions are continuous functions of $L$ at which the two derivatives are equal in absolute value identically, it is sufficient to test whether the derivatives have the same or opposite signs for one arbitrary value $L > 0$. Testing it we obtain that $h'(Y)=0$ if and only if $Y^2$ is equal to the second expression. Thus, by $Y > 0$, we obtain that for any $\lambda > 0$, there is a unique stationary point of the function $\coth R^h(y)$, namely
\[
y_{min}^h(\lambda) = \arcsinh \sqrt{ \frac{5L^2}{3} + \frac{2}{3} \sqrt{10L^4+6L^2} \cos\left( \frac{1}{3} \arccos \left( \frac{(25 L^2+9) L}{4 \sqrt{2} (5L^2+3)^{3/2}} \right) -\frac{2\pi}{3} \right)  }  > \lambda ,
\]
where $L = \sinh \lambda$.
A simple computation yields that for any $L > 0$, $\lim_{Y \to L+0} f'(Y) = \infty$ and $\lim_{Y \to \infty} Y^2 \cdot f'(Y) = -1 < 0$, showing that $R^h(y)$ is strictly decreasing on the interval $(\lambda,y_{min}^h(\lambda)]$, and strictly increasing on the interval $[y_{min}^h(\lambda),\infty)$.

We note that numeric computations show that $\lambda < x^h(y_{min}^h(\lambda)) < y_s^h(\lambda) < y_{min}^h(\lambda) <\arcsinh(\sqrt{2} \lambda)$ for all $\lambda > 0$ (see Figure~\ref{fig:hyperbolic}).

\begin{figure}[ht]
  \begin{center}
  \includegraphics[width=0.5\textwidth]{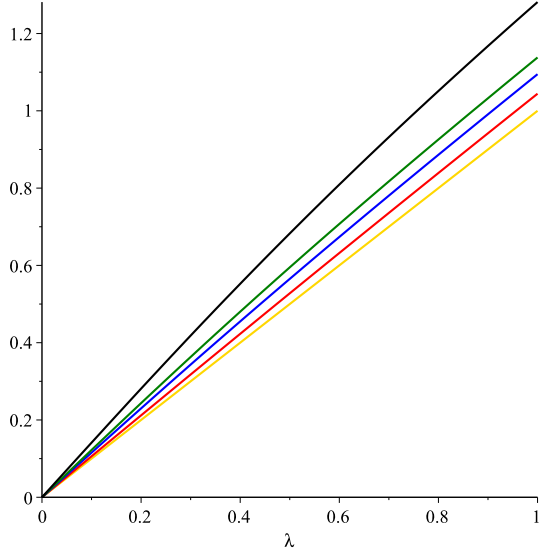} 
 \caption{An illustration for the curves $\lambda$ (yellow), $x^h(y_{min}^h(\lambda))$ (red), $y_s^h(\lambda)$ (blue), $y_{min}^h(\lambda)$ (green) and $\arcsinh(\sqrt{2}\sinh \lambda)$ (black) for a hyperbolic triangle.}
\label{fig:hyperbolic}
\end{center}
\end{figure}

In the Euclidean plane, examining the sign of the derivative of $R^e(y)$ shows that $R^e(y)$ is strictly decreasing on $\left( \lambda, \sqrt{\frac{2}{3}} \lambda \right]$, and strictly increasing on $\left[ \sqrt{\frac{2}{3}} \lambda, \infty \right)$.

\emph{Step 6}: A case analysis to prove the lemmas.

An investigation similar to the one in Section~\ref{sec:proof45} finishes the proof.
\end{proof}

\section{Proof of Theorem~\ref{contact-upper-bound}}

Theorems of Harborth \cite{Ha} and of Heitmann and Radin \cite{HeRa} imply in a straightforward way that $c_{\lambda}(n, B)=\lfloor 3n -\sqrt{12n-3}\rfloor$ holds for all $n>1$ and $0\leq \lambda\leq\frac{\sqrt{3}}{2}$.

Now, let $\frac{\sqrt{3}}{2}< \lambda\leq 1$. Then $\lfloor 2n - 2\sqrt{n}\rfloor\leq c_{\lambda}(n, B)$ follows in a straightforward way from Theorem 11 in \cite{Be21}. So, we are left to show that $c_{\lambda}(n, B)\leq 2n -\sqrt{\pi\lambda}\sqrt{n}+O(1)$ holds for $n>1$.

First, recall that the proof of Theorem~\ref{thm:densityEu} in Section~\ref{Euclidean etc} gives a proof of

\begin{lem}\label{Bezdek-Langi}
Let $\frac{\sqrt{3}}{2}<\lambda\leq 1$ and let $\mathcal{P}$ be a $\lambda$-separable packing of unit disks in $ \Eu^2$. Then the density of $\mathcal{P}$ in each cell of the refined Moln\'ar decomposition is at most $\frac{\pi}{4\lambda}$.
\end{lem}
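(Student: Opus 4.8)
The plan is to read the statement off from the proof of Theorem~\ref{thm:densityEu} given in Section~\ref{Euclidean etc}, specialised to $\rho=1$ (so $R_\rho=\sqrt 2$) and to the range $\frac{\sqrt 3}{2}<\lambda\le 1$. Given a $\lambda$-separable packing $\mathcal{P}$ of unit disks in $\Eu^2$, I would form the refined Moln\'ar decomposition of $\Eu^2$ associated with the set $X$ of centres of $\mathcal{P}$ and estimate the density of $\mathcal{P}$ in each cell. As recalled in Section~\ref{Euclidean etc}, every cell is either a type~1 cell, which (splitting a possible regular quadrangle cell of edge length $2$ into two triangles) is a triangle $T$ containing its circumcentre with all sides of length at least $2$, or a type~2 cell $P=\cl\big(\conv\{v,c_i,c_j\}\setminus\conv\{v',c_i,c_j\}\big)$ with $d(c_i,c_j)\ge 2$, so it suffices to handle these two types.

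For a type~1 triangle $T$ with vertices $a,b,c$, the portion of $\mathcal{P}$ lying in $T$ is contained in the union of the three circular sectors cut out of the unit disks at $a,b,c$ by the interior angles of $T$, hence has area at most $\tfrac12(\angle a+\angle b+\angle c)=\tfrac\pi2$. On the other hand, since the unit disks at $a,b,c$ are a $\lambda$-separable triple and $T$ contains its circumcentre, at least two of the heights of $T$ are at least $2\lambda$; this is exactly the elementary observation used in the proof of Theorem~\ref{thm:densityEu}, obtained by applying Lemma~\ref{lem:trivial} to the midlines of $T$. Choosing such a height, whose opposite side has length at least $2$, gives $\area(T)\ge\tfrac12\cdot 2\cdot 2\lambda=2\lambda$, so the density of $\mathcal{P}$ in $T$ is at most $\dfrac{\pi/2}{2\lambda}=\dfrac{\pi}{4\lambda}$.

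For a type~2 cell $P$, only the two unit disks centred at $c_i$ and $c_j$ meet $P$, and they have disjoint interiors since $d(c_i,c_j)\ge 2$. Writing the density of $\mathcal{P}$ in $P$ as $\dfrac{\varphi(R)-\varphi(R_\rho)}{A(R)-A(R_\rho)}$, where $R\ge R_\rho$ is the circumradius of the relevant Delaunay cell and $\varphi(t)$, $A(t)$ are the total sector area and the area of the isosceles triangle with base $d(c_i,c_j)$ and legs of length $t$, the fact that $\varphi(t)/A(t)$ is strictly decreasing in $t$ (Lemma~\ref{lem:BL23nonSph}) yields, by a mediant-type inequality, that this quantity is at most $\varphi(R_\rho)/A(R_\rho)$, the density of two unit disks in the isosceles triangle with base $d(c_i,c_j)\ge 2$ and legs $\sqrt 2$. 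Using once more the monotonicity in the base length, this is at most the value on the right isosceles triangle with legs $\sqrt 2$ and base $2$, namely $\dfrac{\pi/4}{1}=\dfrac{\pi}{4}$; and since $\lambda\le 1$ we have $\dfrac{\pi}{4}\le\dfrac{\pi}{4\lambda}$, which finishes the proof.

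The argument is essentially a specialisation of what is already done for Theorem~\ref{thm:densityEu}; the one place that needs attention is the type~2 estimate, where one has to use the precise construction of the refined Moln\'ar decomposition (so that $\conv\{v',c_i,c_j\}$ has legs of length $R_\rho$) and combine the mediant inequality with the two monotonicity properties of Lemma~\ref{lem:BL23nonSph}. I do not anticipate any genuine difficulty beyond this bookkeeping.
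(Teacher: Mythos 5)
Your proof follows essentially the same route as the paper: type~1 cells are handled by combining the sector-area bound $\pi/2$ with the area lower bound coming from all sides being at least $2$ and two heights being at least $2\lambda$ (via Lemma~\ref{lem:trivial} applied to the midlines), and type~2 cells are handled by the monotonicity of Lemma~\ref{lem:BL23nonSph} together with a mediant inequality. The only difference is cosmetic: your area bounds $\area(T)\ge\max\{\sqrt{3},2\lambda\}$ are the correct ones (the paper's stated $\frac{\sqrt3}{4}$ and $\frac{\lambda}{2}$ appear to be typographical slips), and you spell out the type~2 mediant step that the paper leaves implicit.
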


Second, we use the proof technique of Theorem 4 of Eppstein \cite{Ep} as well as Lemma~\ref{Bezdek-Langi} for proving Lemma~\ref{eppstein-type}. In order to state it we need

\begin{defn}
Let $\mathcal{P}$ be a packing of unit disks in $ \Eu^2$. Then the plane graph $G_c(\mathcal{P})$ whose vertices are the center points of the unit disks of $\mathcal{P}$ and whose edges are the line segments connecting two vertices of $G_c(\mathcal{P})$ if and only if the corresponding two unit disks of $\mathcal{P}$ are tangent, is called the {\rm contact graph} of $\mathcal{P}$.
\end{defn}

\begin{lem}\label{eppstein-type}
Let $\frac{\sqrt{3}}{2}<\lambda\leq 1$ and let $\mathcal{P}$ be a $\lambda$-separable packing of $n>1$ unit disks in $ \Eu^2$. Then in the contact graph $G_c(\mathcal{P})$, the number of vertex-face incidences on the outer face of $G_c(\mathcal{P})$ is at least $2\sqrt{\pi\lambda}\sqrt{n}-O(1)$.
\end{lem}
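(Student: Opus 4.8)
The plan is to turn the planar density bound $\tfrac{\pi}{4\lambda}$ supplied by Lemma~\ref{Bezdek-Langi} into a lower bound on the ``perimeter'' of the packing by means of the isoperimetric inequality in the plane, in the spirit of Theorem~4 of \cite{Ep}. Write $G:=G_c(\mathcal{P})$, let $X$ be the set of the $n$ centers, let $h$ be the number of vertex--face incidences on the outer face of $G$, and let $b$ be the number of points of $X$ lying on $\bd\conv(X)$. First I would reduce to the case where $G$ is connected; the general case then follows by applying the connected case to each component and collecting the resulting estimates through Euler's formula (small components being absorbed into the error term). Assume from now on that $G$ is connected.

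The first main step is the inequality $\area(\conv(X))\ge 4\lambda n-4\lambda b$. Recall that the cells of the refined Moln\'ar decomposition of $X$ tile $\conv(X)$, and that for such a cell $P$ with internal angle sum $\varphi_P$ the density of $\mathcal{P}$ in $P$ equals $\tfrac{\varphi_P/2}{\area(P)}$ (each unit disk contributes to $P$ exactly the circular sector of angle equal to the angle of $P$ at that center). Hence, by Lemma~\ref{Bezdek-Langi}, $\area(P)\ge \tfrac{2\lambda}{\pi}\varphi_P$ for every cell $P$. Summing over all cells $P$ and using that the cell-angles around any point of $X$ interior to $\conv(X)$ add up to $2\pi$, while the contributions of the $b$ boundary points are nonnegative, we obtain
\[
\area(\conv(X))\ =\ \sum_{P}\area(P)\ \ge\ \frac{2\lambda}{\pi}\sum_{v\in X}(\text{angle sum of all cells at }v)\ \ge\ 4\lambda(n-b).
\]

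The second main step bounds $\area(\conv(X))$ from above by $h^2/\pi$. Every point of $X$ on $\bd\conv(X)$ has all remaining centers in a single closed half-plane, so no edge of $G$ leaves it in the outward direction, and it is therefore incident to the outer face; consequently $b\le h$. Moreover, since $G$ is connected, the boundary of its outer face is a single closed walk $W$ with exactly $h$ edges, each of them a contact segment of Euclidean length $2$, so $W$ has length $2h$. As $W$ passes through every vertex of $\conv(X)$, we get $\conv(X)\subseteq\conv(W)$, hence $\operatorname{perim}(\conv(X))\le\operatorname{perim}(\conv(W))\le 2h$; the isoperimetric inequality then gives $4\pi\area(\conv(X))\le\operatorname{perim}(\conv(X))^2\le 4h^2$, i.e. $\area(\conv(X))\le h^2/\pi$.

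Combining the two steps and using $b\le h$ yields $4\pi\lambda n-4\pi\lambda h\le h^2$, that is, $h^2+4\pi\lambda h-4\pi\lambda n\ge 0$, whence
\[
h\ \ge\ -2\pi\lambda+2\sqrt{\pi^2\lambda^2+\pi\lambda n}\ \ge\ 2\sqrt{\pi\lambda}\,\sqrt{n}-2\pi\lambda\ =\ 2\sqrt{\pi\lambda}\,\sqrt{n}-O(1),
\]
as required. I expect the main obstacle to be organisational rather than conceptual: one must check that all error terms coming from the boundary of $\conv(X)$ are genuinely $O(h)$, so that after being moved to the left-hand side they are absorbed into the $O(1)$ term, and one must carry out the reduction to a connected contact graph with enough care that the constant in the $O(1)$ does not deteriorate with the number of components. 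Once these points are settled, the only substantive input is the passage from the density bound to a perimeter bound via the isoperimetric inequality, which is short.
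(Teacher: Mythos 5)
Your strategy is the same as the paper's: feed the density bound of Lemma~\ref{Bezdek-Langi} into the isoperimetric inequality, exactly in the spirit of Eppstein. The only cosmetic differences are that you lower-bound the area of your region by summing the cell inequality $\area(P)\ge\frac{2\lambda}{\pi}\varphi_P$ over angles at the centers, whereas the paper counts the disk area $\pi$ contributed by each center interior to its region and divides by $\frac{\pi}{4\lambda}$, and that you solve a quadratic in $h$ instead of the paper's contrapositive formulation. These are equivalent.

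The genuine gap is your choice of region and the resulting reduction to a connected contact graph. The paper works with $A$, the complement of the outer face of $G_c(\mathcal{P})$ (the union of the filled-in components of the graph). For that region both key inequalities survive disconnectedness: the total boundary length of $A$ is still at most $2h$, and the isoperimetric inequality $\mathrm{per}(A)\ge\sqrt{4\pi\area(A)}$ holds for a disconnected region by superadditivity of the square root over the components. Your region $\conv(X)$ does not have this property: when $G_c(\mathcal{P})$ is disconnected, the bound $\mathrm{per}(\conv(X))\le 2h$ simply fails (two far-apart components have a convex hull of huge perimeter but few outer-face incidences), so you are forced into the componentwise reduction you sketch. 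That reduction does not work as stated: summing $h_i\ge 2\sqrt{\pi\lambda}\sqrt{n_i}-2\pi\lambda$ over $m$ components loses $2\pi\lambda m$, and since $2\sqrt{\pi\lambda}\sqrt{n_i}-2\pi\lambda<0$ already for $n_i\le 2$ (note $\pi\lambda>2$ in the range $\lambda>\frac{\sqrt3}{2}$), a packing split into $\Theta(n)$ small components turns your error term into $\Theta(n)$ rather than $O(1)$; the lemma is still true for such packings, but only because small components contribute proportionally many incidences for a different, combinatorial reason, which your argument never invokes. So either replace $\conv(X)$ by the complement of the outer face as in the paper (which also lets you drop the $b\le h$ step), or supply a separate counting argument for components below a fixed size. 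A secondary point to tighten in your Step 1: the type-2 cells of the refined Moln\'ar decomposition have vertices at circumcenters as well as at points of $X$, and near the boundary their bridges need not stay inside $\conv(X)$, so the identity $\area(\conv(X))=\sum_P\area(P)$ requires justification in the direction you use it.
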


\begin{proof} If the outer face of $G_c(\mathcal{P})$ has at least $2\sqrt{\pi\lambda}\sqrt{n}-O(1)$ vertex-face incidences, then we are done. So, assume that the outer face has less than $2\sqrt{\pi\lambda}\sqrt{n}$ vertex-face incidences. Then the perimeter of the outer face is less than $4\sqrt{\pi\lambda}\sqrt{n}$. Now, let $A$ be the complement of the outer face of $G_c(\mathcal{P})$ in $ \Eu^2$. Notice that if a unit disk of $\mathcal{P}$ has its center in the interior ${\rm int}(A)$ of $A$, then the open unit disk belongs to ${\rm int}(A)$. Furthermore, the cells of the refined Moln\'ar decomposition of the center points of the unit disks of $\mathcal{P}$ generate a decomposition of $A$. Next, Lemma~\ref{Bezdek-Langi} together with the property that the outer face has less than $2\sqrt{\pi\lambda}\sqrt{n}$ vertex-face incidences imply that

\begin{equation}\label{BL-inequality}
\area(A)\geq \frac{(n-2\sqrt{\pi\lambda}\sqrt{n})\pi}{\frac{\pi}{4\lambda}}=4\lambda(n-2\sqrt{\pi\lambda}\sqrt{n}).
\end{equation}

Finally, (\ref{BL-inequality}) and the isoperimetric inequality applied to $A$ yield for the perimeter ${\rm per}(A)$ of $A$ that
$$
{\rm per}(A)\geq \sqrt{4\pi \area(A)} \geq 2\sqrt{\pi}\sqrt{4\lambda}\sqrt{n-2\sqrt{\pi\lambda}\sqrt{n}}=4\sqrt{\pi\lambda}\sqrt{n-2\sqrt{\pi\lambda}\sqrt{n}}=4\sqrt{\pi\lambda}\sqrt{n}-O(1),
$$
finishing the proof of Lemma~\ref{eppstein-type}.

\end{proof}

Third, we need Lemma 5 of \cite{Ep} stated as follows.

\begin{lem}\label{edge-estimate}
Let $G$ be a triangle-free plane graph on $n$ vertices in which one face has $k$ vertex-face incidences. Then $G$ has at most $2n-\frac{k}{2}-2$ edges.
\end{lem}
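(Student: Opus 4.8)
The plan is to prove Lemma~\ref{edge-estimate} by the standard Euler-formula double counting argument. Write $m$ for the number of edges of $G$ and $f$ for the number of its faces. First I would reduce to the case that $G$ is connected: if $G$ has $c\geq 2$ components, Euler's formula reads $n-m+f=1+c$, and we will see that $c\geq 2$ only strengthens the final estimate, so the connected case $n-m+f=2$ is the essential one. I would also assume $G$ is simple (has neither loops nor multiple edges), which is automatic in the intended application to the contact graph of a disk packing, and dispose of the trivial cases $m\leq 1$ directly (for $m=1$ the unique face has boundary-walk length $2$, so $k\leq 2$ and $1\leq 2n-\tfrac{k}{2}-2$ since $n\geq 2$; for $m=0$ it is immediate).

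The combinatorial heart is a double count of edge--face incidences. Since each edge is traversed exactly twice in total by the boundary walks of the faces,
\[
\sum_{\phi}\ell(\phi)=2m,
\]
the sum running over all faces $\phi$, where $\ell(\phi)$ is the length of the boundary walk of $\phi$, equivalently the number of vertex--face incidences of $\phi$ counted with multiplicity along that walk. The point where triangle-freeness is used is the claim that, for a connected simple triangle-free plane graph with at least two edges, \emph{every} face satisfies $\ell(\phi)\geq 4$: a boundary walk of length $1$ would be a loop; one of length $3$ in a simple loopless graph is a $3$-cycle; and one of length $2$ is either a pair of parallel edges or forces the two endpoints of its single edge to have degree $1$, i.e. an isolated edge — all excluded. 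Applying this to every face except the distinguished one $\phi_0$, whose boundary walk has length exactly $k$ by hypothesis, gives $2m\geq k+4(f-1)$, that is, $f\leq \tfrac14(2m-k)+1$.

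Finally I would substitute the Euler relation $f=2-n+m$ into this inequality and solve for $m$:
\[
2-n+m\leq \tfrac14(2m-k)+1 \quad\Longrightarrow\quad 2m\leq 4n-4-k \quad\Longrightarrow\quad m\leq 2n-\tfrac{k}{2}-2,
\]
which is the assertion. For $c\geq 2$ the same computation with $f=1+c-n+m$ yields the even stronger $m\leq 2n-\tfrac{k}{2}-2c$. The bound is tight: for a tree we have $f=1$, so the displayed identity gives $k=2m$, Euler gives $m=n-1$, and hence $k=2n-2$ and $m=2n-\tfrac{k}{2}-2$.

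I do not expect a genuine obstacle here — this is an elementary Euler-characteristic count. The only point that needs care is the bookkeeping for plane graphs that are not $2$-connected, where faces are bounded by closed walks rather than by simple cycles: one must check that $\sum_\phi\ell(\phi)=2m$ still holds verbatim and that the bound $\ell(\phi)\geq 4$ on the non-distinguished faces survives the presence of bridges and cut vertices (it does, by the short case analysis above, once $G$ is simple). One should also be explicit that ``vertex--face incidences'' is to be read as the boundary-walk length of the face (vertices counted with multiplicity), since that is precisely the quantity that enters the use of this lemma in Lemma~\ref{eppstein-type}; if instead it denoted the number of distinct incident vertices, that number is at most the walk length and the inequality only becomes easier.
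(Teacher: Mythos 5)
Your argument is correct. Note that the paper itself gives no proof of this statement: it is quoted verbatim as Lemma~5 of Eppstein \cite{Ep} and used as a black box, so there is nothing internal to compare against. Your Euler-formula double count ($\sum_{\phi}\ell(\phi)=2m$, girth $\geq 4$ forcing $\ell(\phi)\geq 4$ on all faces but the distinguished one, then substituting $f=2-n+m$) is the standard proof of exactly this kind of edge bound and is what Eppstein's cited lemma rests on; your handling of the degenerate cases ($m\leq 1$, disconnectedness, and the reading of ``vertex--face incidences'' as boundary-walk length) is careful and does not affect the final inequality.
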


Finally, notice that if $\frac{\sqrt{3}}{2}<\lambda\leq 1$ and $\mathcal{P}$ is a $\lambda$-separable packing of $n>1$ unit disks in $ \Eu^2$, then $G_c(\mathcal{P})$ is triangle-free. Thus, Lemma~\ref{eppstein-type} proves the existence of a large face in $G_c(\mathcal{P})$, and plugging the size of this face as the variable $k$ in Lemma~\ref{edge-estimate} shows that $c_{\lambda}(n, B)\leq 2n -\sqrt{\pi\lambda}\sqrt{n}+O(1)$ holds for $n>1$.
This completes the proof of Theorem~\ref{contact-upper-bound}.

\section{Appendix: The hyperbolic Moln\'ar decomposition}

This Appendix introduces a new decomposition technique in $\HH^2$, whose Euclidean analogue has been discovered by Moln\'ar \cite{Molnar}. It is obtained from the Delaunay decomposition.  For preciseness, we recall some elementary properties of a decomposition \cite{GSh}. First, we call a family $\mathcal{F}$ of countably many mutually nonoverlapping closed sets whose union covers $\HH^2$ a \emph{decomposition} or \emph{tiling} of $\S^2$. The elements of this family are called \emph{tiles} or \emph{cells}. In case of a \emph{convex decomposition} the tiles are assumed to be convex, implying that they are convex polygons. A decomposition is called \emph{locally finite} if any point has a neighborhood intersecting only finitely many tiles, and, if the tiles are polygons, it is called \emph{edge-to-edge}, if any edge of a tile coincides with exactly one edge of another tile. If $\mathcal{F}$ is edge-to-edge, the edges and the vertices of the tiles are called the \emph{edges} and the \emph{vertices} of $\mathcal{F}$, respectively.

Consider a discrete, countable point system $X = \{ p_1, \ldots, p_k, \ldots \}$ in $\HH^2$. It is well known that then there is a unique decomposition of $\conv(X)$ into convex polygonal tiles with the following properties:
\begin{enumerate}
\item the tiling is locally finite and edge-to-edge;
\item the vertices of every tile are points of $X$;
\item the circumdisk of every tile contains no point of $X$ in its interior, and only the vertices of the tile on its boundary. 
\end{enumerate}
This convex, locally finite and edge-to-edge tiling is called the \emph{Delaunay decomposition}, or \emph{$D$-decomposition} defined by $X$.
Now, by means of Lemma~\ref{lem:hyperbolicMolnar} we define another locally finite, edge-to-edge decomposition, which we call \emph{hyperbolic Moln\'ar decomposition} in short, $M$-decomposition.

Let $F$ be a cell of the $D$-decomposition, and let us denote the circumdisk of $F$ by $C_F \subset \HH^2$, and the center of $C_F$ by $o_F$.
If $F$ does not contain $o_F$, then there is a unique side of $F$ that separates it from $F$ in $C_F$. We call this side a \emph{separating side} of $F$. If $[p_i, p_j ]$ is a separating side of $F$, then we call the polygonal curve $[p_i, o ] \cup [o_F, p_j]$ the \emph{bridge} associated to the separating side $[p_i, p_j ]$ of $F$ (cf. Figure~\ref{fig:molnar}).

\begin{figure}[ht]
\begin{center}
 \includegraphics[width=0.2\textwidth]{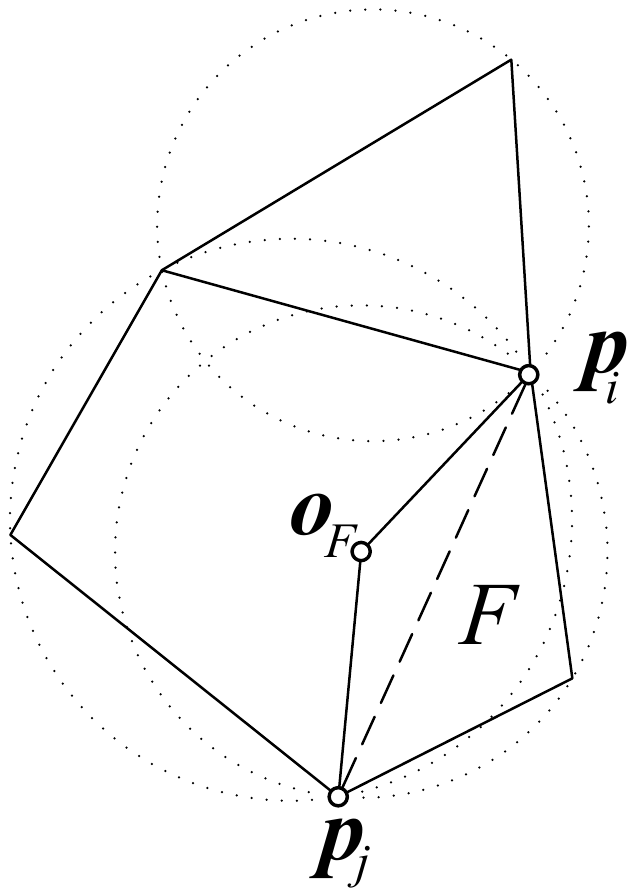}
 \caption{An illustration for the $M$-decomposition of a point system on $\HH^2$. In the figure, hyperbolic segments are represented with straight line segments; the edges of the cells, the circumcircles of the cells and the separating sides are denoted by solid, dotted and dashed lines, respectively.}
\label{fig:molnar}
\end{center}
\end{figure}

Our main lemma is the following.

\begin{lem}\label{lem:hyperbolicMolnar}
If we replace all separating sides of a $D$-decomposition by the corresponding bridges, we obtain a locally finite, edge-to-edge decomposition of $\HH^2$.
\end{lem}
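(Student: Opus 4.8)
The plan is to construct the $M$-decomposition explicitly from the $D$-decomposition and then verify the axioms of a locally finite, edge-to-edge tiling; this is the hyperbolic transcription of Moln\'ar's Euclidean construction \cite{Molnar}, so the real content is to check that each step uses only facts valid in $\HH^2$ (and, mutatis mutandis, in $\Sph^2$ and $\Eu^2$). Recall the relevant structure of the $D$-decomposition: it is convex, locally finite and edge-to-edge; every interior edge $e=[p_i,p_j]$ is shared by exactly two cells $F$ and $F'$; the circumdisk $C_F$ of a cell $F$ contains no point of $X$ in its interior and only the vertices of $F$ on its boundary; and the circumcentres $o_F,o_{F'}$ both lie on the perpendicular bisector $m_e$ of $e$. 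If $e$ is a separating side of $F$, then by definition $e$ separates $F$ from $o_F$ inside $C_F$, and since $o_F$ is the centre of $C_F$ with $p_i,p_j\in\partial C_F$, this means that $o_F$ lies strictly on the side of the geodesic $\aff(e)$ opposite to $F$, i.e.\ on the same side as $F'$.

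\emph{Key Lemma.} If $e=[p_i,p_j]$ is a separating side of $F$, then (a) $e$ is a separating side of at most one of the two cells $F,F'$ incident to it, and (b) $\conv\{p_i,p_j,o_F\}\subseteq F'$, with $o_F$ in the relative interior of $F'$. I would deduce (a) from the empty-circumdisk property: if $e$ were separating for $F'$ as well, then the circumcentres of $F$ and of $F'$ would each lie on the side of $\aff(e)$ opposite to the respective cell; rephrasing this in terms of the angles subtended by $e$ at the opposite vertices, and invoking the empty-circumdisk (locally Delaunay) inequality for $e$ — the appropriate "inscribed-angle'' inequality, which holds in $\HH^2$ — yields a contradiction. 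For (b) one uses $v'\notin\inter C_F$ (where $v'$ is the vertex of $F'$ off $e$) together with the fact that $o_F$ is the centre of $C_F$ to place $o_F$ inside both angular sectors of $F'$ at $p_i$ and at $p_j$. This is the step that really carries the geometry, and I expect it to be the main obstacle of the whole proof: it is precisely what guarantees that a bridge does not leave the two cells adjacent to its separating side, hence never collides with a third cell.

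Granting the Key Lemma, define for every cell $F$ of the $D$-decomposition
\[
P_F \;=\; \Bigl(F\cup\bigcup_{e\ \mathrm{sep.\ of\ }F}\conv\{p_i,p_j,o_F\}\Bigr)\ \setminus\ \bigcup_{e'\ \mathrm{sep.\ of\ the\ }F\text{-}\mathrm{neighbour}}\inter\conv\{p_k,p_l,o_{F'}\},
\]
where in the first union $e=[p_i,p_j]$, and in the second union $e'=[p_k,p_l]$ with $F'$ the cell on the other side of $e'$. By part (b) each triangle glued onto $F$ across a separating side lies in the neighbour of $F$ across that side, and, applying (b) to a neighbour, each triangle removed from $F$ lies in $F$ itself; hence $P_F$ is a well-defined closed (in general non-convex) polygon whose boundary is obtained from $\partial F$ by replacing every separating side of $F$ by its bridge $[p_i,o_F]\cup[o_F,p_j]$ and every side of $F$ that is a separating side of a neighbour by the broken segment $[p_k,o_{F'}]\cup[o_{F'},p_l]$. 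Thus $P_F$ is exactly the cell produced from $F$ by the rule in the statement.

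It remains to check that $\{P_F\}$ is a locally finite, edge-to-edge decomposition of $\conv(X)$ (which coincides with $\HH^2$ in the situations of interest). Since a triangle glued onto $F$ across $e$ lies inside the $F$-neighbour across $e$ and the $D$-cells have pairwise disjoint interiors, the $P_F$ have pairwise disjoint interiors; and since by (a) each edge $e$ is a separating side of at most one of its two cells, the triangle $\conv\{p_i,p_j,o_F\}$ glued onto $P_F$ is exactly the triangle removed from $P_{F'}$, so the glued and removed pieces cancel in pairs and $\bigcup_F P_F=\bigcup_F F=\conv(X)$. For the edge-to-edge property, the only new edges are the halves $[p_i,o_F]$ of a bridge, and by construction each of them is a full edge both of $P_F$ and of $P_{F'}$ (which recedes from $e$ along the same bridge), while exactly two cells and two edges meet at a new vertex $o_F\in\inter F'$. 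Finally, each $P_F$ is contained in the union of $F$ with its finitely many $D$-neighbours, so local finiteness of $\{P_F\}$ is inherited from that of the $D$-decomposition. The only places where the curvature enters are the perpendicular-bisector description of circumcentres and the empty-circumdisk inequality, both of which are available verbatim in $\HH^2$ and $\Sph^2$; hence the same argument proves the Euclidean and spherical versions as well.
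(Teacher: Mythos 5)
Your reduction to the ``Key Lemma'' is where the argument breaks: part (b) of that lemma, namely $\conv\{p_i,p_j,o_F\}\subseteq F'$ with $o_F\in\relint F'$, is false in general, and the failure of exactly this containment is what the paper's proof is designed to handle. The empty-circumdisk property only forces the far vertex $v'$ of $F'$ to lie outside $\inter C_F$; it does not confine $o_F$ to the angular sectors of $F'$ at $p_i$ and $p_j$. A Euclidean instance (the same phenomenon occurs in $\HH^2$): take $p_1=(0,0)$, $p_2=(1,0)$, $p_3=(0.5,-0.01)$, $v'=(100,25)$, with $F=\conv\{p_1,p_2,p_3\}$ and $F'=\conv\{p_1,p_2,v'\}$. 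Then $o_F\approx(0.5,\,12.5)$ and $o_{F'}\approx(0.5,\,210.5)$; one checks $v'\notin\inter C_F$ and $p_3\notin\inter C_{F'}$, so this is a legitimate local Delaunay configuration and $[p_1,p_2]$ is a separating side of $F$, yet $o_F$ lies far outside the thin triangle $F'$ (it is above the line $p_1v'$, while $F'$ lies below it). Consequently the triangle you glue onto $P_F$ across $[p_1,p_2]$ protrudes into cells beyond $F'$, the ``glued and removed pieces cancel in pairs'' step fails (the protruding part is never removed from the third cell, whose defining formula does not involve the edge $[p_1,p_2]$), your cells overlap, and the claim that $[p_i,o_F]$ is a full common edge of $P_F$ and $P_{F'}$ with $o_F$ an interior vertex of $F'$ no longer follows.

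The paper's proof takes this into account. It reduces the lemma to two incidence statements --- bridges meet only at their endpoints, and a bridge meets a retained side only at its endpoints --- and proves the second by an induction along the finitely many $D$-cells met by the triangle $\conv\{p_i,p_j,o_F\}$: whenever the bridge of $F$ leaves $F'$, it must do so through a separating side of $F'$ (which is itself removed and replaced by a longer bridge, using the monotonicity $o_F\in[o_{F'},m]$ of circumcenters along the bisector), so the argument can be iterated until it terminates. Your part (a) of the Key Lemma (each edge is separating for at most one of its two cells) is correct and is implicit in the paper's observation about $o_F\in[o_{F'},m]$, but to repair your argument you would have to replace part (b) by an iteration of this kind rather than a one-step containment.
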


\begin{proof}
As in \cite{Molnar}, the proof is based on showing the following two statements.
\begin{itemize}
\item[(a)] Bridges may intersect only at their endpoints.
\item[(b)] A bridge may intersect a side in the $M$-decomposition only at its endpoints.
\end{itemize}

To show (a), recall that for any cell $F$, the points of $X$ closest to $o_F$ are exactly the vertices of $F$. Thus, if $[o_F, p_i]$ and $[o_{F'}, p_j]$ are components of bridges with $p_i \neq p_j$ and $o_F \neq o_{F'}$, then the bisector of the segment $[ p_i, p_j]$ separates $[o_F, p_i]$ and $[o_{F'}, p_j]$, which, since $o_F \neq o_{F'}$, yields that the components are disjoint.

Now, we show (b), and let $[ p_i, p_j]$ be a separating side of the cell $F$. First, observe that by the definition of separating side, the triangle $T$ with vertices $p_i$, $p_j$ and $o_F$ intersects finitely many cells of the $D$-decomposition at a point different from $[ p_i, p_j]$, each of which is different from $F$. Let $F'$ be the cell adjacent to $F$ and having $[ p_i, p_j]$ as a side.
Note that both $o_F$ and $o_{F'}$ lie on the bisector of $[ p_i, p_j]$ such that if $m$ is the midpoint of this arc, then $o_F \in [o_{F'}, m ]$, implying that the radius of $C_{F'}$ is strictly greater than that of $C_F$. If, apart from $[ p_i, p_j]$, $F'$ contains $T$ in its interior, then (b) clearly holds for the bridge associated to $[p_i, p_j]$. On the other hand, if $F'$ does not contain $T$ in its interior apart from $[ p_i, p_j]$, then there is a separating side $[p_{i'},p_{j'}]$ of $F'$, removed during the construction, and this side is the unique side containing any point of the bridge apart from $p_i$ and $p_j$. Thus, no side of $F'$ in the $M$-decomposition contains an interior point of the bridge. Furthermore, observe that if any side in the $M$-decomposition contains an interior point of $[p_i,o_F] \cup [o_F, p_j]$, then it also contains an interior point of $[p_{i'}, o_{F'}] \cup [o_{F'}, p_{j'}]$. Consequently, we may repeat the argument with $F'$ playing the role of $F$, and since only finitely many cells may intersect $T$ at a point different from $[ p_i, p_j]$, and each is different from $F$, we conclude that $[ p_i, o_F] \cup [o_F, p_j]$ may intersect any side of the $M$-decomposition only at $p_i$ or $p_j$.
\end{proof}

\bigskip


\noindent K\'aroly Bezdek \\
\small{Department of Mathematics and Statistics, University of Calgary, Canada}\\
\small{Department of Mathematics, University of Pannonia, Veszpr\'em, Hungary\\
\small{E-mail: \texttt{kbezdek@ucalgary.ca}}

\bigskip

\noindent and

\bigskip

\noindent Zsolt L\'angi \\
\small{MTA-BME Morphodynamics Research Group and Department of Geometry}\\ 
\small{Budapest University of Technology and Economics, Budapest, Hungary}\\
\small{\texttt{zlangi@math.bme.hu}}

\end{document}